\providecommand{\claimname}{Claim}
\providecommand{\definitionname}{Definition}
\providecommand{\lemmaname}{Lemma}
\providecommand{\propositionname}{Proposition}
\providecommand{\remarkname}{Remark}
\providecommand{\theoremname}{Theorem}
\providecommand{\corollaryname}{Corollary}
\providecommand{\examplename}{Example}
\providecommand{\motivationname}{Motivation}
\theoremstyle{plain}
\newtheorem{thm}{\protect\theoremname}
\theoremstyle{definition}
\newtheorem{defn}[thm]{\protect\definitionname}
\theoremstyle{plain}
\newtheorem*{lem*}{\protect\lemmaname}
\theoremstyle{remark}
\newtheorem*{mot}{\protect\motivationname}
\theoremstyle{plain}
\newtheorem{lem}[thm]{\protect\lemmaname}
\theoremstyle{plain}
\newtheorem{prop}[thm]{\protect\propositionname}
\theoremstyle{remark}
\newtheorem{rem}[thm]{\protect\remarkname}
\theoremstyle{remark}
\newtheorem*{claim*}{\protect\claimname}
\theoremstyle{plain}
\newtheorem{cor}[thm]{\protect\corollaryname}
\newtheorem{example}{\protect\examplename}
\newcommand{\K}{\mathbbm{k} }
\crefname{property}{property}{properties}
\Crefname{property}{Property}{Properties}
\begin{document}
\title{The Partition Complex: an invitation to combinatorial commutative algebra}
\author{Karim Adiprasito and Geva Yashfe}

\begin{abstract}
	We provide a new foundation for combinatorial commutative algebra and Stanley-Reisner theory using the partition complex introduced in \cite{AHL}. One of the main advantages is that it is entirely self-contained, using only a minimal knowledge of algebra and topology. On the other hand, we also develop new techniques and results using this approach.
	In particular, we provide
	\begin{compactenum}[1.]
		\item A novel, self-contained method of establishing Reisner's theorem and Schenzel's formula for Buchsbaum complexes.
		\item A simple new way to establish Poincar\'e duality for face rings of manifolds, in much greater generality and precision than previous treatments.
		\item A "master-theorem" to generalize several previous results concerning the Lefschetz theorem on subdivisions.
		\item Proof for a conjecture of K\"uhnel concerning triangulated manifolds with boundary.	
	\end{compactenum}
\end{abstract}
	
\maketitle

\section{Introduction}

Starting with the work of Hochster, Reisner and Stanley, powerful methods from commutative algebra developed by algebraic geometers could be used to provide a new and powerful way to study face numbers of simplicial and polyhedral complexes \cite{Hochster, Stanley96}.

However, using these powerful tools came with a drawback. First, they made the theory harder to access without background in commutative algebra. Second, even many of those applying them often used them as a black box, and the tools themselves became a distraction, leading to missed results and open questions that would otherwise have been simple.

And so, as a tourist might use an expensive lens to capture a vista, doing so suboptimally because he does not grasp its pros and cons, the physics of its makeup, we are left with pictures that feel somewhat lacking, blurry or hiding the important, leaving us dissatisfied. 

So our goal here is twofold: To show how basic household means can take a much simpler, more gratifying picture, without sacrificing any of the generality. We then go a step further, and use the new methods to generalize the results with ease, using only the ingredients that can be found within the first algebra books you can find in your kitchen, and just a smidge of algebraic topology you find in every spice rack. As for combinatorics, we shall assume nothing beyond the most basic familiarity with simplicial complexes.

Hence, this is not so much a survey, as it is an attempt to build better and more powerful foundations, as well as offer newcomers a road towards research in the area, that is at the heart of new developments between combinatorics and Hodge Theory \cite{MN, Karu, AHK, AHL}. Additionally, we offer also researchers in combinatorial commutative algebra a more consistent and stronger set of tools. We are therefore a little curt on direct combinatorial applications, for which we refer to the initial sections of \cite{AHL}, and instead offer an focused introduction to the techniques.

\subsection{Overview}

Before we begin discussing the details, let us provide a little motivation. We want to understand various combinatorial invariants of simplicial complexes. Most basic among these is the face vector, counting the number of vertices, edges, and so on. We may wish to restrict the class of complexes under investigation: for example, to look only at planar graphs, or at simplicial complexes that triangulate a surface. The restrictions we place are usually homological in nature.

The issue is then how the combinatorics and topology come together. The trick is to use rings which contain information from both worlds.

Indeed, one of the key observations of combinatorial commutative algebra was the realization that the homological properties of a simplicial complex are encoded in its so called \emph{face ring} in a variety of ways, often first glimpsed and disseminated as unpublished ideas and results of Hochster\footnote{This seems to justify the old adage that discoveries are never named after their discoverer, for the other name of face rings, Stanley-Reisner rings, makes no mention of Hochster}. The first key result here is Reisner's theorem (discussed in \Cref{sec:Reisner}), that connects the vanishing of homology over a fixed field to the Cohen-Macaulay property of the associated face ring. Here, not only the global homology of the simplicial complex comes into play, but also the homology of principal filters in the face lattice.

The essentially only proof available for this theorem goes via the local cohomology as introduced by Grothendieck in the 1960s, and most of the following research has similarly employed the same tool. We instead use the partition complex here, a significantly more down-to-earth tool that has several direct benefits, most of all that one can see what happens in a surgical way.

We also obtain the generalization to manifolds, due to Schenzel \cite{Schenzel81}, which is relatively transparent at least to experts, but has the drawback that it is, in parts, only available in his German thesis. We provide this in \Cref{sec:Schenzel}.

Our next stop in the way is a new way to address and understand a fundamental property of intersection rings that arises in the context of combinatorial Hodge theory: Poincar\'e duality. Again we offer a new transparent proof of Poincar\'e duality for the face rings of spheres, and then proceed to provide generalizations to arbitrary manifolds, discussed in \Cref{sec:Poincare}.

Finally, we discuss some new applications to face number problems for manifolds. In \Cref{sec:subdivisions}, we discuss the connection to subdivisions and Lefschetz properties, and provide a far-reaching subdivision theorem, providing a common generalization of previous works in one swoop. We also discuss related conjectures of K\"uhnel, concerning small triangulations of manifolds.

\textbf{Acknowledgements} We would like to thank Zuzka Pat\'akov\'a and Hailun Zheng for an attentive reading of our paper, helping us correct many typos and provide useful remarks to improve understanding. We also thank the anonymous referee for useful remarks.
Karim Adiprasito is supported by the European Research Council under the European Unions Seventh Framework Programme ERC Grant agreement ERC StG 716424 - CASe, a DFF grant 0135-00258B and the Israel Science Foundation under ISF Grant 1050/16.
Geva Yashfe is supported by the European Research Council under the European Unions Seventh Framework Programme ERC Grant agreement ERC StG 716424 - CASe and the Israel Science Foundation under ISF Grant 1050/16.

\section{Preliminaries}
In this section we set up some basic notation and definitions. Experienced readers can skip most of the text, but may still wish to look at the notation and at definitions for relative simplicial complexes, as well as the corresponding modules over face rings.

\subsection{Simplicial complexes and face rings}
\subsubsection{Simplicial complexes} We begin by recalling some common definitions.
\begin{defn}
	A simplicial complex $\Delta$ is a downwards-closed family of subsets of a finite set called the ground set. The ground set is usually left implicit or taken to be $[n]=\{1,\ldots,n\}$ for some $n$. Being downwards-closed means that if $\tau\in\Delta$ and $\rho\subset\tau$ then $\rho\in\Delta$.
	
	In particular, if a simplicial complex is nonempty, it contains $\emptyset$ as a face. Thus the complex $\{\emptyset\}$ contains no nonempty faces, but is different than the void complex $\emptyset$.
	
	A subcomplex of a simplicial complex is a subset which is itself a simplicial complex.
	
	An element $\tau\in\Delta$ is called a simplex or a face. Its dimension is $\dim(\tau)=|\tau|-1$, and the dimension of $\Delta$ is $\max_{\tau\in\Delta}\dim(\tau)$. A face of $\Delta$ is called a facet if its dimension equals $\dim(\Delta)$. Faces of dimension zero and one are called vertices and edges respectively.
\end{defn}

\begin{defn}
	Let $\Delta$ be a simplicial complex. The \emph{star} of a simplex $\tau$ is the simplicial complex $\mathrm{st}_\tau(\Delta) = \{\rho\in\Delta \mid \tau\cup\rho\in\Delta\}$. The \emph{link} of $\tau$ is $\mathrm{lk}_\tau(\Delta)=\{\rho\in\Delta \mid \tau\cup\rho\in\Delta, \tau\cap\rho=\emptyset\}$.
	
	The $k$-faces of $\Delta$ are denoted by $\Delta^{(k)} = \{\tau\in\Delta \mid \dim(\tau)=k\}$, and the $k$-skeleton $\Delta^{(\le k)}$ is the subcomplex consisting of faces of dimension at most $k$.
\end{defn}

In one or two places we use the join and subtraction operations. For simplicial complexes $\Delta_1,\Delta_2$ on disjoint ground sets, the join is $\Delta_1^\ast\Delta_2=\{\tau\cup\rho \mid \tau\in\Delta_1, \rho\in\Delta_2\}$, a simplicial complex on the union of the ground sets of $\Delta_1$ and $\Delta_2$.

If $\Delta$ is a simplicial complex and $\tau$ is a face, $\Delta-\tau$ is the maximal subcomplex which does not contain $\tau$. Its faces are $\{\sigma\in\Delta\mid\sigma\cap\tau=\emptyset\}$.

It is worth noting that simplicial complexes are not equivalent to semi-simplicial sets (sometimes called $\Delta$-complexes by Hatcher). 

\subsubsection{Relative simplicial complexes}
We work with relative simplicial complexes analogously to how one often works with pairs of topological spaces. The theory generalizes smoothly to this setting, which is sometimes cleaner. See also \cite{Adiprasitorelative,AHL}.

\begin{defn}
	A relative simplicial complex $\Psi=(\Delta,\Gamma)$ is a pair consisting of a simplicial complex $\Delta$ and a subcomplex $\Gamma$. Its faces are $\Delta\setminus\Gamma$, i.e. the non-faces of $\Gamma$. In particular, $\dim(\Psi)=\max_{\tau\in\Psi}\dim(\tau)$ can be smaller than $\dim(\Delta)$, and it is possible for $\emptyset$ not to be a face. Any simplicial complex $\Delta$ can be treated in this language as the relative complex $(\Delta, \emptyset)$.
	
	The star of a simplex $\tau$ within $\Psi$ is $\mathrm{st}_\tau \Psi = (\mathrm{st}_\tau \Delta, \mathrm{st}_\tau \Gamma)$. Similarly, the link is $\mathrm{lk}_\tau \Psi = (\mathrm{lk}_\tau \Delta, \mathrm{lk}_\tau \Gamma)$.
	
	A relative complex $\Psi$ is pure if all its maximal faces have the same dimension.
\end{defn}

Many basic lemmas about simplicial complexes work for relative complexes as well, and we will often extend definitions from absolute to relative without further mention. For instance, if $\Psi=(\Delta,\Gamma)$ and $\tau\in\Delta$ then $\mathrm{st}_\tau \Psi=\tau^\ast\mathrm{lk}_\tau \Psi=(\tau^\ast\mathrm{lk}_\tau\Delta, \tau^\ast\mathrm{lk}_\tau\Gamma)$. Note that the join of any complex with the void complex is void.

The open star of a face $\tau$ in a simplicial complex is usually defined to be the set of faces containing $\tau$. This is not a subcomplex in the usual sense, but we can define a relative complex to fill the same role.

\begin{defn}
	Let $\Delta$ be a simplicial complex. The open star of a face $\tau$ is $\mathrm{st}_\tau^\circ\Delta = (\mathrm{st}_\tau\Delta, \mathrm{st}_\tau\Delta - \tau).$
\end{defn}

\subsubsection{Homology of complexes}
The cohomology $H^\ast(\Psi;\K )$ of a relative complex $\Psi=(\Delta,\Gamma)$ is the simplicial cohomology of the pair with coefficients in $\K$. For a complex $\Delta$, we consider $\emptyset\in\Delta$ as a face (of dimension $-1$) for this purpose. Thus our $H^\ast(\Delta)$ is what is often denoted $\tilde{H}^\ast(\Delta)$. In particular the void complex $\emptyset$ has vanishing cohomology in all dimensions, but $\Delta=\{\emptyset\}$ has 
\[H^i(\Delta;\K )=
\begin{cases}
\K & i=-1 \\
0 & \mathrm{otherwise.}
\end{cases}
\]

\subsubsection{Face rings}
Face rings, or Stanley-Reisner rings, are main object of the paper. Our treatment is standard except for the relative case, in which we follow \cite{Adiprasitorelative} and \cite{AHL}.

Fix a field $\K $. Except in \Cref{sec:CM}, $\K $ is assumed to be infinite. This is a harmless assumption, as field extensions change no property that interests us in this context.

\begin{defn}
	Let $\Delta$ be a simplicial complex. Define the polynomial ring $\K [x_v \mid v\in \Delta^{(0)}]$, with variables indexed by vertices of $\Delta$. The Stanley-Reisner ideal (or non-face ideal) $I_\Delta$ of $\Delta$ is the ideal generated by all elements of the form $x_{v_1}\cdot x_{v_2} \cdot\ldots\cdot x_{v_j}$ where $\{v_1,\ldots,v_j\}$ is not a face of $\Delta$.
	
	The Stanley-Reisner ring (or face ring) of $\Delta$ is
	\[\K [\Delta] \coloneqq \K [x_v \mid v\in \Delta^{(0)}] / I_\Delta.\]
	
	If $\Psi=(\Delta,\Gamma)$ is a relative complex, the \emph{relative face module} of $\Psi$ is defined by $I_\Gamma / I_\Delta$. This is an ideal of $\K [\Delta]$.
\end{defn}

Two main types of maps between face rings and modules are used in this paper. If $\Psi=(\Delta,\Gamma)$ and $\Psi'=(\Delta,\Gamma')$ are relative complexes such that $\Gamma' \subset \Gamma$, there is an inclusion map
\[\K[\Psi] \hookrightarrow \K[\Psi'].\]
Similarly, if $\Psi=(\Delta,\Gamma)$ and $\Psi'=(\Delta',\Gamma)$ such that $\Delta' \subset \Delta$ is a subcomplex, there is a restriction map
\[\K[\Psi] \twoheadrightarrow \K[\Psi'].\]
In general, maps do not exist in the opposite direction. Two particularly relevant examples are the inclusion of an open star into a complex and the restriction to the star of a face. Explicitly, for $\Psi=(\Delta,\Gamma)$ and any $\tau \in \Delta$, these are maps
\[\K[\mathrm{st}^\circ_\tau \Psi] \simeq \K[\Delta,\mathrm{st}_\tau\Gamma\cup(\Delta-\tau)] \rightarrow \K[\Psi] \]
and
\[\K[\Psi] \rightarrow \K[\mathrm{st}_\tau \Psi]\]
respectively.

\subsubsection{Gradings of face rings}
Face rings can be \emph{graded} by monomial degree. That is, if $\Delta$ is a complex, we can write
\[\K[\Delta] = \bigoplus_{n\ge 0}\K[\Delta]_n,\]
where the direct sum is a sum of vector spaces over $\K$, and $\K[\Delta]_n$ is the subspace spanned by monomials of degree $n$. This is called the coarse grading. An element of $\K[\Delta]$ is homogeneous if it is in a single graded piece, or in other words, if it is a linear combination of monomials having the same degree.

There is also a finer grading, by the exponent vectors of monomials. If $\Delta$ has vertices $\{v_1,\ldots,v_k\}$ and $x^\alpha = x_{v_1}^{\alpha_1} \cdot \ldots\cdot x_{v_k}^{\alpha_k}$ is a monomial, its exponent vector is $(\alpha_1,\ldots,\alpha_k) \in \mathbb{Z}_{\ge 0}^{\Delta^{(0)}}$. The piece of $\K[\Delta]$ in degree $\alpha$ is the span of this monomial. This is the fine grading.

Note that in both cases, the degree of a product of homogeneous elements is the sum of their degrees.

Given a graded module or algebra, one can encode the dimensions of the graded pieces in a generating function. This is called the \emph{Hilbert series}, and we shall focus mostly on the Hilbert series of a face ring with respect to the coarse grading
\[H(\K[\Delta])(t)\  = \sum_{i=0}^\infty \dim_{\K }((\K_i[\Delta]))\cdot t^i.
\]
\begin{mot}
This interests us because the Hilbert series of a simplicial complex is also combinatorial:
\[H(\K[\Delta])(t)\ =\ \frac{1}{(1-t)^n}\sum_{i=0}^d f_{i-1} t^i(1-t)^{n-i}.\]
\end{mot}

The same discussion applies verbatim to relative face modules. The relevance of this is that maps between modules often preserve the degree. In this case, we can often understand a complex of maps most easily by examining each graded piece separately.

\subsection{Chain complexes}
We discuss some definitions and basic lemmas for chain and double complexes, and provide a basic introduction. If you have not seen chain complexes before, we recommend Hatcher for a basic introduction \cite{Hatcher}.

\begin{defn}[Chain complexes and tensor products]
	All our complexes are cohomologically graded. That is, our chain complexes are denoted $C^\ast$, with differential $C^\ast\rightarrow C^{\ast+1}$. It is convenient to call $H^i(C^\ast)$ the $i$-th homology, rather than cohomology, of $C^\ast$. 
	
	To shift the index by $p$, we write $C^{\ast+p}$ (and $(C^{\ast+p})^i=C^{i+p}$).
	
	If $(B^\ast,d)$ and $(C^\ast,d')$ are chain complexes, their tensor product is the double complex $T^{\ast,\ast}$ defined by 
	\[T^{i,j} = B^i \otimes C^j \]
	together with maps $d^h = d\otimes\mathrm{id}:T^{i,j}\rightarrow T^{i+1,j}$ and $d^v = \mathrm{id}\otimes d':T^{i,j}\rightarrow T^{i,j+1}$. If $B^\ast,C^\ast$ are complexes of modules over some ring $R$, the tensor product is of $R$ modules, i.e. it is $B^i\otimes_R C^j$.
	Note the convention here is that the squares of the complex commute. 
	
	A small piece of $T^{\ast,\ast}$ can be pictured as follows.
	
	\[
	\xymatrix{ & \ldots & \ldots\\
		\ldots\ar[r]^{d^{h}} & B^{i}\otimes C^{j+1}\ar[r]^{d^{h}}\ar[u]^{d^{v}} & B^{i+1}\otimes C^{j+1}\ar[r]^{\quad d^{h}}\ar[u]^{d^{v}} & \ldots\\
		\ldots\ar[r]^{d^{h}} & B^{i}\otimes C^{j}\ar[r]^{d^{h}}\ar[u]^{d^{v}} & B^{i+1}\otimes C^{j}\ar[r]^{\quad d^{h}}\ar[u]^{d^{v}} & \ldots\\
		& \ldots\ar[u]^{d^{v}} & \ldots\ar[u]^{d^{v}} & \ldots
	}
	\]
\end{defn}
\subsection{Double complexes}
The main proofs of the paper are established using the homology of double complexes, the homological way to perform what combinatorialists know well as double counting. To do this in a manner as accessible as possible, without leaving too much for the reader, we use mapping cones very extensively.

 Everything we need is introduced below.

We begin with some notation.
\begin{defn}
	Let $C^{\ast,\ast}$ be a double complex with commuting differentials $d^h$ and $d^v$ (our double complexes always have commuting differentials). Each row $C^{\ast,j}$ and each column $C^{i,\ast}$ is a chain complex with differential induced from $d^h$ or $d^v$ respectively.
	
	The \emph{total complex} of $C$ is a chain complex given by $\mathrm{Tot}(C)^k = \bigoplus_{i+j=k}C^{i,j}$ and differential $d_\mathrm{Tot}^k : \mathrm{Tot}(C)^k \rightarrow \mathrm{Tot}(C)^{k+1}$ defined by either $d^h + (-1)^k d^v$ or $d^v + (-1)^k d^h$. These give equivalent homology, and it is convenient to have both (an alternative is to transpose the complex, but both are used for the same double complex here).
	
	We denote elements $\alpha \in \mathrm{Tot}(C)^k$ by sums $\alpha = \sum_{i+j=k}\alpha^{i,j}$, where it is understood that $\alpha^{i,j}\in C^{i,j}$.
	
	The \emph{truncation} $C^{\ast\ge i_0, \ast}$ is a double complex defined by
	\[ 
	(C^{\ast\ge i_0, \ast})^{i,j} = C^{i\ge i_0, j} = 
	\begin{cases}
		C^{i,j} & i\ge i_0 \\
		0 		& \mathrm{otherwise,}
	\end{cases}
	 \]
	 with the same differentials as $C^{\ast,\ast}$, and $0$ for $i<i_0$. 
	 
	 The truncation $C^{\ast,\ast\le j_1}$ is defined analogously.
\end{defn}

Our goal for the rest of this section is to produce exact sequences tying together the rows, columns, and total complex of a double complex. We do this using mapping cones. The idea is introduced after a little preparation.

\begin{lem}\label[lemma]{lem:exactness}
	Let $C^{\ast,\ast}$ be a bounded double complex. For $H^k(\mathrm{Tot}(C))$ to vanish, it suffices that the homology in the vertical direction of $C^{i,k-i}$ is zero for each $i$, i.e. that $H^{k-i}(C^{i,\ast})=0$ for all $i$. Similarly, it suffices that $H^{k-i}(C^{\ast,i})=0$ for all $i$.
\end{lem}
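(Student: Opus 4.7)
The plan is to give a direct proof by peeling off one column at a time from a total cycle, using boundedness to ensure that the process terminates. I will treat the vertical case; the horizontal case is symmetric (swap the roles of $d^h$ and $d^v$, and use the other sign convention for $d_{\mathrm{Tot}}$).

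Suppose $\alpha = \sum_{i+j=k} \alpha^{i,j}$ is a cycle in $\mathrm{Tot}(C)^k$, and let $i_0$ be the smallest $i$ with $\alpha^{i,k-i} \neq 0$, which exists because $C$ is bounded. Reading off the component of $d_{\mathrm{Tot}} \alpha = 0$ in bidegree $(i_0, k-i_0+1)$ yields
\[
\pm d^v \alpha^{i_0, k-i_0} + d^h \alpha^{i_0-1, k-i_0+1} = 0,
\]
and the second term vanishes by minimality of $i_0$. Hence $\alpha^{i_0, k-i_0}$ is a vertical cycle in the column $C^{i_0, \ast}$ at height $k-i_0$, and by the hypothesis $H^{k-i_0}(C^{i_0, \ast}) = 0$ we can write $\alpha^{i_0, k-i_0} = \pm d^v \gamma$ for some $\gamma \in C^{i_0, k-i_0-1}$. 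Replace $\alpha$ by $\alpha' = \alpha - d_{\mathrm{Tot}} \gamma$: this is cohomologous to $\alpha$, the $(i_0, k-i_0)$-component has been killed, and the only other bidegree that changes is $(i_0+1, k-i_0-1)$. Thus the smallest column index with nonzero contribution has strictly increased.

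Because $C$ is bounded in both directions, iterating this step finitely many times yields $\alpha^{(N)} = 0$, and telescoping the corrections $\gamma$ produced along the way exhibits the original $\alpha$ as a total boundary; therefore $H^k(\mathrm{Tot}(C)) = 0$. The only bookkeeping concern — the closest thing to a real obstacle — is keeping track of the signs introduced by $d_{\mathrm{Tot}} = d^h + (-1)^{\bullet} d^v$, but this is routine. Conceptually, the proof rests entirely on the observation that minimality of $i_0$ eliminates the horizontal contribution at position $(i_0, k-i_0+1)$, promoting the total cycle condition to a plain vertical cycle condition that we can then resolve by hypothesis.
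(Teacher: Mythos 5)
Your proof is correct and follows essentially the same argument as the paper: isolate the minimal column index $i_0$, observe that the total-cycle condition collapses to a vertical-cycle condition at $(i_0,k-i_0)$ by minimality, lift via the exactness hypothesis, and subtract a total boundary to strictly increase $i_0$, iterating until termination by boundedness. Your version just spells out the bidegree bookkeeping that the paper leaves implicit.
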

\begin{proof}
	We show this for the vertical case, the horizontal one being analogous.
	Let $\sum_{i+j=k}\alpha^{i,j} \in \mathrm{Tot}(C)^k$ be a cycle, and let $i_0$ be the minimial index such that $\alpha^{i_0,k-i_0}\neq 0$. Then $d^v(\alpha^{i_0,k-i_0})=0$, so by assumption there is some $\beta=\beta^{i_0,k-i_0-1}$ mapping to $\alpha^{i_0,k_0}$ under $d^v$.
	\[\xymatrix{\ddots\ar[r] & \hole\\
		& 0\ar[r]\ar[u] & 0\\
		&  & \alpha^{i_{0},k-i_{0}}\ar[r]\ar[u] & 0\\
		&  & \beta\ar[r]\ar[u] & \alpha^{i_{0}+1,k-i_{0}-1}\ar[r]\ar[u] & 0\\
		&  &  &  & \ddots\ar[u]
	}
	\]
	Thus $\alpha' = \alpha - ((-1)^{k-1}d^h +d^v)(\beta)$ differs from $\alpha$ by a boundary. Replacing $\alpha$ by $\alpha'$ increases the minimal nonvanishing index $i_0$, and after finitely many steps the process terminates because $C^{\ast,\ast}$ is bounded.
\end{proof}

\begin{cor}
	If all rows or all columns of a double complex are exact then the total complex is acyclic.
\end{cor}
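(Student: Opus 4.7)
The plan is essentially to invoke \Cref{lem:exactness} pointwise in $k$. If every column $C^{i,\ast}$ is exact, then by definition $H^j(C^{i,\ast})=0$ for every $i$ and every $j$. In particular, for any fixed $k$, we have $H^{k-i}(C^{i,\ast})=0$ for all $i$, so the hypothesis of \Cref{lem:exactness} (vertical case) is met, giving $H^k(\mathrm{Tot}(C))=0$. Since this holds for all $k$, $\mathrm{Tot}(C)$ is acyclic.

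The case of exact rows is identical, using the horizontal version of \Cref{lem:exactness}: exactness of each row $C^{\ast,j}$ means $H^j(C^{\ast,i})=0$ for all $i,j$, so in particular $H^{k-i}(C^{\ast,i})=0$ for every $k,i$, again yielding $H^k(\mathrm{Tot}(C))=0$.

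There is no real obstacle here; the statement is a direct specialization of the lemma from "one degree at a time" to "all degrees at once," obtained simply by letting $k$ range over all integers. The only thing to remark is that the double complex is assumed bounded (as in the lemma), so the termination argument in the proof of \Cref{lem:exactness} applies unchanged.
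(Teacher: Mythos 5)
Your proof is correct and is precisely the intended argument; the paper states this as an immediate corollary of \Cref{lem:exactness} with no separate proof, and specializing the lemma degree by degree exactly as you do is the only step needed.
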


We introduce maps $\mathfrak{R,U}$ (for ``right" and ``up") between columns (respectively rows) of a double complex and the total complexes of certain truncations.

\begin{defn}
	Let $C^{\ast,\ast}$ be a double complex. There is a chain map
	\[\mathfrak{R}^i:C^{i,\ast} \rightarrow \mathrm{Tot}(C^{\ast\ge i+1, \ast})^{\ast+i+1},\]
	from the $i$-th column to the total complex of a truncation of $C^{\ast,\ast}$,
	which is given by
	\[C^{i,j} \rightarrow \mathrm{Tot}(C^{\ast\ge i+1, \ast})^{i+j+1}\]
	\[\alpha \mapsto d^h(\alpha)\in C^{i+1,j} \subset \bigoplus_{\substack{r+s=i+j+1, \\ r\ge i+1}}C^{r,s}.\]
	For the signs make $\mathfrak{R}$ commute with the differentials, the differential of the total complex is taken to be $d^v + (-1)^k d^h$. 
	
	This is illustrated below, with summands of $\mathrm{Tot}(C^{\ast\ge i+1, \ast})^{i+j+1}$ underlined.
	\[\xymatrix{\hole\\
		C^{i,j+1}\ar[u] & \hole\\
		C^{i,j}\ar[u]\ar[r] & {\underline{C^{i+1,j}}}\ar[r]\ar[u] & \hole\\
		C^{i,j-1}\ar[u] &  & {\underline{C^{i+2,j-1}}}\ar[r]\ar[u] & \hole\\
		\vdots &  &  & {\color{red}{\color{blue}{\ddots}}}
	}
	\]
	
	There is a similar chain map
	\[\mathfrak{U}^j:\mathrm{Tot}(C^{\ast,\ast\le j})^\ast\rightarrow C^{\ast-j,j+1},\]
	from the total complex of a truncation of $C^{\ast,\ast}$ to the $j+1$-th row. On an element
	\[\alpha = \sum_{r+s=k}\alpha^{r,s}\in \mathrm{Tot}(C^{\ast,\ast\le j})^k\]
	we define it by 
	\[\mathfrak{U}^j(\alpha) = d^v(\alpha^{k-j,j}).\] 
	Here the differential of the total complex should be taken to be $d^h +(-1)^k d^v$.
\end{defn}

\begin{defn}[Mapping cones]
	Let $f:(C^\ast,\partial)\rightarrow (C'^\ast,\partial')$ be a map of chain complexes. The mapping cone of $f$ is the chain complex $(M(f)^\ast,d)$, where $M(f)^i = C^i\oplus C'^{i-1}$ and $d^i(\alpha,\beta)=(\partial\alpha, \partial'\beta + (-1)^i f\alpha)$.
	
	Given $f$, we can construct a map of chain complexes $\iota:C'^{\ast-1}\rightarrow M(f)^\ast$ by $\beta\mapsto (0,\beta)$. This fits into a short exact sequence
	\[0 \rightarrow C'^{\ast-1} \overset{\iota}\rightarrow M(f)^\ast \rightarrow C^\ast \rightarrow 0,\]
	which gives rise to a long exact sequence
	\[\ldots\rightarrow H^{i}(C)\rightarrow H^{i}(C')\rightarrow H^{i+1}(M(f))\rightarrow H^{i+1}(C)\rightarrow\ldots\]
	in which the connecting homomorphism is induced by $f$.
\end{defn}

The next lemma is the essential point.
\begin{lem}\label[lemma]{Tot_mapping_cone}
	Let $C^{\ast,\ast}$ be a double complex with commuting vertical and horizontal maps $d^{h},d^{v}$. There are isomorphisms
	\begin{align*}
		M(\mathfrak{R}^i) &\simeq \mathrm{Tot}(C^{\ast\ge i,\ast})^{\ast+i} \\
		M(\mathfrak{U}^j) &\simeq \mathrm{Tot}(C^{\ast,\ast\le j+1})^\ast.
	\end{align*}
\end{lem}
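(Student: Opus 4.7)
The plan is to simply unfold the definitions and verify that the pieces match on the nose (up to sign) in each degree. For the first isomorphism, in degree $k$, the mapping cone is
\[M(\mathfrak{R}^i)^k = C^{i,k}\oplus \mathrm{Tot}(C^{\ast\ge i+1,\ast})^{k+i} = C^{i,k} \oplus \bigoplus_{\substack{r+s=k+i\\ r\ge i+1}} C^{r,s},\]
which is exactly $\bigoplus_{r+s=k+i,\, r\ge i}C^{r,s}=\mathrm{Tot}(C^{\ast\ge i,\ast})^{k+i}$. So as graded vector spaces the identification is forced on us; we would define $\phi^k:M(\mathfrak{R}^i)^k\to \mathrm{Tot}(C^{\ast\ge i,\ast})^{k+i}$ by $\phi^k(\alpha,\beta)=\varepsilon\alpha+\beta$ for some sign $\varepsilon$ to be pinned down by the differentials.

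The real content is matching the differentials. On the mapping cone side, using that the column $C^{i,\ast}$ has differential $d^v$ and that the convention attached to $\mathfrak{R}$ gives the truncated total complex the differential $d^v+(-1)^{k+i}d^h$, one computes
\[d_{M}(\alpha,\beta)=\bigl(d^v\alpha,\ d^v\beta+(-1)^{k+i}d^h\beta+(-1)^k d^h\alpha\bigr).\]
On the target side, applying $d_{\mathrm{Tot}}=d^v+(-1)^{k+i}d^h$ to $\varepsilon\alpha+\beta$ produces
\[\varepsilon d^v\alpha+\varepsilon(-1)^{k+i}d^h\alpha+d^v\beta+(-1)^{k+i}d^h\beta.\]
Comparing the two expressions, the $d^v$ and $d^h\beta$ terms match automatically, the $d^v\alpha$ term forces $\varepsilon$ to be independent of $k$, and the $d^h\alpha$ term forces $\varepsilon=(-1)^i$. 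So we take $\phi(\alpha,\beta)=(-1)^i\alpha+\beta$; this is a bijection on each graded piece by construction, and the computation above shows it intertwines the differentials.

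For the second isomorphism we do the same thing in mirror image. In degree $k$,
\[M(\mathfrak{U}^j)^k=\mathrm{Tot}(C^{\ast,\ast\le j})^k\oplus C^{k-1-j,j+1} = \bigoplus_{\substack{r+s=k\\ s\le j+1}}C^{r,s}=\mathrm{Tot}(C^{\ast,\ast\le j+1})^k,\]
and the sign-matching exercise proceeds as before; the convention for $\mathfrak{U}$ (differential $d^h+(-1)^k d^v$ on the truncation) will again make the rows and $d^h$-terms come out for free, while the anomalous sign on the new row $C^{\ast-j,j+1}$ is absorbed by rescaling the second summand by an appropriate power of $-1$.

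The only obstacle, really, is the sign bookkeeping: getting the shift conventions for the mapping cone, the shifted chain complex $C'^{\ast-1}$, and the two differing total-complex sign conventions (one attached to $\mathfrak{R}$, the other to $\mathfrak{U}$) to line up consistently. Once one fixes these conventions, the rest is a direct comparison in each degree, and there is no homological content beyond the vector-space identification and a bookkeeping of the four terms above.
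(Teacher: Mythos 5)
Your proof is correct and follows essentially the same route as the paper's: unfold the definitions degree by degree, observe that the graded pieces of the mapping cone and the truncated total complex coincide on the nose, and then match the differentials. The only difference is that you carry out the sign bookkeeping explicitly (pinning down $\varepsilon=(-1)^i$ for $\mathfrak{R}^i$), whereas the paper settles for ``essentially the same'' in the first case and a parity-dependent tweak in the second; your version is a bit more honest about where the signs land, but the underlying argument is identical.
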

\begin{proof}
	First consider $f=\mathfrak{R}^i : C^{i,\ast} \rightarrow \mathrm{Tot}(C^{\ast\ge i+1, \ast})^{\ast+i+1}$. By definition,
	\[M(f)^j = C^{i,j} \oplus \mathrm{Tot}(C^{\ast\ge i+1, \ast})^{i+j+1-1} = \bigoplus_{\substack{r+s=i+j \\ r \ge i}}C^{r,s} = \mathrm{Tot}(C^{\ast\ge i, \ast})^{i+j}, \]
	and the differential of $M(f)^\ast$ is essentially the same as that of $\mathrm{Tot}(C^{\ast\ge i, \ast})$.
	
	Now consider $f=\mathfrak{U}^j:\mathrm{Tot}(C^{\ast,\ast\le j})^\ast\rightarrow C^{\ast-j,j+1}$. This time
	\[M(f)^i = \mathrm{Tot}(C^{\ast,\ast\le j})^i \oplus C^{i-1-j,j+1} = \bigoplus_{\substack{r+s = i \\ s \le j+1}}C^{r,s} = \mathrm{Tot}(C^{\ast,\ast\le j+1})^i,\]
	and the differential of $M(f)^\ast$ is the same as that of the total complex on the right hand side above if $j$ is even. If $j$ is odd, it is harmless to modify the differential of the mapping cone to be
	\[d^i(\alpha,\beta)=(\partial\alpha, \partial'\beta + (-1)^{i-1} f\alpha)\]
	instead of the expression above: the two expressions give isomorphic complexes $M(f)^\ast$.
\end{proof}

\begin{rem}
	Mapping cones are a construction in homological algebra, motivated by a similar construction in algebraic topology. They are found in most textbooks on homological algebra, sometimes with slightly different indexing or sign conventions. The topological construction from which they originate is described, for instance, in chapter $0$ of Hatcher's text \cite{Hatcher}.
\end{rem}

\section{Cohen-Macaulay Complexes and why we care}\label[section]{sec:CM} 

Let us now turn to the little bit of commutative algebra necessary for our purposes. We refer to \cite{AM} for a general account, and \cite{Bruns-Herzog} for something a little more specialized to our situation.

\subsection{The Basic Idea}\label[section]{sec:basic_idea}
Consider a simplicial complex $\Delta$ and its face ring $\K [\Delta]$: if $\Delta$ has at least one vertex $v$, this is a graded ring with $\K [\Delta]_i\neq 0$ for each $i$. Indeed $x_v^i \in \K [\Delta]_i$. Thus, as a vector space over $\K $, each graded piece has finite dimension, but the entire ring is always infinite dimensional. It is useful to work with a finite-dimensional $\K $-algebra instead, provided it preserves enough information about $\K [\Delta]$. The idea then is to "peel" $\K [\Delta]$ by quotienting out an ideal which is as large as reasonably possible. That $\K [\Delta]$ is Cohen-Macaulay means this peeling can be performed especially nicely, as we shall soon see.

The importance of all this is due to the fact that $\K [\Delta]$ is always Cohen-Macaulay if $\Delta$ is the link or star of a face in any triangulation of a manifold with boundary (and in particular if $\Delta$ is a disk or a sphere). This is a shadow of the fact that each point of a manifold has a neighborhood with trivial topology.

We put rings and modules on an equal footing, so these tools are later available for relative face modules.

\begin{defn}
	Let $R$ be a ring and $M$ an $R$-module. A \emph{regular sequence} on $M$, or \emph{$M$-sequence}, is a sequence of elements $(\theta_1,\ldots,\theta_n)$ in $R$ such that:
	\begin{enumerate}
		\item Each $\theta_i$ is a nonzerodivisor on $M / \langle\theta_1,\ldots,\theta_{i-1}\rangle$, and
		\item $M/\langle\theta_1,\ldots,\theta_n\rangle\neq 0$.
	\end{enumerate}
	If $R$ is a graded ring, a sequence as above is called homogeneous if each $\theta_i$ is.
\end{defn} 

We care mainly about homogeneous regular sequences, and among them mainly about those in which all elements have degree $1$. We will see that if the field $\K$ is infinite, $A$ and $M$ are graded with $A$ generated in degree $1$ (in particular $A_0 = \K$), and there exists an $M$-sequence of length $n$, then there also exists an $M$-sequence consisting of degree-$1$ elements.

Quotienting a graded $\K $-algebra $A$ by a regular sequence of degree-$1$ elements is an operation which is well-behaved with respect to the \emph{Hilbert series} of $A$, which we previously encountered in the case $A=\K [\Delta]$. For a graded $\K $-vector space $V$, the Hilbert series is 
\[H_V(t) = \sum_{i=0}^\infty \dim_{\K }(V_i)\cdot t^i.\]
Consider the ideal $\langle\theta_1\rangle$ generated by a nonzerodivisor of $A$ having degree $1$. Since the multiplication map by $\theta_1$ is an injection of vector spaces $A\to A$ which increases the degree by $1,$ we have
\[\dim_{\K }(\langle\theta_1\rangle_i) = \dim_{\K }(A_{i-1}),\]
so
\[H_{\langle\theta_1\rangle}(t) = \sum_{i=0}^\infty \dim_{\K }(A_{i-1})\cdot t^{i} = t\cdot H_{A(t).} \]
In particular, we find that
\[H_{A/\langle\theta_1\rangle}(t) = H_{A}(t) - H_{\langle\theta_1\rangle}(t) = (1-t)\cdot H_{A}(t).\]
Modding out by the ideal $\langle\Theta\rangle$ generated by a regular sequence $\Theta=(\theta_1,\ldots,\theta_n)$ consisting of degree-$1$ elements therefore gives
\[H_{A/\langle\Theta\rangle}(t) = (1-t)^n \cdot H_{A}(t).\]
All this works in just the same way if we instead work with a regular sequence of degree-$1$ elements on an $A$-module $M$.

\begin{defn}
	Let $A$ be a finitely-generated graded $\K $-algebra and let $M$ be a finitely-generated $A$-module. A \emph{homogeneous system of parameters} (h.s.o.p.) for $M$ is a sequence of homogeneous elements $\Theta=(\theta_1,\ldots,\theta_n)$ of $A$ of minimal length among those sequences satisfying that the quotient $M/\langle\Theta\rangle$ is finite dimensional over $\K $. 
\end{defn}

The length of a h.s.o.p. as above is the Krull dimension of the support of $M$ (while this fact gives important context, we will have no further use for it). For a face ring $\K [\Delta],$ it is always $\dim(\Delta)+1$, or equivalently the maximum cardinality of a face. For a relative face module $\K [\Delta,\Gamma]$, it is the maximum cardinality of a face of $\Delta$ which is not contained in $\Gamma$.

\begin{defn}
	Let $A$ be a finitely-generated graded $\K $-algebra, $M$ a finitely-generated graded $A$-module. Then $M$ is \emph{Cohen-Macaulay} if it has a homogeneous system of parameters which is an $M$-sequence.
\end{defn}

It is not difficult to show that if $M$ is Cohen-Macaulay and has h.s.o.p. of length $n$, then any $M$-sequence of length $n$ is also an h.s.o.p., and no longer $M$-sequence can exist. Therefore, again assuming $k$ is infinite and $A$ is generated in degree $1$, this sequence may be chosen to consist of degree $1$ elements of $A$.

Hence, under all these assumptions, a Cohen-Macaulay $M$ can be "peeled" as nicely as can be hoped: there is an $M$-sequence $\Theta=(\theta_1,\ldots,\theta_n)$ such that
\[(1-t)^n \cdot H_M(t) = H_{M/\langle\Theta\rangle}(t)\]
is a polynomial. That the left-hand side is a polynomial is true even if the Cohen-Macaulay assumption is omitted. It is the equality to the Hilbert series on the right that is exceptional. A numerical consequence is that the coefficients of this polynomial are positive. More important for us is that the dimensions of graded pieces of $M/\langle\Theta\rangle$ are related to those of $M$ by an explicit formula depending on $n$ alone, and that $M/\langle\Theta\rangle$ is finite dimensional.

\subsubsection{An explicit calculation} Let us see what these dimensions are. First, the Hilbert series $H_{\K [\Delta]}$ is controlled by the $f$-vector of $\Delta$ in the following way. Each non-vanishing monomial in $\K [\Delta]$ is supported on a unique face of $\Delta$, and the set of monomials with given support $\{v_{i_1},\ldots,v_{i_m}\}$ is
\[\left\{\prod_{j=1}^m x_{i_j}^{s_j} \mid 1 \le s_1,\ldots,s_m\right\}.\]
The span of this set is a graded $\K $-vector space with Hilbert series equal to the rational function 
\[\frac{x^m}{(1-x)^m} = \frac{x^m (1-x)^{d+1 - m}}{(1-x)^{d+1}}\]
where $d = \dim(\Delta)$. Summing over faces (and keeping in mind the difference between face dimension and cardinality) we obtain
\[H_{\K [\Delta]} = \frac{\sum_{i=-1}^{d} f_i \cdot x^{i+1} (1-x)^{d - i}}{(1-x)^{d+1}}. \]
Thus if $\K [\Delta]$ is Cohen-Macaulay, its quotient by a regular sequence of length $d+1$ has Hilbert series (now polynomial) $\sum_{i=-1}^d f_i \cdot x^{i+1} (1-x)^{d - i},$ and one can verify it equals $\sum_{i=0}^{d+1} h_i x^i$ where $(h_0,\ldots,h_{d+1})$ is the $h$-vector of $\Delta$. The entries of the $h$-vector have tremendous meaning, and tell us more intimately what a simplicial complex is about than the face numbers or the Betti numbers do. For instance, they display some fundamental symmetries of the face vector. When we prove the Poincar\'e duality theorem for face rings (\cref{thm:pd}), this will imply the famous Dehn-Sommerville relations \cite{Sommerville}: for spheres $\Delta$ of dimension $d-1$, we have the fundamental symmetry
\[h_i\ =\ h_{d-i}.\]
Think of this as a generalization of the fact that the alternating sum of the face numbers equals the Euler characteristic. This fact can be seen as a special case: it is equivalent to the relation $h_0\ =\ h_{d}$.

\subsubsection{Associated primes and prime avoidance}
We need the following notions from commutative algebra.

Let $A$ be a finitely generated $\K$-algebra and let $M$ be an $A$-module.

\begin{defn}
	A prime ideal $\mathfrak{p}\subset A$ is \emph{associated to} $M$ if there exists an $m\in M$ such that
	\[\{a\in A\mid am = 0\} = \mathfrak{p},\]
	i.e. if $\mathfrak{p}$ is the annihilator of $m$.
\end{defn}
\begin{lem}\label[lemma]{lem:associated}
	The set of zerodivisors on $M$ equals the union of the primes associated to $M$. That is,
	\[\{a\in A\mid am = 0 \text{ for some $0\neq m\in M$}\} = \bigcup_{\substack{\mathfrak{p} \subset A \text{ prime}\\ \text{$\mathfrak{p}$ is associated to $M$}}} \mathfrak{p}.\]
\end{lem}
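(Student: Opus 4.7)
The plan is to prove the two inclusions separately. The inclusion $\supseteq$ is immediate from the definition: if $\mathfrak{p}=\mathrm{Ann}(m_0)$ for some nonzero $m_0\in M$, then every $a\in\mathfrak{p}$ satisfies $am_0=0$, so $a$ is a zerodivisor on $M$.

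For the harder inclusion $\subseteq$, given a zerodivisor $a$ on $M$, the goal is to exhibit an associated prime of $M$ that contains $a$. I would fix some $0\neq m\in M$ with $am=0$ and consider the family
\[\mathcal{F}=\{\mathrm{Ann}(m')\,:\,0\neq m'\in M \text{ and } am'=0\}.\]
This family is nonempty (it contains $\mathrm{Ann}(m)$), and every ideal in $\mathcal{F}$ contains $a$ by construction. Since $A$ is a finitely generated $\K$-algebra, the Hilbert basis theorem ensures $A$ is Noetherian, so $\mathcal{F}$ has a maximal element $\mathfrak{p}=\mathrm{Ann}(m_0)$ with respect to inclusion. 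It will then remain only to verify that $\mathfrak{p}$ is prime.

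To see that $\mathfrak{p}$ is prime, I would argue as follows. Suppose $bc\in\mathfrak{p}$ while $c\notin\mathfrak{p}$; the goal is to conclude $b\in\mathfrak{p}$. Since $c\notin\mathrm{Ann}(m_0)$, we have $cm_0\neq 0$, and the identity $a(cm_0)=c(am_0)=0$ places $\mathrm{Ann}(cm_0)$ in $\mathcal{F}$. Moreover, every $x\in\mathrm{Ann}(m_0)$ kills $cm_0$ as well, so $\mathrm{Ann}(cm_0)\supseteq\mathfrak{p}$, and maximality in $\mathcal{F}$ forces equality. Finally, $b(cm_0)=(bc)m_0=0$ gives $b\in\mathrm{Ann}(cm_0)=\mathfrak{p}$, as required.

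The main subtle point is the design of $\mathcal{F}$: by incorporating the constraint ``$a$ kills $m'$'' into the family, the maximal annihilator is automatically forced to contain the prescribed zerodivisor $a$, while the crucial step of enlarging $m_0$ to $cm_0$ preserves membership in $\mathcal{F}$. Without this twist, a naive maximal annihilator is still prime but need not contain $a$, and one would have to do extra work to match the zerodivisor to the prime.
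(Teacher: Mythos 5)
Your proof is correct. The paper itself does not prove this lemma: it is stated without proof in the subsection on associated primes, and the concluding remark of \Cref{sec:CM} explicitly defers such standard facts to textbooks on commutative algebra. So there is no proof in the paper to compare against.

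For the record, your argument is a slight — and rather elegant — variant of the standard textbook one. The usual route (see e.g.\ Atiyah--Macdonald or Eisenbud) first shows that any ideal maximal among all annihilators $\mathrm{Ann}(m')$, $0\neq m'\in M$, is prime, and then, given a zerodivisor $a$ with $am=0$, enlarges $\mathrm{Ann}(m)$ (using Noetherianity to pick a maximal element of the subfamily of annihilators containing $\mathrm{Ann}(m)$) to reach an associated prime containing $a$. You instead bake the constraint ``$a\in\mathrm{Ann}(m')$'' into the family $\mathcal{F}$ from the start and observe that $\mathcal{F}$ is stable under passing from $m_0$ to $cm_0$ when $c\notin\mathrm{Ann}(m_0)$ (because $a(cm_0)=c(am_0)=0$ by commutativity of $A$), so maximality within $\mathcal{F}$ already forces primality and the containment $a\in\mathfrak{p}$ simultaneously. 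This buys a one-step argument instead of two, at no cost in hypotheses. The two small points to keep an eye on are implicit but in order: $A$ is commutative (you use it in swapping $a$ past $c$), and $\mathfrak{p}\neq A$ because $1\notin\mathrm{Ann}(m_0)$ for $m_0\neq 0$.
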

\begin{lem}\label[lemma]{lem:prime_avoidance}
	Let $I \subset A$ be an ideal and let $\mathfrak{p}_1,\ldots\mathfrak{p}_n \subset A$ be prime ideals. If
	\[I \subset \bigcup_i \mathfrak{p}_i\]
	then $I \subset \mathfrak{p}_j$ for some $1\le j \le n$.
\end{lem}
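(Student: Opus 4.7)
The plan is to proceed by induction on $n$, the number of prime ideals covering $I$. The base case $n=1$ is immediate. For $n\ge 2$, I would argue by contradiction: suppose $I$ is not contained in any single $\mathfrak{p}_j$. Then for each fixed index $i$, the inductive hypothesis applied to the family $\{\mathfrak{p}_j : j\neq i\}$ (which consists of $n-1$ primes) tells us that $I \not\subset \bigcup_{j\neq i}\mathfrak{p}_j$, so there is some $x_i \in I$ with $x_i \notin \mathfrak{p}_j$ for all $j\neq i$. If any such $x_i$ also escaped $\mathfrak{p}_i$, we would immediately contradict $I\subset \bigcup_j \mathfrak{p}_j$; so we may assume $x_i \in \mathfrak{p}_i$ for every $i$.

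The heart of the argument is then to produce a single element of $I$ lying in no $\mathfrak{p}_j$. The natural candidate, inspired by the standard prime-avoidance trick, is
\[ z \;=\; \sum_{i=1}^n y_i, \qquad y_i \;=\; \prod_{j\neq i} x_j. \]
Clearly $z\in I$ since $I$ is an ideal containing each $x_j$. Now fix any index $i$. For every $j\neq i$, the factor $x_i$ appears in the product $y_j$, and $x_i\in\mathfrak{p}_i$, so $y_j\in\mathfrak{p}_i$. On the other hand, $y_i = \prod_{j\neq i} x_j$ is a product of elements that, by construction, all lie outside $\mathfrak{p}_i$; since $\mathfrak{p}_i$ is prime, $y_i\notin \mathfrak{p}_i$. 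Hence $z - y_i \in \mathfrak{p}_i$ but $y_i \notin \mathfrak{p}_i$, giving $z\notin\mathfrak{p}_i$. This holds for every $i$, so $z\in I\setminus \bigcup_i \mathfrak{p}_i$, contradicting the hypothesis.

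No real obstacle arises beyond guessing the right combinatorial expression for $z$; primality is used only in the single step that ensures a product of non-members of $\mathfrak{p}_i$ is again a non-member. It is worth noting that the argument uses primality of \emph{all} the $\mathfrak{p}_i$, whereas the classical sharpening in commutative algebra allows two of them to be arbitrary ideals; since the lemma is stated only for primes, we need not pursue this refinement.
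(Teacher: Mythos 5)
Your proof is correct; it is the standard textbook argument for prime avoidance (induction on $n$, picking $x_i\in I$ that avoid all $\mathfrak{p}_j$ with $j\neq i$, then forming $z=\sum_i\prod_{j\neq i}x_j$ and using primality to see $\prod_{j\neq i}x_j\notin\mathfrak{p}_i$). There is, however, nothing to compare it against in the paper: the paper states the lemma without proof, and the subsequent remark explicitly defers to standard commutative algebra references for such facts. The only commentary the paper gives is the heuristic following the lemma, namely that this is the ring-theoretic analogue of the statement that finitely many proper subspaces of a vector space over an infinite field cannot cover it --- offered as intuition, not as a proof. So your argument fills in a proof the paper deliberately omits, and it is the canonical one; your closing observation that primality is needed only for the step $\prod_{j\neq i}x_j\notin\mathfrak{p}_i$, and that the classical refinement allows two of the ideals to be arbitrary, is accurate and well placed.
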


The last lemma is an analogue of the fact that if $V$ is a vector space over an infinite field $\K$ and $W_1,\ldots,W_n$ are proper subspaces, then $\bigcup_i W_i \subsetneq V$ and a generic element of $V$ is not in $\bigcup_i W_i$.

That some property holds for a generic element of a vector space means it holds for a member of a dense open subset of the vector space with respect to an appropriate topology. One such topology here is the Zariski topology obtained by identifying $V$ with an affine space over $\K$.

\begin{rem}[some technicalities]
	The Cohen-Macaulay property has an important role in commutative algebra and algebraic geometry. The discussion here is specialized to the context of face rings: our definition is not the common one outside combinatorics. That it is a specialization of other definitions is a theorem we will not need or prove.
	
	Some amount of dimension theory for commutative rings could not be avoided in the discussion on systems of parameters. Short computational proofs of the necessary facts can be given, and the general case can be found in most textbooks on commutative algebra. We recommend the unfamiliar reader simply accept them for now.
\end{rem}

\subsection{The Koszul Complex}\label[section]{sec:Koszul}
The Koszul complex is a homological tool. It can be used computationally, for instance to find the length of the longest regular sequence contained in an ideal. Conversely, given a regular sequence in $A$ generating an ideal $I$, the Koszul complex gives a free resolution of $A/I$. This is a chain complex of free modules such that $A/I$ is its last (and only nonvanishing) cohomology group. In a sense, it spreads the quotienting operation into simpler layers.

The point of this will become apparent when the partition complex is introduced, and we begin dealing with all stars of faces of a simplicial complex as a cohesive unit. 

Let $A$ be a $\K $-algebra and let $\Theta=(\theta_1,\ldots,\theta_n)$ be a sequence of elements in $A$. The Koszul complex $K^\ast = K^\ast(\Theta)$ is the chain complex
\[0 \rightarrow K^0 \overset{\partial_0}{\rightarrow} K^1 \overset{\partial_1}{\rightarrow} \ldots \rightarrow K^n \overset{\partial_n}{\rightarrow} 0\]
where $K^0 = A$ and in general $K^i = \bigwedge^i A^n = \bigwedge^i \left(\bigoplus_{j=1}^n A\cdot e_j\right).$ The maps $\partial^i:K^i\rightarrow K^{i+1}$ are defined by
\[z\mapsto\left(\sum_{i=1}^{n}\theta_{i}e_{i}\right)\wedge z.\]
This is largely consistent with the notation used by Eisenbud in \cite{eis}.

We collect some basic facts and observations. These culminate in \Cref{thm:Koszul}.

\subsubsection{Exterior powers}
It is helpful to recall that $\bigwedge^i \left(\bigoplus_{j=1}^n A\cdot e_j\right)$ is a free $A$-module with basis all wedges of the form $e_{j_1}\wedge\ldots\wedge e_{j_i}$ where $1\le j_1 < \ldots < j_i\le n$. Thus as $A$-modules,
\[K^i \otimes M = \left(\bigwedge^i A^n\right) \otimes M \simeq \bigoplus_{1\le j_1 <\ldots < j_i\le n}M\cdot e_{j_1}\wedge\ldots\wedge e_{j_i}. \]
The maps of the complex $K^\ast \otimes M$, being induced from $K^\ast$, are defined by the same expression for each $\partial^i$.

\subsubsection{The top cohomology}\label[section]{sec:top_cohomology} Note that $K^n = A\cdot e_1 \wedge \ldots \wedge e_n$ is a free module of rank $1$, and thus $K^n \otimes M \simeq M$. Further, the image of the map $K^{n-1} \otimes M \overset{\partial_{n-1}}{\rightarrow} K^n \otimes M$ is $\langle\Theta\rangle\cdot M$: on generators of $K^{n-1}$ we have 
\[\partial_{n-1}\left(\bigwedge_{j\neq i}e_j\right) =(-1)^{i+1} \theta_i\cdot e_1\wedge\ldots \wedge e_n.\]
This implies $H^n(K^\ast \otimes M) = M/\langle\Theta\rangle M.$

\subsubsection{\texorpdfstring{The action of $z\in \langle\Theta\rangle$ on homology}{Multiplication maps on the homology}}
Suppose $z \in \langle\Theta\rangle$. Then $z$ induces the zero map on $H^i(K^\ast \otimes M)$ for each $i$. To see this, write \[z = \sum_{i=1}^n a_i \theta_i,\] and define the following maps $f^t :K^t  \rightarrow K^{t-1}$:
\[e_{j_1}\wedge\ldots\wedge e_{j_t} \mapsto \sum_{i=1}^t (-1)^{i-1} a_{j_i} \cdot e_{j_1}\wedge\ldots\wedge\hat{e}_{j_i}\wedge\ldots\wedge e_{j_t},\]
where $\hat{e}_{j_i}$ denotes omission. Then the map $\partial^{t-1}\circ f^t + f^{t+1}\circ\partial^t$ acts on $K^t$ as the multiplication map by $z$, and is thus a chain homotopy between $z\cdot$ and the zero map. This remains true on tensoring $K^\ast$ with $M$.

Let us compute $\partial^{i-1}\circ f^i + f^{i+1}\circ\partial^i$ to verify this claim. Without loss of generality, consider the basis element $b = e_1 \wedge\ldots\wedge e_i$. Then
\begin{align*} 
	\partial^{i-1}\circ f^i(b) &= \partial^{i-1}\left(\sum_{j=1}^i (-1)^{j-1} a_j\cdot e_1 \wedge\ldots\wedge \hat{e}_j\wedge\ldots\wedge e_i\right) \\
	&= \sum_{j=1}^i \left[a_j\theta_j\cdot e_1\wedge\ldots\wedge e_i \phantom{\sum_{s=i+1}^n}\right. \\
	& \left. \phantom{= \sum_{j=1}^i \left[\right. }+ \sum_{s=i+1}^n (-1)^{i+j-2}a_j\theta_s\cdot e_1\wedge\ldots\wedge \hat{e}_j\wedge\ldots\wedge e_i\wedge e_s\right].
\end{align*}

Similarly, 
\begin{align*} 
	f^{i+1}\circ \partial^i(b) &= f^{i+1}\left(\sum_{s=i+1}^n (-1)^i\theta_s\cdot e_1\wedge\ldots\wedge e_i\wedge e_s\right) \\
	&= \sum_{s=i+1}^n \left[ \sum_{j=1}^i \right. (-1)^{i+j - 1}a_j\theta_s \cdot e_1\wedge\ldots\wedge \hat{e}_j\wedge\ldots\wedge e_i\wedge e_s \\& \left.\phantom{= \sum_{s=i+1}^n \sum_{j=1}^i } + a_s\theta_s\cdot e_1 \wedge\ldots\wedge e_i\right].
\end{align*}
We see the coefficients cancel in the sum to give $\sum_{j=1}^n a_j\theta_j\cdot e_1\wedge\ldots\wedge e_i = z\cdot e_1\wedge\ldots\wedge e_i$ as desired.

\subsubsection{Koszul homology and regular sequences} We are ready to prove that the homology of the Koszul complex $K^\ast\otimes M$ tells us the maximal length of an $M$-sequence in $\langle\Theta\rangle$.

Recall that the irrelevant ideal of a graded ring is the ideal generated by homogeneous elements of positive degree. For a face ring $\K [\Delta]$, this is simply the ideal $\langle x_v \mid v\in \Delta^{(0)}\rangle$.

\begin{thm}\label[theorem]{thm:Koszul}
	Let $A$ be a graded $\K $-algebra. If $M$ is a finitely-generated graded $A$-module and $\Theta=(\theta_1,\ldots,\theta_n)$ is a sequence of homogeneous elements contained in the irrelevant ideal of $A$, the minimal $i$ for which $H^i(K^\ast(\Theta)\otimes M)\neq 0$ is the maximal length of an $M$-sequence contained in $\langle\Theta\rangle$. This length is at most $n$.
	
	Further, if $\K$ is infinite, generic linear combinations of $\theta_1,\ldots,\theta_n$ give regular sequences.
\end{thm}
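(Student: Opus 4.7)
The goal is to show $\mu := \min\{i : H^i(K^\ast(\Theta)\otimes M)\neq 0\}$ equals $\lambda :=$ the maximal length of an $M$-sequence in $\langle\Theta\rangle$, and that generic degree-$1$ combinations of the $\theta_j$ realize this length. The upper bound $\lambda \leq n$ will follow from $\mu \leq n$, itself immediate from \Cref{sec:top_cohomology}: $H^n(K^\ast\otimes M)=M/\langle\Theta\rangle M$ is nonzero by graded Nakayama (assuming $M\neq 0$, which we may), since $\Theta$ lies in the irrelevant ideal.

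The main engine is the short exact sequence $0 \to M \xrightarrow{z} M \to M/zM \to 0$ for a nonzerodivisor $z \in \langle\Theta\rangle$. Tensoring against the free complex $K^\ast$ preserves exactness, yielding a long exact sequence in Koszul cohomology in which the self-maps $H^i(K^\ast\otimes M)\xrightarrow{z} H^i(K^\ast\otimes M)$ vanish by the chain homotopy constructed just before the theorem statement. Hence the long exact sequence decomposes into short exact sequences
\[0 \to H^i(K^\ast\otimes M) \to H^i(K^\ast\otimes M/zM) \to H^{i+1}(K^\ast\otimes M) \to 0,\]
from which the recursion $\mu(M/zM) = \mu(M)-1$ falls out. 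I would then induct on $\mu$. The base case $\mu = 0$ means $H^0 = \{m \in M : \theta_j m = 0\ \forall j\}$ is nonzero; by \Cref{lem:associated} and \Cref{lem:prime_avoidance} this is equivalent to $\langle\Theta\rangle$ being contained in a single associated prime of $M$, which is in turn equivalent to every element of $\langle\Theta\rangle$ being a zerodivisor, i.e.\ $\lambda = 0$. When $\mu \geq 1$, pick $z \in \langle\Theta\rangle$ a nonzerodivisor, apply the recursion, and prepend $z$ to the $M/zM$-sequence produced by induction. The reverse inequality is handled symmetrically by inducting on the length of a given $M$-sequence and using the same SES with its first element.

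For the generic statement I would construct $\phi_k = \sum_j c_{kj}\theta_j$ one at a time: at each stage the module $M_{k-1} := M/(\phi_1,\ldots,\phi_{k-1})M$ satisfies $\mu(M_{k-1}) = \lambda - (k-1) \geq 1$, so by the base-case analysis no associated prime of $M_{k-1}$ contains all of $\langle\Theta\rangle$. Therefore each such associated prime intersected with $V := \mathrm{span}_\K(\theta_1,\ldots,\theta_n)$ is a proper $\K$-linear subspace; since $\K$ is infinite, the finite union of these proper subspaces cannot exhaust $V$, and a generic $\phi_k \in V$ avoids them all, hence is a nonzerodivisor on $M_{k-1}$. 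The most delicate point is this iterated genericity: it depends on ensuring that after each quotient the Koszul depth drops by exactly one so that $\langle\Theta\rangle$ still acts with positive depth on $M_{k-1}$, which is precisely what the recursion $\mu(M_{k-1}) = \mu(M) - (k-1)$ guarantees.
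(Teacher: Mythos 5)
Your argument is correct and follows essentially the same route as the paper: induction on the minimal nonvanishing Koszul cohomology index, the short exact sequence $0 \to M \xrightarrow{z} M \to M/zM \to 0$ with the vanishing of multiplication on Koszul cohomology, the associated-prime and prime-avoidance characterization of the base case, and building the generic regular sequence one element at a time by avoiding a finite union of proper subspaces. Your packaging of the long exact sequence into short exact sequences
\[0 \to H^i(K^\ast\otimes M) \to H^i(K^\ast\otimes M/zM) \to H^{i+1}(K^\ast\otimes M) \to 0\]
to extract the recursion $\mu(M/zM)=\mu(M)-1$ in one stroke is a small organizational improvement, but the underlying computation is the one the paper carries out inline.
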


\begin{rem}
	We state this theorem in the graded setting, but it is true for Noetherian rings and Noetherian modules in general. The condition that $\Theta$ consists of homogeneous elements contained in the irrelevant ideal should be replaced by the condition $\langle\Theta\rangle\cdot M\neq M$. Generalizing the proof to this situation is an exercise.
\end{rem}

\begin{proof}
	By induction on $i$. For $i=0$, $H^0(K^\ast(\Theta)\otimes M)\neq 0$ iff the map $M \rightarrow \bigoplus_{i=1}^n M\cdot e_i$ given by $m \mapsto \sum_{i=1}^n \theta_i m \cdot e_i$ has a nontrivial kernel, and this occurs iff some $m\in M$ is in the kernel of each multiplication map $\theta_i\cdot: M\to M$ (equivalently, some $m$ is in the kernel of each $z \in \langle \Theta \rangle$).
	
	Suppose the minimal $i$ for which $H^i(K^\ast(\Theta)\otimes M)\neq 0$ is positive, and assume the claim is true up to $i-1$. Then in particular, $H^0(K^\ast(\Theta)\otimes M) =0$, and there is no $m\in M$ in the kernel of all $\langle\Theta\rangle$. Thus $\langle\Theta\rangle$ is not contained in any associated prime (or it would annihilate an element of $M$ by \Cref{lem:associated}). There is then some $z_1\in \langle\Theta\rangle$ which is a nonzerodivisor on $M$, and if $\K$ is infinite we may take $z_1$ to be a linear combination of $\theta_1,\ldots\theta_n$ (a generic one works). Note $M/z_1M \neq 0$, or equivalently $M \neq z_1 M$, since $z_1$ has positive degree, and for the minimal degree $d$ such that $M_d \neq 0$ we have $M_d \cap z_1M = 0$. 
	
	Since each $K^j$ is free over $A$, its tensor product with the short exact sequence
	\[ 0 \to M \overset{z_1\cdot}{\to}M \to M/z_1M \to 0\]
	is exact. This gives a short exact sequence of complexes:
	\[ 0 \to K^\ast(\Theta)\otimes M \overset{z_1\cdot}{\to}K^\ast(\Theta)\otimes  M \to K^\ast(\Theta)\otimes M/z_1M \to 0,\]
	resulting in a long exact sequence in homology:
	\[ \ldots \rightarrow H^j(K^\ast \otimes M) \rightarrow H^j(K^\ast \otimes (M/z_1 M)) \rightarrow H^{j+1}(K^\ast \otimes M) \rightarrow \ldots \]
	where $H^{j+1}(K^\ast \otimes M) = 0$ for $j+1<i$, proving that $H^j(K^\ast \otimes (M/z_1M))=0$ for $j<i-1$. In particular, by the inductive assumption applied to $M/z_1M$, there is an $M/z_1M$-sequence of length $i-1$ in $\langle\Theta\rangle$. Denoting it $z_2,\ldots,z_i$, we have that $z_1,z_2,\ldots,z_i$ is an $M$-sequence.
	
	We will be finished if we prove $H^{i-1}(K^\ast \otimes (M/z_1M))\neq 0$, since applying the inductive claim to $M/z_1M$ then shows that there is no longer $M$-sequence starting with $z_1$ in $\langle\Theta\rangle$, where $z_1$ is an arbitrary nonzerodivisor on $M$.
	
	A piece of the long exact sequence from before reads:
	\[ \ldots \rightarrow 0 \rightarrow H^{i-1}(K^\ast \otimes (M/z_1M)) \rightarrow H^i(K^\ast \otimes M) \overset{z_1\cdot}{\rightarrow} H^i(K^\ast \otimes M)\rightarrow \ldots, \]
	where the first $0$ is $H^{i-1}(K^\ast \otimes M)$. Thus if $H^{i-1}(K^\ast \otimes (M/z_1 M)) = 0$ then $z_1$ is a nonzerodivisor on $H^i(K^\ast \otimes M)\neq 0$. This is a contradiction, since each element of $\langle\Theta\rangle$ acts as the $0$ map on $H^i(K^\ast\otimes M)$.
\end{proof}
\begin{cor}\label[corollary]{cor:CM_choice}
	Under the conditions of the theorem, all maximal $M$-sequences in $\langle\Theta\rangle$ have the same length.
\end{cor}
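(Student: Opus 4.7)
The plan is to apply \Cref{thm:Koszul} iteratively, exploiting the fact that passing to a quotient by a regular element of $\langle\Theta\rangle$ decreases the minimal nonvanishing Koszul cohomology index by exactly one.

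Set $L = \min\{i : H^i(K^\ast(\Theta)\otimes M) \neq 0\}$, which by \Cref{thm:Koszul} equals the maximal length of any $M$-sequence in $\langle\Theta\rangle$. Every maximal sequence therefore has length at most $L$; the task is to rule out maximal sequences of length strictly less than $L$.

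The key observation I would establish is the following: for a finitely generated graded $A$-module $N$ and a nonzerodivisor $z\in\langle\Theta\rangle$ on $N$, the minimal $i$ for which $H^i(K^\ast(\Theta)\otimes(N/zN))\neq 0$ is exactly one less than the corresponding quantity for $N$. The argument mirrors the proof of \Cref{thm:Koszul}: tensor the short exact sequence $0\to N\overset{z}\to N\to N/zN\to 0$ with the free complex $K^\ast(\Theta)$ and take the resulting long exact sequence in cohomology. Because $z\in\langle\Theta\rangle$ acts as the zero map on each Koszul cohomology group (via the chain homotopy constructed in \Cref{sec:Koszul}), the long exact sequence decomposes into short exact pieces
\[0\to H^j(K^\ast\otimes N)\to H^j(K^\ast\otimes(N/zN))\to H^{j+1}(K^\ast\otimes N)\to 0,\]
from which the drop by one is immediate.

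With this tool in hand, I would iterate: given a maximal $M$-sequence $(z_1,\ldots,z_k)\subset\langle\Theta\rangle$, applying the observation $k$ times shows that $M' := M/\langle z_1,\ldots,z_k\rangle M$ has minimal Koszul cohomology index $L-k$. Maximality of the sequence means no element of $\langle\Theta\rangle$ is a nonzerodivisor on $M'$ (the nonvanishing condition $M/\langle z_1,\ldots,z_{k+1}\rangle M\neq 0$ is automatic when $z_{k+1}$ lies in the irrelevant ideal, exactly as in the proof of \Cref{thm:Koszul}). By \Cref{thm:Koszul} applied to $M'$, this is equivalent to $H^0(K^\ast(\Theta)\otimes M')\neq 0$, i.e.\ the minimal index for $M'$ is zero. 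Hence $L-k=0$, and every maximal sequence has length $L$.

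The only point that requires genuine care is the key observation about the one-step drop, but since the splitting of the long exact sequence relies only on the vanishing of the $\langle\Theta\rangle$-action on Koszul cohomology already proved in \Cref{sec:Koszul}, I do not expect a real obstacle — the corollary should follow essentially by bookkeeping on top of \Cref{thm:Koszul}.
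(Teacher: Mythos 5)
Your proof is correct and takes essentially the same route as the paper, which disposes of the corollary in a single sentence by pointing back to the proof of \Cref{thm:Koszul}: since that induction picks an arbitrary nonzerodivisor in $\langle\Theta\rangle$ at each step and shows the minimal nonvanishing Koszul cohomology index drops by exactly one, no maximal sequence can terminate earlier than another. You have simply made that reference explicit, and your packaging of the drop-by-one step (splitting the long exact sequence into short exact pieces via the vanishing of the $\langle\Theta\rangle$-action on Koszul cohomology) is a slightly tidier presentation of the same two ingredients — the tensored long exact sequence and the chain homotopy from \Cref{sec:Koszul} — that the paper's proof of \Cref{thm:Koszul} uses.
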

\begin{proof}
	This follows from the proof above, which picks an arbitrary nonzerodivisor at each step of the induction. There is no possibility of getting stuck in the construction of one sequence earlier than in another.
\end{proof}
\begin{rem}[further notes]
	The explicit chain homotopy used earlier in this section can be replaced by a longer but more illuminating method, which we indicate.
	The Koszul complexes $K^\ast_n=K^\ast(\theta_1,\ldots,\theta_n)$ and $K^\ast_{n+1}=K^\ast(\theta_1,\ldots,\theta_{n+1})$ can be related in the following way: the mapping cone of the map $\theta_{n+1}\cdot:K^\ast_n \rightarrow K^\ast_n$ can be naturally identified with $K^\ast_{n+1}$. Hence there is a short exact sequence of chain complexes 
	\[ 0 \rightarrow K^{\ast-1}_n \rightarrow K^\ast_{n+1} \rightarrow K^\ast_n \rightarrow 0. \]
	If $\theta_{n+1} - \theta'_{n+1} \in \langle \theta_1,\ldots,\theta_n\rangle$, the associated short exact sequences are isomorphic, essentially by a change of basis in each $K^i_{n+1}=\bigwedge^i (A^{n+1})$. In particular, for $\theta_{n+1}\in \langle \theta_1,\ldots,\theta_n\rangle$, one can take $\theta'_{n+1}=0$. The associated long exact sequences are then isomorphic by the naturality of the snake lemma. For $\theta_{n+1}$, the long exact sequence is:
	
	\[\ldots \rightarrow H^{i-1}(K^\ast_n) \rightarrow H^i(K^\ast_{n+1}) \rightarrow H^i(K^\ast_n) \overset{\theta_{n+1} \cdot}\rightarrow H^i(K^\ast_n) \rightarrow \ldots,\]
	and the existence of an isomorphic sequence in which $\theta_{n+1}$ is replaced by $0$ proves the claim. 
\end{rem}

%
%

\section{The Partition Complex \& Reisner's Theorem}\label[section]{sec:Reisner}
In this section we introduce our central tool, the partition complex. As a first application we prove Reisner's theorem, which provides a topological characterization of Cohen-Macaulay complexes.

\subsection{The Partition Complex} In many branches of geometry homological algebra provides tools for piecing together local data and understanding global behavior. In conjunction with the Koszul complex, the partition complex is a tool suitable for understanding face rings and their quotients by systems of parameters.

\begin{defn}
	Let $\Psi=(\Delta,\Gamma)$ be a relative complex. We define the (unreduced) partition complex $P^\ast = P^\ast(\Psi)$ by
	\begin{align*}
	P^\ast = 0\rightarrow \K [\Psi]\overset{d^{-1}}{\rightarrow}\bigoplus_{v\in\Delta^{(0)}}\K [\mathrm{st}_{v}\Psi]\overset{d^{0}}{\rightarrow} 
	\ldots\rightarrow\bigoplus_{\sigma\in\Delta^{(d)}}\K [\mathrm{st}_{\sigma}\Psi]\rightarrow 0, 
	\end{align*}
	with indexing such that $P^{-1}= \K [\Psi]$ and $P^i = \bigoplus_{\tau\in\Delta^{\left(i\right)}}\K [\mathrm{st}_\tau \Psi]$ for $i\ge 0$.
	
	The maps are given by maps of the \v{C}ech complex corresponding to the cover of $(\Delta, \Gamma)$ by the open stars of vertices.
	
	More explicitly, choose some ordering of the vertices. Then if $\rho \subset \tau = \rho\cup\{v_{i_j}\}$ is a pair of faces, and $\tau = \{v_{i_1},\ldots,v_{i_k}\}$ with $i_1 < \ldots < i_k$, then denoting by $\alpha \cdot e_\rho$ the element $\alpha$ in the summand $\K [\mathrm{st}_\rho\Psi]$ (and similarly with $e_\tau$) we define
	\[d(\alpha\cdot e_\rho) = (-1)^j\alpha\cdot e_\tau.\]
	In particular, the map $P^{-1} \rightarrow P^0$ is 
	$\alpha \mapsto \sum_{v\in\Delta^{(0)}}\alpha\cdot e_v.$
\end{defn}

Note that if $x^\alpha$ is a monomial in $\K [\Psi]$, it is a nonvanishing element of $\K [\mathrm{st}_\tau\Psi]$ precisely when the support of $\alpha$ is a face of $\mathrm{st}_\tau(\Delta)$ (it is automatic that it is not a face of $\Gamma$, hence it is not in $\mathrm{st}_\tau(\Gamma)$). The next proposition uses this observation to compute the cohomology of $P^\ast$.

\begin{prop}\label[proposition]{prop:partition_homology}
	For each $i$, $H^i(P^\ast)=H^i(\Psi;\K )$.
\end{prop}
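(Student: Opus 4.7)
My plan is to exploit the fine grading by exponent vectors. The partition complex differential preserves the fine degree, so $P^\ast$ decomposes into a direct sum of subcomplexes $P^\ast_\alpha$, one for each $\alpha \in \mathbb{Z}_{\geq 0}^{\Delta^{(0)}}$. It therefore suffices to show that $H^i(P^\ast_0) = H^i(\Psi;\K)$ and that $H^i(P^\ast_\alpha) = 0$ whenever $\alpha \neq 0$.

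Fix $\alpha$ and let $\sigma := \mathrm{supp}(\alpha)$; one may assume $\sigma \in \Delta$, since otherwise $P^\ast_\alpha$ is already zero. By the observation immediately preceding the statement, the summand $\K[\mathrm{st}_\tau\Psi]$ of $P^i$ contributes a one-dimensional $\K \cdot (x^\alpha e_\tau)$ to $(P^i)_\alpha$ exactly when $\sigma \cup \tau \in \Delta \setminus \Gamma$, and zero otherwise. Set $\Delta_\sigma := \mathrm{st}_\sigma \Delta$ and $\Gamma_\sigma := \{\tau \in \Delta \mid \tau \cup \sigma \in \Gamma\}$; the latter is a subcomplex of $\Delta_\sigma$ by downward-closure of $\Gamma$. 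I would then identify $P^\ast_\alpha$ with the augmented simplicial cochain complex of the relative complex $(\Delta_\sigma, \Gamma_\sigma)$: the alternating signs $(-1)^j$ in $d(x^\alpha e_\rho) = (-1)^j x^\alpha e_\tau$ match those of the cochain coboundary exactly. This gives $H^i(P^\ast_\alpha) \cong H^i(\Delta_\sigma, \Gamma_\sigma; \K)$.

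For $\alpha = 0$ we have $\sigma = \emptyset$ and $(\Delta_\emptyset, \Gamma_\emptyset) = (\Delta, \Gamma) = \Psi$, so $H^i(P^\ast_0) = H^i(\Psi; \K)$, as required. For $\alpha \neq 0$, pick any vertex $v \in \sigma$. Since $v \in \sigma$, the identity $(\tau \cup \{v\}) \cup \sigma = \tau \cup \sigma$ shows that both $\Delta_\sigma$ and $\Gamma_\sigma$ are closed under $\tau \mapsto \tau \cup \{v\}$, i.e., both are cones with apex $v$ (when nonempty). Either via the standard contracting cochain homotopy $e_\tau \mapsto \pm e_{\tau \setminus \{v\}}$ (and $0$ if $v \notin \tau$), which is manifestly fine-graded and well-defined on $P^\ast_\alpha$, or via the long exact sequence of the pair applied to the two contractible cones, one gets $H^i(\Delta_\sigma, \Gamma_\sigma; \K) = 0$; the sub-case in which $\Gamma_\sigma$ is the void complex (which occurs iff $\sigma \notin \Gamma$) reduces to the acyclicity of the cone $\Delta_\sigma$ itself. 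Summing over all $\alpha$ then yields the proposition. The only real difficulty is notational: the sign bookkeeping identifying $P^\ast_\alpha$ with the augmented relative cochain complex, and the careful separation of cases according to whether $\sigma \in \Gamma$; but this is routine once the complex $(\Delta_\sigma, \Gamma_\sigma)$ has been set up.
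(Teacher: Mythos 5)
Your proof is correct and follows essentially the same approach as the paper: decompose $P^\ast$ by the fine grading, identify each graded summand $P^\ast_\alpha$ with a (co)chain complex of the pair of stars $\mathrm{st}_\sigma\Psi = (\mathrm{st}_\sigma\Delta,\,\mathrm{st}_\sigma\Gamma)$ where $\sigma=\mathrm{supp}(\alpha)$, and invoke contractibility of stars. The paper phrases the identification via the \v{C}ech complex of the open-star cover and is terser on acyclicity (simply asserting that stars are contractible), whereas you make the cone structure of both $\Delta_\sigma$ and $\Gamma_\sigma$ with apex $v\in\sigma$ explicit and separate the case $\sigma\notin\Gamma$ where $\Gamma_\sigma$ is void---a useful elaboration of the same argument rather than a different route.
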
 
\begin{proof}
	Each $P^i$ inherits the fine grading of $\K [\Psi]$, and the differential $d^i$ has (fine) degree $0$. Thus, as a complex of vector spaces over $\K $, $P^\ast$ splits into the direct sum of its fine-graded pieces $P^\ast_\alpha$, and this induces a corresponding decomposition of the homology.
	
	Consider a monomial $x^\alpha \in \K [\Psi]$: for any $\tau\in\Delta$, $x^\alpha$ is in $\K [\mathrm{st}_\tau\Psi]$ if and only if $\tau \cup \mathrm{supp}(\alpha)$ is a face of $\Delta$ but not of $\Gamma$, or equivalently if $\tau$ is in the first but not the second member of the pair $\mathrm{st}_{\mathrm{supp}(\alpha)}\Psi$. 
	
	Using this, we see $P^\ast_\alpha$ is a \v{C}ech complex of $\mathrm{st}_{\mathrm{supp}(\alpha)}\Psi$, which is acyclic if $\mathrm{supp}(\alpha)\neq\emptyset$ because stars of faces are contractible. For $x^\alpha = 1$, the complex computes the cohomology of the pair $(\Delta,\Gamma)$.
\end{proof}

The general method is to use this together with the Koszul complex: we form the double complex $P^\ast(\Psi) \otimes K^\ast(\Theta)$ for $\Theta$ some generic sequence of linear forms in $\K [\Delta]$, and perform homological calculations on the resulting double complex.

The same idea works with other complexes of $\K [\Delta]$-modules constructed to compute the cohomology. An example can be seen in \Cref{thm:subdivisions_almost_lefschetz}. Some of this paper can be generalized in such a direction, but we refrain from introducing more formalism at this point.

\subsection{Reisner's Theorem}
Reisner's theorem \cite{Reisner} gives a link between the algebra of face rings and the topology of simplicial complexes: it tells us a complex has a Cohen-Macaulay face ring exactly when it is sufficiently well-connected, both locally and globally. 

\begin{thm}[Reisner]
	Let $\Psi=(\Delta,\Gamma)$ be a relative complex. Then $\K [\Psi]$ is Cohen-Macaulay if and only if for all faces $\tau$ of $\Delta$ (including $\tau=\emptyset$):
	\[{H}^{i}(\mathrm{lk}_{\tau}\Psi;\K )=0\quad\forall -1\le i<\dim(\Psi) - \dim(\tau).\]
\end{thm}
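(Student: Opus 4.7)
The plan is to induct on $\dim(\Psi)$ and establish both implications in parallel using the double complex
\[
 D^{i,j} \;=\; P^i(\Psi) \otimes_{\K[\Delta]} K^j(\Theta),
\]
where $\Theta=(\theta_1,\ldots,\theta_n)$ is a generic linear sequence in $\K[\Delta]_1$ of length $n=\dim(\Psi)+1$. By \Cref{thm:Koszul}, $\K[\Psi]$ is Cohen-Macaulay exactly when $H^j(K^\ast(\Theta)\otimes\K[\Psi])=0$ for $j<n$, and this Koszul complex is column $i=-1$ of $D$. For $i\ge 0$, column $i$ is $\bigoplus_{\tau\in\Delta^{(i)}} K^\ast(\Theta)\otimes\K[\mathrm{st}_\tau\Psi]$; since $\mathrm{st}_\tau\Psi=\tau\ast\mathrm{lk}_\tau\Psi$, the ring $\K[\mathrm{st}_\tau\Psi]$ is a polynomial extension of $\K[\mathrm{lk}_\tau\Psi]$ in the variables of $\tau$, so Cohen-Macaulayness (and vanishing of Koszul homology in the relevant range) transfers freely between star and link. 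Meanwhile $K^j$ is $\K[\Delta]$-free of rank $\binom{n}{j}$, hence each row of $D$ consists of $\binom{n}{j}$ copies of the partition complex $P^\ast(\Psi)$ and, by \Cref{prop:partition_homology}, its $i$-th cohomology is the same number of copies of $H^i(\Psi;\K)$.

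For the backward implication, the cohomology hypothesis at each $\tau\neq\emptyset$ restricts verbatim to $\mathrm{lk}_\tau\Psi$ (whose sub-links are further links of $\Psi$ at larger faces), so by induction every $\K[\mathrm{lk}_\tau\Psi]$, and therefore every $\K[\mathrm{st}_\tau\Psi]$, is Cohen-Macaulay. Thus columns $i\ge 0$ are Koszul-acyclic in the desired range, and \Cref{lem:exactness} applied to columns makes $\mathrm{Tot}(D^{\ast\ge 0,\ast})$ acyclic in low degrees. The hypothesis at $\tau=\emptyset$ gives the corresponding vanishing of the row cohomologies, and \Cref{lem:exactness} applied to rows makes $\mathrm{Tot}(D)$ acyclic in the same range. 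The mapping-cone short exact sequence
\[
 0\to \mathrm{Tot}(D^{\ast\ge 0,\ast})^{\ast-1} \to \mathrm{Tot}(D)^{\ast-1} \to K^\ast(\Theta)\otimes\K[\Psi] \to 0
\]
provided by \Cref{Tot_mapping_cone}, together with its long exact sequence, then forces $H^j(K^\ast(\Theta)\otimes\K[\Psi])=0$ for $j<n$, which is the Cohen-Macaulay condition.

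For the forward implication, Cohen-Macaulayness of $\K[\Psi]$ gives column $-1$ acyclicity in low degrees directly. Running the same mapping-cone LES in reverse, once the columns $i\ge 0$ are also known to vanish (i.e., once every star is established to be Cohen-Macaulay), one gets acyclicity of $\mathrm{Tot}(D)$ in the relevant range, and via the row computation this is exactly $H^i(\Psi;\K)=0$ in that range, handling the $\tau=\emptyset$ case. The remaining $\tau\neq\emptyset$ cases follow by applying the theorem inductively to the strictly-lower-dimensional complexes $\mathrm{lk}_v\Psi$. The main obstacle I anticipate is precisely this bootstrap in the forward direction: unlike the cohomology hypothesis, Cohen-Macaulayness does not obviously descend from $\K[\Psi]$ to the stars, and establishing it requires either a secondary double-complex induction on the faces of $\Psi$ (peeling off a vertex at a time, so that each $\K[\mathrm{st}_\tau\Psi]$ inherits a Koszul-vanishing range from $\K[\Psi]$) or the standard localization argument identifying $\K[\Delta][x_v^{-1}]$ with a localization of $\K[\mathrm{st}_v\Delta]$. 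A secondary nuisance is that for non-pure or genuinely relative $\Psi$ the h.s.o.p.\ length of a star may drop below $n$, so the matching of vanishing ranges between star and link has to be tracked with care.
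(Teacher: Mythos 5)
Your argument for the ``if'' direction (topological Cohen--Macaulayness implies algebraic Cohen--Macaulayness) is essentially the paper's: same double complex $P^\ast(\Psi)\otimes K^\ast(\Theta)$, same mapping cone of $\mathfrak{R}^{-1}$, same dimension induction passing Cohen--Macaulayness from links to stars (the paper uses an explicit coning lemma where you invoke the polynomial-extension structure $\K[\mathrm{st}_\tau\Psi]\simeq\K[\tau]\otimes_\K\K[\mathrm{lk}_\tau\Psi]$; these are the same fact). One small imprecision: the vanishing ranges you obtain from rows and from truncated columns are not literally ``the same'' --- row exactness gives $H^k(\mathrm{Tot}(D))=0$ for $k<d$, while the truncated columns give $H^k(\mathrm{Tot}(D^{\ast\ge 0,\ast}))=0$ for $k<d+1$ --- but plugging both into the mapping-cone long exact sequence does force $H^j(K^\ast(\Theta)\otimes\K[\Psi])=0$ for all $j\le d$, which is exactly what is needed, so the argument goes through. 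You should state the two ranges precisely rather than declaring them equal.

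For the converse you diagnose the problem correctly yourself: the strategy needs the stars of $\Psi$ already known to be Cohen--Macaulay before the column half of the argument can be run, and Cohen--Macaulayness of $\K[\Psi]$ does not descend to stars without new input. Neither of the two suggestions you make is carried out, so as written your ``forward implication'' is a sketch with an acknowledged gap rather than a proof. This is not a defect relative to the paper: the paper explicitly restricts itself to the ``if'' direction and never proves the converse, so you are attempting strictly more. The classical fix is indeed the localization identification of $\K[\Delta]$ at $x_v$ with $\K[\mathrm{st}_v\Delta]$ at $x_v$ (or, equivalently, local cohomology and Hochster's formula); if you want a self-contained treatment in the spirit of the partition complex you would need to supply an analogue of that step.
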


We shall give a new and elementary proof of this theorem here, though for simplicity we restrict to the "if" direction critical for us.

\begin{rem}
	In the non-relative setting, the right-hand side of the last inequality is sometimes replaced by $\dim(\mathrm{lk}_\tau\Delta)$. This is never larger than $\dim(\Delta)-\dim(\tau)$, so our condition is stronger. They are equivalent in the non-relative setting, modulo the topological fact that a (non-relative) complex satisfying the weaker condition is pure. However, this is not true in the relative case. An example is given by 
	
	\begin{tikzpicture}
	\coordinate [label=below:$v_1$] (v1) at (0*3,0*3);
	\coordinate [label=above:$v_2$] (v2) at (0*3,1*3);
	\coordinate [label=left:$v_3$] (v3) at (-0.7*3, 0.5*3);
	\coordinate [label=$v_4$] (v4) at (0.7*3, 0.5*3);
	\coordinate [label=right:$v_5$] (v5) at (1.1*3,0.6*3);
	\fill[blue!50] (v1) -- (v2) -- (v3) -- cycle;
	\fill[gray!60] (v1) -- (v2) -- (v4) -- cycle;
	\draw[blue, line width=0.4mm] (v1) -- (v2);
	\draw[blue, line width=0.4mm] (v1) -- (v3);
	\draw[blue, line width=0.4mm] (v3) -- (v2);

	\draw[blue, line width=0.4mm] (v1) -- (v4);
	\draw[blue, line width=0.4mm] (v4) -- (v2);

	\draw[blue, line width=0.4mm] (v4) -- (v5);
	
	\foreach \point in {v1,v2,v3,v4,v5}
	\fill [blue,opacity=1] (\point) circle (2pt);
	
	\end{tikzpicture}
	
	where the complex is relative to the gray (right) triangle.
	
	The stronger assumption ensures purity, because if $\tau \in \Psi$ has lower than maximal dimension then $\mathrm{lk}_\tau\Psi$ cannot have nontrivial cohomology in dimension $-1$, so $\mathrm{lk}_\tau\Psi \neq (\{\emptyset\},\emptyset)$ (note it is automatic that $\mathrm{lk}_\tau(\Gamma)=\emptyset$ since $\tau\in(\Delta,\Gamma)$). Thus $\Psi$ has a face properly containing $\tau$.
\end{rem}
Often, one writes ``$\Delta$ is Cohen-Macaulay" and means whichever of the two equivalent conditions is more convenient for the situation at hand. In this section, to indicate the condition on $\K [\Delta]$ we write that $\Delta$ is algebraically Cohen-Macaulay, and to indicate the homological condition we write that $\Delta$ is topologically Cohen-Macaulay. 

Here is a simple application of Reisner's theorem first mentioned in \Cref{sec:basic_idea}, the importance of which is difficult to overstate: if $M$ is a triangulated manifold, the star of each face is Cohen-Macaulay.

\begin{rem}\label[remark]{Macaulay}
A second, more combinatorial application is of course that of face numbers of simplicial complexes. Notice that the face numbers of a simplicial complex are a non-negative combination of its $h$-numbers, and that these are nonnegative for Cohen-Macaulay complexes by Reisner's theorem (as they correspond to dimensions of vectorspaces).

On the other hand, the $h$-numbers also cannot be too large, as a polynomial ring cannot grow too quickly. For instance, a polynomial ring on $n$ variables cannot generate, in degree $k$, a space of dimension larger than $\binom{n+k-1}{j}$. Macaulay \cite{Macaulay} used this to provide a complete characterization of face numbers of Cohen-Macaulay complexes.
\end{rem}

We begin with the following observations.
\begin{lem}
	If $\Psi$ is topologically Cohen-Macaulay and $\tau\in\Psi$ then $\mathrm{lk}_\tau \Psi$ is topologically Cohen-Macaulay.
\end{lem}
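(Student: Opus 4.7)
The plan is to reduce the Cohen-Macaulay condition for $\mathrm{lk}_\tau\Psi$ to the Cohen-Macaulay condition for $\Psi$ via the identity ``link of a link is a link''. Everything should fall out from dimension bookkeeping.

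First I would record the basic identity: if $\tau\in\Psi$ and $\sigma$ is a face of $\mathrm{lk}_\tau\Delta$, then $\sigma\cap\tau=\emptyset$, and a direct check on each component of the pair shows
\[\mathrm{lk}_\sigma(\mathrm{lk}_\tau\Psi)\ =\ \mathrm{lk}_{\sigma\cup\tau}\Psi.\]
Indeed, on the $\Delta$-component, $\rho\in \mathrm{lk}_\sigma(\mathrm{lk}_\tau\Delta)$ unwinds to: $\rho$ is disjoint from $\sigma\cup\tau$ and $\rho\cup\sigma\cup\tau\in\Delta$, which is exactly $\rho\in\mathrm{lk}_{\sigma\cup\tau}\Delta$; likewise for $\Gamma$.

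Next, to verify that $\mathrm{lk}_\tau\Psi$ is topologically Cohen-Macaulay, I must show that for every face $\sigma$ of $\mathrm{lk}_\tau\Delta$,
\[H^i(\mathrm{lk}_\sigma(\mathrm{lk}_\tau\Psi);\K)\ =\ 0\quad\forall\,-1\le i<\dim(\mathrm{lk}_\tau\Psi)-\dim(\sigma).\]
Using the identity above, this becomes a vanishing statement for $H^i(\mathrm{lk}_{\sigma\cup\tau}\Psi;\K)$. The hypothesis that $\Psi$ is topologically Cohen-Macaulay, applied at the face $\sigma\cup\tau\in\Delta$, gives
\[H^i(\mathrm{lk}_{\sigma\cup\tau}\Psi;\K)\ =\ 0\quad\forall\,-1\le i<\dim(\Psi)-\dim(\sigma\cup\tau).\]
Since $\sigma\cap\tau=\emptyset$ we have $\dim(\sigma\cup\tau)=\dim(\sigma)+\dim(\tau)+1$, while the basic bound $\dim(\mathrm{lk}_\tau\Psi)\le \dim(\Psi)-\dim(\tau)-1$ (obtained by joining a maximal face of the link with $\tau$) yields
\[\dim(\mathrm{lk}_\tau\Psi)-\dim(\sigma)\ \le\ \dim(\Psi)-\dim(\sigma\cup\tau),\]
so the range we need is contained in the range the hypothesis supplies.

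There is no real obstacle; the only things to be careful about are (i) keeping track of the pair in the relative setting when verifying the link-of-link identity, and (ii) noting the edge case $\sigma=\emptyset$, where the statement just reproduces the hypothesis on $\Psi$ at the face $\tau$.
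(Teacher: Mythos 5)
Your proof is correct and follows essentially the same route as the paper: identify $\mathrm{lk}_\sigma(\mathrm{lk}_\tau\Psi)$ with $\mathrm{lk}_{\sigma\cup\tau}\Psi$ componentwise on the pair, then invoke the hypothesis at $\sigma\cup\tau$. You are in fact slightly more careful than the paper, which elides the dimension comparison $\dim(\mathrm{lk}_\tau\Psi)-\dim(\sigma)\le\dim(\Psi)-\dim(\sigma\cup\tau)$ that you spell out.
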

\begin{proof}
	Let $\sigma$ be a face of $\mathrm{lk}_{\tau}\Psi$. We can verify directly that $\mathrm{lk}_{\sigma}(\mathrm{lk}_{\tau}\Psi)$ is the link of $\sigma\cup\tau$ in $\Psi=(\Delta,\Gamma)$: it suffices to verify this in $\Delta$ and in $\Gamma$ separately. So by assumption its homology vanishes beneath the top dimension.
\end{proof}
\begin{lem}\label[lemma]{coning}
	A relative simplicial complex $\Psi=(\Delta,\Gamma)$ is algebraically Cohen-Macaulay if and only if the cone $v\ast\Psi=(v\ast\Delta,v\ast\Gamma)$ is algebraically Cohen-Macaulay.
\end{lem}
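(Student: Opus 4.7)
The plan is to reduce the statement to the algebraic identification $\K[v*\Psi] \cong \K[\Psi][x_v]$ as a polynomial ring in one new variable over $\K[\Psi]$. This holds because a subset $S \subset \{v\}\cup\Delta^{(0)}$ is a face of $v*\Delta$ iff $S\setminus\{v\}$ is a face of $\Delta$, so the minimal non-faces of $v*\Delta$ are precisely those of $\Delta$. Thus $I_{v*\Delta} = I_\Delta\cdot\K[x_v,x_w : w \in \Delta^{(0)}]$ and similarly $I_{v*\Gamma} = I_\Gamma\cdot\K[x_v,x_w : w \in \Delta^{(0)}]$; taking the quotient gives the desired identification. In particular $x_v$ is a nonzerodivisor on $\K[v*\Psi]$ (being a free polynomial variable), with $\K[v*\Psi]/(x_v) \cong \K[\Psi]$, and the required h.s.o.p. length increases by exactly one, from $\dim(\Psi)+1$ to $\dim(v*\Psi)+1 = \dim(\Psi)+2$.

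For the forward direction, let $\theta_1,\ldots,\theta_{n+1}$ (with $n+1 = \dim(\Psi)+1$) be a regular h.s.o.p.\ on $\K[\Psi]$ and view each $\theta_i$ inside $\K[v*\Psi] = \K[\Psi][x_v]$ via the inclusion $\K[\Delta]\hookrightarrow\K[v*\Delta]$. Then $(x_v, \theta_1, \ldots, \theta_{n+1})$ is a regular sequence on $\K[v*\Psi]$: $x_v$ is regular by the polynomial ring structure, and after modding out $x_v$ we recover the assumed regular sequence on $\K[\Psi]$. The final quotient is $\K[\Psi]/\langle\theta_1,\ldots,\theta_{n+1}\rangle$, which is finite-dimensional, and the length matches $\dim(v*\Psi)+1$, so this is a regular h.s.o.p.\ and $\K[v*\Psi]$ is Cohen-Macaulay.

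For the converse, I want to run the same argument in reverse. The nonzerodivisor $x_v$ on $\K[v*\Psi]$ can be taken as the first element of a maximal regular sequence in the irrelevant ideal: the inductive construction in the proof of \Cref{thm:Koszul} chooses at each step an arbitrary nonzerodivisor in the ideal, and by \Cref{cor:CM_choice} the maximal length is $\dim(v*\Psi)+1 = \dim(\Psi)+2$ because $\K[v*\Psi]$ is Cohen-Macaulay. So we obtain a regular sequence $(x_v, \theta_1, \ldots, \theta_{n+1})$, and the tail $\theta_1,\ldots,\theta_{n+1}$ is a regular sequence on $\K[v*\Psi]/(x_v) \cong \K[\Psi]$ of length $\dim(\Psi)+1$. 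Since $\K[\Psi]/\langle\theta_1,\ldots,\theta_{n+1}\rangle = \K[v*\Psi]/\langle x_v, \theta_1,\ldots,\theta_{n+1}\rangle$ is finite-dimensional (being the quotient of $\K[v*\Psi]$ by an h.s.o.p.), this tail is in fact a regular h.s.o.p.\ on $\K[\Psi]$. The only real subtlety is justifying that the nonzerodivisor $x_v$ can be prepended to a maximal regular sequence, and this is immediate from the proof of \Cref{thm:Koszul}; once the polynomial-ring identification $\K[v*\Psi]\cong\K[\Psi][x_v]$ is established, everything else is a clean transfer of regular sequences across this extension.
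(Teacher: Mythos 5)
Your proof is correct and follows essentially the same route as the paper's: both rest on the observation that $x_v$ is a nonzerodivisor on $\K[v\ast\Psi]$ with quotient $\K[\Psi]$, then transfer regular sequences across the extension, invoking \Cref{cor:CM_choice} to prepend $x_v$ to a regular h.s.o.p.\ in the converse direction. Your explicit identification $\K[v\ast\Psi]\cong\K[\Psi][x_v]$ is a slightly cleaner way of packaging what the paper states more tersely; both treatments rely on the fact, noted earlier in the paper, that for a Cohen-Macaulay module any maximal regular sequence of the right length is automatically an h.s.o.p.
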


\begin{proof}
	The result follows from the fact that $x_v$ is a nonzerodivisor on $M=\K [(v\ast\Delta,v\ast\Gamma)]$ and $M/\langle x_v\rangle M = \K [\Psi] \eqqcolon N.$
	
	In one direction, if $N$ is algebraically Cohen-Macaulay with $\Theta=(\theta_1,\ldots,\theta_n)$ a regular sequence of parameters, then so is $M$ with the sequence $(x_v,\theta_1,\ldots,\theta_n)$. On the other hand, if $M$ is Cohen-Macaulay, then by \Cref{cor:CM_choice} it also has a regular sequence of parameters $(x_v,\theta_1,\ldots,\theta_n)$, and so $N$ is Cohen-Macaulay.
\end{proof}

This is all we need in order to prove the first half of Reisner's theorem.
\begin{thm}
	If $\Psi=(\Delta,\Gamma)$ is topologically Cohen-Macaulay then it is algebraically Cohen-Macaulay.
\end{thm}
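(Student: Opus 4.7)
The plan is induction on $\dim(\Psi)$. For every non-empty $\tau \in \Delta$ the link $\mathrm{lk}_\tau \Psi$ is again topologically Cohen-Macaulay (by the preceding lemma) and of strictly smaller dimension, so by the inductive hypothesis $\K[\mathrm{lk}_\tau \Psi]$ is algebraically Cohen-Macaulay; iterating \Cref{coning} over the vertices of $\tau$ then promotes this to the star, giving that $\K[\mathrm{st}_\tau \Psi] = \K[\tau \ast \mathrm{lk}_\tau \Psi]$ is Cohen-Macaulay for every non-empty face. The base case of small dimension is direct.

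Set $d = \dim(\Psi) + 1$ and pick $\Theta = (\theta_1, \ldots, \theta_d)$ a generic sequence of linear forms in $\K[\Delta]$. Since there are only finitely many faces $\tau$, genericity together with \Cref{thm:Koszul} lets us simultaneously arrange that $\Theta$ is an h.s.o.p.\ on $\K[\Psi]$ and that its image is a regular h.s.o.p.\ on each $\K[\mathrm{st}_\tau \Psi]$ with $\tau \ne \emptyset$. Form the double complex
\[ C^{i, j} \;=\; P^i(\Psi) \otimes_{\K[\Delta]} K^j(\Theta). \]
For $i \ge 0$ the column $C^{i, \ast}$ decomposes as $\bigoplus_{\tau \in \Delta^{(i)}} K^\ast(\Theta) \otimes \K[\mathrm{st}_\tau \Psi]$, so by the Cohen-Macaulay reduction and \Cref{thm:Koszul} it is acyclic in all degrees below $d$. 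Each row $C^{\ast, j}$ is isomorphic to $P^\ast(\Psi)^{\binom{d}{j}}$, since $K^j$ is $\K[\Delta]$-free of rank $\binom{d}{j}$; hence by \Cref{prop:partition_homology} its cohomology is $H^\ast(\Psi; \K)^{\binom{d}{j}}$, which the topological Cohen-Macaulay hypothesis at $\tau = \emptyset$ kills below degree $d - 1$. Feeding these into \Cref{lem:exactness} yields
\[ H^k(\mathrm{Tot}(C^{\ast \ge 0, \ast})) = 0 \ \text{ for }\ k < d, \qquad H^k(\mathrm{Tot}(C)) = 0 \ \text{ for }\ k < d - 1. \]

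The quantity we actually want is the Koszul homology of $\K[\Psi]$ itself, which sits in the $(-1)$-st column $C^{-1, \ast} \simeq K^\ast(\Theta) \otimes \K[\Psi]$. The mapping-cone identification $M(\mathfrak{R}^{-1}) \simeq \mathrm{Tot}(C)^{\ast - 1}$ from \Cref{Tot_mapping_cone} produces a long exact sequence containing
\[ H^{k-1}(\mathrm{Tot}(C)) \longrightarrow H^k(C^{-1, \ast}) \longrightarrow H^k(\mathrm{Tot}(C^{\ast \ge 0, \ast})), \]
and for $k < d$ both flanking groups vanish by the previous step. Hence $H^k(K^\ast(\Theta) \otimes \K[\Psi]) = 0$ throughout this range, and \Cref{thm:Koszul} guarantees that a generic perturbation of $\Theta$ gives a regular sequence of length $d$ on $\K[\Psi]$; since such a perturbation remains an h.s.o.p., $\K[\Psi]$ is Cohen-Macaulay.

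The main difficulty I expect is purely the bookkeeping of indices: aligning the $(-1)$-indexing of the partition complex with the $0$-indexing of the Koszul complex in the shift of \Cref{Tot_mapping_cone}, so that the row-vanishing range ``$k < d - 1$'' and the column-vanishing range ``$k < d$'' combine to give acyclicity of $K^\ast(\Theta) \otimes \K[\Psi]$ in the full range $k < d$. No new conceptual ingredient is needed beyond what is already in place.
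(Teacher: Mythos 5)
Your proof is correct and takes essentially the same route as the paper: both set up the double complex $P^\ast(\Psi)\otimes K^\ast(\Theta)$, use the coning lemma and the inductive hypothesis to make every column with $i\ge 0$ acyclic below the top position, use \Cref{prop:partition_homology} for the rows, and then combine the two vanishing ranges through the mapping cone of $\mathfrak{R}^{-1}$ and \Cref{lem:exactness} to kill the Koszul homology of $\K[\Psi]$ below the top degree, finishing with \Cref{thm:Koszul}. The only divergence is cosmetic: you set $d=\dim(\Psi)+1$, whereas the paper keeps $d=\dim(\Psi)$ and works with $d+1$ linear forms; after this shift your vanishing ranges and the long exact sequence excerpt line up exactly with the paper's.
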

\begin{proof}
	By induction on dimension. Consider first a non-relative complex $\Delta$ of dimension $d=0$. Then $\K [\Delta]\simeq \K [x_1,\ldots,x_n]/\langle x_i x_j \mid i \neq j\rangle$ for some $n$, and $\K [\Delta]/\langle \sum_i x_i \rangle$ is finite-dimensional over $\K $: each monomial of degree $2$ vanishes in the quotient, since $x_j ^2 = x_j \cdot \sum_j x_j$. Further, $\sum_i x_i$ is a nonzerodivisor: each nonvanishing element of $\K [\Delta]$ is of the form $c_0 + \sum_i c_i x_i ^{t_i}$ with $c_0,\ldots,c_n\in \K$, and its product by $\sum_i x_i$ does not vanish. 
	
	In the relative case, suppose $d=\dim(\Psi)=0$. Then unless $\Gamma=\emptyset$ (the non-relative situation) we have $\K [\Psi]=\langle x_v \mid v\in\Psi \rangle$, and again quotienting by the ideal generated by $\sum_{v\in\Psi} x_v$ (a nonzerodivisor) gives a finite dimensional vector space.
	
	Suppose the theorem holds for relative complexes of dimension up to $d-1$, and suppose $\Psi$ is topologically Cohen-Macaulay of dimension $d$. Then the link of each face $\tau$ is algebraically Cohen-Macaulay of dimension $d - \dim(\tau)$, and by the coning lemma $\mathrm{st}_\tau\Psi$ is algebraically Cohen-Macaulay as well.
	
	Let $\Theta$ be a generic sequence of $d+1$ linear forms in $\K [\Delta]$, and consider $C^{\ast,\ast}=P^\ast(\Psi) \otimes K^\ast(\Theta)$. By \Cref{Tot_mapping_cone} the mapping cone of $\mathfrak{R}^{-1}$ gives us a short exact sequence
	\[0\rightarrow \mathrm{Tot}(C^{\ast\ge 0, \ast})^{\ast-1} \rightarrow \mathrm{Tot}(C^{\ast\ge -1, \ast})^{\ast-1}=\mathrm{Tot}(C^{\ast,\ast})^{\ast-1} \rightarrow C^{-1,\ast}\rightarrow 0.\]
	
	The corresponding long exact sequence is
	\begin{align*} 
	 \ldots & \rightarrow H^i(C^{-1,\ast}) \rightarrow H^i(\mathrm{Tot}(C^{\ast\ge 0, \ast})^\ast) \rightarrow H^i(\mathrm{Tot}(C^{\ast,\ast})^\ast) \rightarrow H^{i+1}(C^{-1,\ast}) \rightarrow 
	 \\
	 \ldots & \rightarrow H^{d-1}(\mathrm{Tot}(C^{\ast,\ast})^\ast)
	  \rightarrow H^d(C^{-1,\ast}) \rightarrow H^d(\mathrm{Tot}(C^{\ast\ge 0, \ast})^\ast)  \rightarrow \ldots
	\end{align*}
	Notice that if $k < d$ then $H^{k-i}(C^{\ast,i})=0$: for $i<0$ this is automatic because $C^{\ast,i}$ is the $0$ complex, and otherwise it occurs because the row $C^{\ast,i}=P^\ast \otimes K^i(\Theta)$ is exact until the $d$-th place, since $P^\ast$ is (and $K^i$ is a free module). By \Cref{lem:exactness}, this implies $H^k(\mathrm{Tot}(C^{\ast,\ast})^\ast)=0$ for $k<d$. 
	
	This implies the maps $C^{-1,\ast}\rightarrow\mathrm{Tot}(C^{\ast\ge0,\ast})^\ast$ induce isomorphisms on the $k$-th homology for all $k<d$, and an injection on $H^{d}$.
	
	Since the star of each face is algebraically Cohen-Macaulay, each column of $C^{\ast\ge 0,\ast}$ has 
	$H^i(C^{j,\ast})=0$ for all $i<d+1$, so another application of \Cref{lem:exactness} proves $H^i(\mathrm{Tot}(C^{\ast\ge0,\ast})^\ast)=0$ for $i<d+1$ as well. This implies that $H^i(C^{-1,\ast})=H^i(\K[\Psi]\otimes K^\ast(\Theta))=0$ for all such $i$, and the theorem follows. 
\end{proof}

%

\section{Partition of Unity}

Let $\Psi=(\Delta,\Gamma)$ be a triangulated $d$-manifold
with boundary, and let $\Gamma=\emptyset$ or the boundary subcomplex
(that the boundary always is in fact a subcomplex is a simple result
in topology). Then for each $\tau\in\Delta$, $\mathrm{st}_{\tau}\Psi$
is Cohen-Macaulay of dimension $d$. This is the situation in which
the partition complex is most useful: computations of interest can
be reduced to the homology of $\mathrm{Tot}(P^\ast\otimes K^\ast)$.
To motivate the computation of this homology we begin with an application,
the partition of unity theorem.

For a generic sequence of linear elements $\Theta=(\theta_{1},\ldots,\theta_{d+1})$
in $\K [\Delta]$, we wish to understand $P^\ast(\Psi)/\left\langle \Theta\right\rangle $.
The partition of unity theorem computes the dimension of the kernel
of the first differential of this complex, i.e. of
\[
\K [\Psi]/\left\langle \Theta\right\rangle \rightarrow\bigoplus_{v\in\Delta^{(0)}}\K [\mathrm{st}_{v}\Psi]/\left\langle \Theta\right\rangle ,
\]
in terms of $H^\ast(\Delta,\Gamma)$. 

Here is a special case: if $(\Delta,\emptyset)$ is a sphere
then this map is injective except in the top degree $d+1$, where
the kernel has dimension one (and in fact equals the entire $(d+1)$-th
graded piece). 

We state the theorem in slightly greater generality, recovering a key result of \cite{AHL}.
\begin{thm}
	[partition of unity] Let $\Psi=(\Delta,\Gamma)$ be a
	relative complex, pure of dimension $d$, such that $\mathrm{st}_{\tau}\Psi$
	is Cohen-Macaulay for each nonempty $\tau\in\Delta$. Let $\Theta$
	be a generic sequence of $d+1$ elements in $\K [\Delta]_{1}$.
	Then
	\[
	\dim_{\K }H^{i}(P^\ast/\left\langle \Theta\right\rangle )_{j}={d+1 \choose j}\cdot\dim_{\K }H^{i+j}(\Psi;\K ).
	\]
\end{thm}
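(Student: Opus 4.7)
Proof Plan.

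I form the double complex
\[
C^{\ast,\ast} \;=\; P^\ast(\Psi)\otimes_{\K[\Delta]} K^\ast(\Theta),
\]
equipped with the partition complex differential horizontally and the Koszul differential vertically, and compute $H^\ast(\mathrm{Tot}(C))$ in two different ways. Throughout I use the convention that each wedge generator $e_s$ of $K^\ast$ has internal degree $-1$, so that the Koszul differential preserves internal degree.

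\emph{Row-first computation.} Each row $C^{\ast,j}=P^\ast\otimes K^j$ is a direct sum of $\binom{d+1}{j}$ internally-shifted copies of $P^\ast$, whose horizontal cohomology is $H^i(\Psi;\K)\otimes_\K \bigwedge^j \K^{d+1}$ by \Cref{prop:partition_homology}. Because $H^\ast(\Psi;\K)$ sits in coarse degree $0$ while each $\theta_s$ has positive degree, $\Theta$ acts as zero on $H^\ast(\Psi;\K)$, so the induced $d_1$ vanishes. Higher differentials cannot connect nonzero cells because row $j$ contributes only in internal degree $-j$. The spectral sequence collapses at $E_1$, giving
\[
\dim_\K H^N(\mathrm{Tot}(C))_n \;=\; \binom{d+1}{-n}\cdot\dim_\K H^{N+n}(\Psi;\K).
\]

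\emph{Column-first computation.} For each nonempty $\tau$, the hypothesis that $\mathrm{st}_\tau\Psi$ is Cohen-Macaulay together with \Cref{thm:Koszul} implies that the generic linear sequence $\Theta$ of length $d+1$ is a regular sequence on $\K[\mathrm{st}_\tau\Psi]$. Hence each column $C^{i,\ast}$ with $i\ge 0$ has Koszul cohomology concentrated at the top row $j=d+1$, equal to $P^i/\langle\Theta\rangle P^i$ with an internal-grading shift of $d+1$. The column $C^{-1,\ast}$ contributes the Koszul cohomology of $\K[\Psi]$. The induced differential $d_1$ along the top row is exactly the differential of $P^\ast/\langle\Theta\rangle$; a grid check (any higher $d_r$ with $r\ge 2$ from column $-1$ into the top row would require $r=q-d\le 1$, impossible) shows no further nonzero differentials exist. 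For $N\ge d$, the Koszul-cohomology-of-$\K[\Psi]$ contributions live in total degrees $N\le d-1$ and do not appear, leaving
\[
H^N(\mathrm{Tot}(C))_n \;=\; H^{N-d-1}(P^\ast/\langle\Theta\rangle)_{n+d+1}.
\]

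\emph{Conclusion.} Equating the two expressions, substituting $i=N-d-1$ and $j=n+d+1$, and applying the symmetry $\binom{d+1}{d+1-j}=\binom{d+1}{j}$ produces the stated formula for all $i\ge -1$. The main obstacle will be the grading bookkeeping—specifically aligning the $(d+1)$-shift from top Koszul cohomology with the internal grading of $P^\ast/\langle\Theta\rangle$, then invoking the binomial symmetry—but the heart of the argument is a transparent two-ways computation for a double complex, and the range $i\ge -1$ is exactly where the Koszul cohomology of $\K[\Psi]$ (which is not controlled by the hypothesis) drops out of the comparison.
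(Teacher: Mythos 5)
Your proposal is correct and follows essentially the same approach as the paper: compute $H^\ast(\mathrm{Tot}(P^\ast\otimes K^\ast))$ two ways, using on one side the horizontal acyclicity of $P^\ast$ in positive internal degrees (so only a single row survives in each fixed internal degree, and Koszul differentials between surviving rows vanish for degree reasons), and on the other side the Cohen-Macaulayness of each $\mathrm{st}_\tau\Psi$ to concentrate all vertical cohomology of columns $i\ge 0$ in the top Koszul row, leaving $P^\ast/\langle\Theta\rangle$ there and the uncontrolled Koszul cohomology of $\K[\Psi]$ isolated in column $-1$, where it lands in total degree $\le d-1$ and so does not interfere with the range $i\ge -1$. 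The paper encodes this same two-way comparison via mapping cones ($\mathfrak{U}^{d+1}$, $\mathfrak{R}^\dagger$) and the augmented Koszul complex rather than spectral-sequence language, and grades $e_S$ in degree $d+1-|S|$ rather than $-|S|$ so the binomial-coefficient symmetry enters at a slightly different point, but these differences are cosmetic.
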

Complexes satisfying the conditions of the theorem are called Buchsbaum complexes.
Note the subscript $j$ indicates the $j$-th graded piece is being
taken, where the grading is induced from $\K [\Delta]$. 
\begin{proof}
	Recall $H^{d+1}(\Theta)=\K [\Psi]/\left\langle \Theta\right\rangle $:
	this is shown in \Cref{sec:top_cohomology}. Define $\tilde{K}^\ast(\Theta)$ to be the \emph{augmented
		Koszul complex}.
	This is the complex
	\[
	K^{0}(\Theta)\rightarrow\ldots\rightarrow K^{d+1}(\Theta)\rightarrow\tilde{K}^{d+2}(\Theta)=H^{d+1}(\Theta)=\K [\Delta]/\left\langle \Theta\right\rangle \rightarrow0,
	\]
	where the map $\tilde{K}^{d+1}(\Theta)\rightarrow H^{d+1}(\Theta)$
	is the natural quotient map $K^{d+1}\rightarrow\nicefrac{K^{d+1}}{\mathrm{im}(K^{d})}$.

	We will work with the complex $C^{\ast,\ast}=P^\ast\otimes\tilde{K}^\ast(\Theta)$.
	
	Since the last nonzero map of $\tilde{K}^\ast$ is the cokernel of the map before it, $H^{d+1}(\tilde{K}^\ast\otimes M)=H^{d+2}(\tilde{K}^\ast\otimes M)=0$
	for any $\K [\Delta]$-module $M$. Therefore the Cohen-Macaulayness
	of the stars of $\Psi$ implies $\tilde{K}^\ast(\Theta)\otimes \K [\mathrm{st}_{\tau}\Psi]$
	is exact for each $\tau\in\Delta$ (homologies up to the $d$-th position
	are equal for the augmented and un-augmented complex). In particular,
	the columns $C^{i,\ast}$ are exact for each $i\ge0$. The column $C^{-1,\ast}=\K[\Psi]\otimes \tilde{K}^\ast(\Theta)$
	is only exact if $\Psi$ is Cohen-Macaulay. 
	
	Consider the map $\mathfrak{U}^{d+1}:\mathrm{Tot}(C^{\ast,\ast\le d+1})^\ast\rightarrow C^{\ast-d-1,d+2}$.
	The exact sequence associated to its mapping cone is
	\[
	0\rightarrow C^{\ast-d-2,d+2}\rightarrow\mathrm{Tot}(C^{\ast,\ast\le d+2})^\ast=\mathrm{Tot}(C^{\ast,\ast})^\ast\rightarrow\mathrm{Tot}(C^{\ast,\ast\le d+1})^\ast\rightarrow0.\tag{{*}}
	\]
	Note that $H^{d-i}(C^{\ast,i})=H^{d+1-i}(C^{i,\ast})=0$
	for all $i$. For $i\ge0$ this is automatic, since the entire $i$-th
	column is exact. For $i=-1$, it follows from the fact that $H^{d+1}(C^{-1,\ast})=H^{d+2}(C^{-1,\ast})=0$,
	as both are equal to the respective homologies of $\tilde{K}^\ast$.
	Therefore \Cref{lem:exactness} implies 
	\[
	H^{d}(\mathrm{Tot}(C^{\ast,\ast})^\ast)=H^{d+1}(\mathrm{Tot}(C^{\ast,\ast})^\ast)=0.
	\]
	Similarly, $H^{t}(\mathrm{Tot}(C^{\ast,\ast})^\ast)$
	for all $t>d+1$, since the $(-1)$-th column is not involved.
	Therefore for all $t\ge d$ the short sequence $(*)$ above
	yields the exact sequences in homology
	\[
	\ldots0\rightarrow H^{t}(\mathrm{Tot}(C^{\ast,\ast\le d+1})^\ast)\rightarrow H^{t+1}(C^{\ast-d-2,d+2})\rightarrow0,
	\]
	where the zeros on the left and right end are the $t$ and $(t+1)$-th
	homologies of $\mathrm{Tot}(C^{\ast,\ast})^\ast$ respectively. 
	
	Set $s=i+d+2$. Since the row $C^{\ast,d+2}$ is $P^\ast/\left\langle \Theta\right\rangle $,
	$H^{s}(C^{\ast-d-2,d+2})=H^{i}(P^\ast/\left\langle \Theta\right\rangle )$.
	We now know it is isomorphic to 
	\[
	H^{s-1}(\mathrm{Tot}(C^{\ast,\ast\le d+1})^\ast)=H^{i+d+1}(\mathrm{Tot}(C^{\ast,\ast\le d+1})^\ast),
	\]
	where $C^{\ast,\ast\le d+1}\simeq P^\ast\otimes K^\ast(\Theta)$.
	The claim is now reduced to \Cref{thm:total_partition_complex}. From
	its statement we conclude the identity
	\begin{align*}
	\dim_{\K }H^{i}(P^\ast/\left\langle \Theta\right\rangle )_{j} &=\dim_{\K }H^{i+d+1}(\mathrm{Tot}(P^\ast\otimes K^\ast)^\ast)_{j} \\
	&={d+1 \choose j}\cdot\dim_{\K }H^{i+j}(\Psi;\K ).
	\end{align*}
\end{proof}

\subsection{{\texorpdfstring{The homology of $\mathrm{Tot}(P^\ast\otimes K^\ast)$}{Homology of the total complex}}}

As earlier, $\Psi=(\Delta,\Gamma)$ is relative complex,
pure of dimension $d$, in which the star of each $\tau\in\Delta$
is Cohen-Macaulay. Let $\Theta=(\theta_{1},\ldots,\theta_{d+1})$
a generic sequence in $\K [\Delta]_{1}$ and denote $C^{\ast,\ast}=P^\ast(\Psi)\otimes K^\ast(\Theta)$.

\subsubsection{Grading the double complex}

We wish to understand the homology of $\mathrm{Tot}(C^{\ast,\ast})$
as a graded $\K [\Delta]$-module. It is convenient to grade
each $C^{i,j}$ such that the differentials of the double complex
are maps of graded modules, or in other words take homogeneous elements
to homogeneous elements of the same degree.

We grade $K^\ast$ as follows. Each $K^{i}$ has a basis of the form
\[\left\{ e_{j_{1}}\wedge\ldots e_{j_{i}}\mid j_{1}<j_{2}<\ldots<j_{i}\right\} .\]
Set each such basis element to be of degree $d+1-i$. In particular,
$K^{d+1}\simeq \K [\Delta]$ is a free module with basis
$e_{1}\wedge e_{2}\wedge\ldots\wedge e_{d+1}$, and this basis element
has degree $0$. Similarly, $K^{0}=\K [\Delta]$ is a free
module with basis $\left\{ 1\right\} $, re-graded so that $1$ has
degree $d+1$ (the common notation for this is $K^{0}(\Theta)=\K [\Delta](-d-1)$).

Thus if $\alpha\in \K [\Delta]_{t}$ is homogeneous of degree
$t$, $\alpha\cdot e_{j_{1}}\wedge\ldots\wedge e_{j_{i}}$ has degree
$t+d+1-i$. The differential, $z\mapsto(\sum\theta_{i}e_{i})\wedge z$,
is then a map of graded modules. This induces the required grading
on $P^\ast\otimes K^\ast$. More generally it induces a grading on $K^\ast\otimes M$
for $M$ any graded $\K [\Delta]$-module, in which $m\cdot e_{j_{1}}\wedge\ldots\wedge e_{j_{i}}$
has degree $t+d+1-i$ for any $m\in M_{t}$.

Since the kernel, image, and cokernel of a map of graded modules are
graded modules, there is an induced grading on the homology of the
rows, columns, and total complex of $C^{\ast,\ast}$. In fact, if we only
want to compute dimensions, these can be computed separately for each
graded piece of $C^{\ast,\ast}$ (no longer a $\K [\Delta]$-module
but a $\K $-vector space).

\subsubsection{{\texorpdfstring{The grading and $P^\ast/\left\langle \theta\right\rangle $}{The grading and the reduced partition complex}}}

Our choice of grading for $K^\ast$ is motivated by the following consideration.
The augmented complex $\tilde{K}^\ast$ has $\tilde{K}^{d+1}\simeq \K [\Delta]$
(the isomorphism now being of graded modules), and the image of $\tilde{K}^{d}$
in $\tilde{K}^{d+1}$ is precisely $\left\langle \Theta\right\rangle $.
Thus the cokernel $\tilde{K}^{d+2}=H^{d+1}(K^\ast)$ of
$\tilde{K}^{d}\rightarrow\tilde{K}^{d+1}$ is precisely $\K [\Delta]/\left\langle \Theta\right\rangle $,
and has the same grading.

These considerations apply just as well when $\tilde{K}^\ast$ is replaced
with $\tilde{K}^\ast\otimes M$ for $M$ a graded $\K [\Delta]$-module.
Hence all computations on $P^\ast(\Psi)/\left\langle \Theta\right\rangle $
performed using these methods respect its grading automatically.

\subsubsection{Computation of the homology}

With the above grading in hand, we can compute $H^{i}(\mathrm{Tot}(C^{\ast,\ast}))_{j}$
for any $i,j$.
\begin{thm}
	\label[theorem]{thm:total_partition_complex}For any $i$ and $j$,
	we have \[\dim_{\K }H^{i}(\mathrm{Tot}(C^{\ast,\ast}))_{j}={d+1 \choose j}\cdot\dim_{\K }H^{i+j-d-1}(\Psi;\K ).\]
\end{thm}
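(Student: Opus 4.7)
The plan is to compute the horizontal cohomology of the double complex $C^{\ast,\ast}$ row by row, and then stitch the rows together into the total complex using the mapping cone short exact sequences of \Cref{Tot_mapping_cone}. First, for each $s$, the row $C^{\ast,s} = P^\ast \otimes K^s$ is a direct sum of $\binom{d+1}{s}$ copies of $P^\ast$, one for each basis element $e_{j_1}\wedge\cdots\wedge e_{j_s}$ of $K^s$, each shifted in graded degree by $d+1-s$. By \Cref{prop:partition_homology} we have $H^i(P^\ast) = H^i(\Psi;\K)$, and the fine-grading argument in its proof shows this cohomology is concentrated in graded degree $0$ (only the piece $x^\alpha=1$ contributes, since stars of nonempty faces are contractible). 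Consequently,
\[\dim_\K H^i(C^{\ast,s})_t \;=\; \binom{d+1}{s}\cdot\dim_\K H^i(\Psi;\K)\cdot\delta_{t,\,d+1-s}.\]

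Next, I would induct on $s$ to prove the splitting
\[H^k(\mathrm{Tot}(C^{\ast,\ast\le s})) \;=\; \bigoplus_{r=0}^{s} H^{k-r}(C^{\ast,r}).\]
The base $s=-1$ is trivial. For the inductive step, \Cref{Tot_mapping_cone} applied to $\mathfrak{U}^s$ yields the short exact sequence
\[0 \to C^{\ast-s-1,\,s+1} \to \mathrm{Tot}(C^{\ast,\ast\le s+1}) \to \mathrm{Tot}(C^{\ast,\ast\le s}) \to 0.\]
In the associated long exact sequence, the connecting homomorphism $H^k(\mathrm{Tot}(C^{\ast,\ast\le s})) \to H^{k-s}(C^{\ast,s+1})$ is induced by $\mathfrak{U}^s$, hence has graded degree $0$ (the grading on $K^\ast$ was set up precisely so that $d^v$ preserves graded degree). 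By the inductive hypothesis together with the previous step, the source is concentrated in graded degrees $\{d+1-r : 0 \le r \le s\}$, i.e.\ in degrees at least $d+1-s$, whereas the target is concentrated in the single graded degree $d-s$. These two sets are disjoint, so the connecting map vanishes and the long exact sequence splits.

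Setting $s = d+1$ and passing to the graded piece of degree $j$ in the resulting splitting
\[H^i(\mathrm{Tot}(C^{\ast,\ast})) \;=\; \bigoplus_{r=0}^{d+1} H^{i-r}(C^{\ast,r}),\]
only the summand with $r = d+1-j$ has nonzero graded piece of degree $j$, giving
\[\dim_\K H^i(\mathrm{Tot}(C^{\ast,\ast}))_j \;=\; \binom{d+1}{d+1-j}\dim_\K H^{i+j-d-1}(\Psi;\K) \;=\; \binom{d+1}{j}\dim_\K H^{i+j-d-1}(\Psi;\K),\]
as asserted.

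The main obstacle is establishing that the connecting homomorphism in the inductive step vanishes; in spectral-sequence language this is the degeneration at $E_2$ of the row-filtration spectral sequence, and the graded-degree disjointness argument above, enabled by the carefully chosen grading on $K^\ast$ and the degree-$0$ concentration of $H^\ast(P^\ast)$, replaces that machinery with an elementary check that fits entirely within the mapping-cone toolbox developed earlier in the paper.
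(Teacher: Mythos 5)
Your proof is correct, and it takes a noticeably different route than the paper's, though it is built from the same underlying observation: the chosen grading places the cohomology of row $s$ entirely in graded degree $d+1-s$. The paper first fixes a graded degree $j$, where only the row $s=d+1-j$ can carry homology (lower rows are identically zero in degree $j$, higher rows are exact), and then applies the transposed map $(\mathfrak{R}^\dagger)^s : C_j^{\ast,s} \to \mathrm{Tot}(C^{\ast,\ast\ge s+1})_j$ once, getting the isomorphism with $H^i(C_j^{\ast,s})$ from the resulting long exact sequence since the complementary truncation is exact; the final identification with the full total complex is immediate because the rows below $s$ vanish in degree $j$. You instead keep all internal degrees simultaneously and build up $\mathrm{Tot}(C^{\ast,\ast\le s})$ one row at a time with $\mathfrak{U}^s$, using the disjointness of the graded supports of source and target to kill the connecting homomorphism, which yields the full splitting $H^k(\mathrm{Tot}(C^{\ast,\ast})) \simeq \bigoplus_r H^{k-r}(C^{\ast,r})$ and then reads off the degree-$j$ piece. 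Your approach proves a slightly stronger, degree-independent decomposition and is the elementary version of an $E_2$-degeneration argument; the paper's version is a quicker one-shot calculation after the reduction to a fixed degree. Either fits cleanly within the mapping-cone framework of \Cref{Tot_mapping_cone}, and both give the stated formula.
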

\begin{proof}
	As often happens, the proof writes itself once the correct short exact
	sequence of complexes is found. In the interest of making this process
	as transparent as possible, let us begin by explicitly unraveling
	the degrees and indices involved. 
	
	Observe that for $j\in\mathbb{Z}$ any module in the row $C_{j}^{\ast,s}$
	is spanned by elements of the form $\alpha\cdot e_{j_{1}}\wedge\ldots\wedge e_{j_{s}}$,
	where $\alpha\in \K [\mathrm{st}_{\tau}\Psi]$ and
	\[
	\deg(\alpha)+d+1-s=j,
	\]
	or $\deg(\alpha)=j+s-d-1$.
	
	We now recall \Cref{prop:partition_homology}. Its proof shows $P_{t}^\ast$
	is exact for all $t>0$.
	This means the row $C_{j}^{\ast,s}$ is exact unless $d+1-j-s=0$, so
	the interesting row is $s=d+1-j$. In any lower row $s'<s$, the complex
	$C_{j}^{\ast,\ast}$ vanishes entirely: elements there are sums of expressions
	$\alpha\cdot e_{j_{1}}\wedge\ldots\wedge e_{j_{s'}}$, where $\alpha\in \K [\mathrm{st}_{\tau}\Psi]$
	has negative degree. That the lowest row is also the only one carrying
	nontrivial homology suggests relating $H^\ast(C^{\ast,s})_{j}$
	with $H^\ast(\mathrm{Tot}(C^{\ast,\ast\ge s})^\ast)_{j}$
	and with $H^\ast(\mathrm{Tot}(C^{\ast,\ast\ge s+1})^\ast)_{j}=0$. 
	
	Taking the transpose $(\mathfrak{R}^{i})^{\dagger}$ of
	$\mathfrak{R}^{i}$ (exchanging the two upper indices, along with
	the vertical and horizontal differentials) provides a map
	\[
	(\mathfrak{R}^{\dagger})^{s}:C_{j}^{\ast,s}\rightarrow\mathrm{Tot}(C^{\ast,\ast\ge s+1})_{j}^{\ast+s+1},
	\]
	with mapping cone $\mathrm{Tot}(C^{\ast,\ast\ge s})_{j}^{\ast+s}$.
	Thus there is a long exact sequence in homology:
	\[
	\ldots\rightarrow H^i(\mathrm{Tot}(C^{\ast,\ast\ge s+1})_{j}^{\ast+s})\rightarrow H^{i}(\mathrm{Tot}(C^{\ast,\ast\ge s})_{j}^{\ast+s})\rightarrow
	\]
	\[
	H^{i}(C_{j}^{\ast,s})\rightarrow H^{i+1}(\mathrm{Tot}(C^{\ast,\ast\ge s+1})_{j}^{\ast+s})\rightarrow\ldots
	\]
	where $\mathrm{Tot}(C^{\ast,\ast\ge s+1})_{j}^{\ast+s}$ is exact.
	We obtain isomorphisms 
	\[
	H^{i+s}(\mathrm{Tot}(C^{\ast,\ast})_{j}^\ast)=H^{i}(\mathrm{Tot}(C^{\ast,\ast\ge s})_{j}^{\ast+s})\simeq H^{i}(C_{j}^{\ast,s}),
	\]
	or
	\[
	H^{i}(\mathrm{Tot}(C^{\ast,\ast})_{j}^\ast)=H^{i-s}(C_{j}^{\ast,s}).
	\]
	Since $s=d+1-j$, the right hand side is \[H^{i+j-d-1}(P^\ast\otimes K^{d+1-j})=H^{i+j-d-1}(P^\ast)\otimes K^{d+1-j},\]
	which we know by \Cref{prop:partition_homology} to have dimension
	\[
	{d+1 \choose j}\cdot\dim_{\K }H^{i+j-d-1}(\Psi;\K )
	\]
	as claimed.
\end{proof}

\section{Schenzel's Formula}\label[section]{sec:Schenzel}

We now know enough to compute $\dim_{\K }(\K [\Psi]/\left\langle \Theta\right\rangle )_{j}$
for $\Psi$ a $d$-dimensional triangulated manifold with boundary. This is a rather powerful fact, and we recover a formula of Schenzel \cite{Schenzel81, Schenzel82}, which allows one to generalize the characterization of face numbers to manifolds and Buchsbaum complexes.

As in the previous section, $\Gamma$ may be the boundary subcomplex
or $\emptyset$, independently of whether $\Delta$ has a nonempty
boundary. The double complex $C^{\ast,\ast}$ is $P^\ast(\Psi)\otimes K^\ast(\Theta)$.

Recall that if $\Psi=(\Delta,\Gamma)$ is Cohen-Macaulay,
$\dim_{\K }(\K [\Psi]/\left\langle \Theta\right\rangle )_{j}=h_{j}(\Psi)$
is an entry of the $h$-vector, and hence determined by the $f$-vector.
For general $\Psi$ this is not the case. To perform the computation
we use the map $\mathfrak{R}^{-1}:C^{-1,\ast}\rightarrow\mathrm{Tot}(C^{\ast\ge0,\ast})^\ast$
as in the proof of Reisner's theorem, obtaining once again the long
exact sequence \begin{align*}  	 \ldots & \rightarrow H^i(C^{-1,\ast}) \rightarrow H^i(\mathrm{Tot}(C^{\ast\ge 0, \ast})^\ast) \rightarrow H^i(\mathrm{Tot}(C^{\ast,\ast})^\ast) \rightarrow H^{i+1}(C^{-1,\ast}) \rightarrow  	 \\ 	 \ldots & \rightarrow H^{d-1}(\mathrm{Tot}(C^{\ast,\ast})^\ast) 	  \rightarrow H^d(C^{-1,\ast}) \rightarrow H^d(\mathrm{Tot}(C^{\ast\ge 0, \ast})^\ast)  \rightarrow \ldots 	\end{align*}We
know $H^{i}(\mathrm{Tot}(C^{\ast\ge0,\ast})^\ast)=0$
for each $i\le d$, since columns with nonnegative indices are exact
until the $(d+1)$-th place. This gives isomorphisms
\[
H^{i-1}(\mathrm{Tot}(C^{\ast,\ast})^\ast)_{j}\simeq H^{i}(C^{-1,\ast})_{j}
\]
for each $i\le d$, so
\[
\dim_{\K }H^{i}(C^{-1,\ast})_{j}={d+1 \choose j}\cdot\dim_{\K }H^{i+j-d-2}(\Psi;\K )
\]
for such $i$. Now, it is a general fact that
\[
\sum_{i=0}^{d+1}(-1)^{d+1-i}\dim_{\K }H^{i}(C^{-1,\ast})_{j}=\sum_{i=0}^{d+1}(-1)^{d+1-i}\dim_{\K }(C^{-1,i})_{j},\tag{{*}}
\]
these being two expressions for the Euler characteristic of the complex,
multiplied by $(-1)^{d+1}$. The left hand side of $(*)$
is
\[
\dim_{\K }H^{d+1}(C^{-1,\ast})_j+\sum_{i=0}^{d}(-1)^{d+1-i}\dim_{\K }H^{i}(C^{-1,\ast})_{j}
\]
\[
=\dim_{\K }(\K [\Delta,\Gamma]/\left\langle \Theta\right\rangle )_{j}+{d+1 \choose j}\sum_{i=0}^{d}(-1)^{d+1-i}\dim_{\K }H^{i+j-d-2}(\Delta,\Gamma;\K)
\]
\[
=\dim_{\K }(\K [\Delta,\Gamma]/\left\langle \Theta\right\rangle )_{j}+{d+1 \choose j}\sum_{i=0}^{j-2}(-1)^{i+j+1}\dim_{\K }H^{i}(\Delta,\Gamma;\K)
\]
by our homological computations above. Note that in the Cohen-Macaulay
case this is just $\dim_{\K }(\K [\Psi]/\left\langle \Theta\right\rangle )_{j}$.

To compute the right hand side of $(*)$, note each $C^{-1,i}=\K [\Psi]\otimes K^{i}(\Theta)$
is fully known. In fact the alternating sum yields $h_{j}(\Psi)$:
this is easiest to compute using the fact that the Hilbert series
of $C^{-1,i}$ is 
\[
{d+1 \choose i}x^{d+1-i}H_{\K [\Psi]}(x).
\]
Substituting, we find
\[
\dim_{\K }(\K [\Psi]/\left\langle \Theta\right\rangle )_{j}=h_{j}(\Psi)+{d+1 \choose j}\sum_{i=0}^{j-2}(-1)^{i+j}\cdot\dim_{\K }H^{i}(\Psi;\K ),
\]
a formula originally due to Schenzel.
\begin{example}
	The unique minimal triangulation $\Delta$ of the torus has $7$ vertices,
	$21$ edges, and $14$ triangles. Thus the Hilbert series of its
	face ring is
	\[
	\frac{(1-x)^{3}+7x(1-x)^{2}+21x^{2}(1-x)+14x^{3}}{(1-x)^{3}}=\frac{1+4x+10x^{2}-x^{3}}{(1-x)^{3}},
	\]
	and the $h$-vector is $(1,4,10,-1)$. Recall the dimensions
	of the cohomology groups are $0,2,1$ in dimensions $0,1,2$ respectively.
	Schenzel's formula tells us that for $\Theta$ a generic linear system
	of parameters, the dimensions of the graded pieces of $\K [\Delta]/\left\langle \Theta\right\rangle $
	are $1,\ 4,\ 10$ and $1$.
\end{example}

\section{Poincar\'{e} duality}\label[section]{sec:Poincare}

Let $\Delta$ be a closed, connected, and orientable triangulated
manifold, $\Theta$ a linear system of parameters. We shall prove
a certain ring associated with $\Delta$ is a Poincar\'e duality
algebra: in the special case of $\Delta$ a sphere, this algebra is
just $\K[\Delta]/\langle \Theta\rangle $. 

The theorem requires a little preparation. 

\subsection{Poincar\'e duality algebras in general}
\begin{defn}
	A finitely generated graded $\K$-algebra $A$ is a Poincar\'e duality
	algebra of degree $n$ if:
	\begin{enumerate}
		\item $A_{i}=0$ unless $0\le i\le n$,
		\item $A_{n}\simeq \K$, 
		\item For any $0\le i\le n$ the multiplication map induces a non-degenerate
		bilinear pairing
		\[
		A_{i}\times A_{n-i}\rightarrow A_{n}.
		\]
	\end{enumerate}
\end{defn}
The last statement means that for any nonzero $x\in A_{i}$ there
exists a $y\in A_{n-i}$ such that $xy\in A_{n}$ is nonzero. This
implies that $A_{i}\simeq A_{n-i}$ as vector spaces over $\K$.

The case in which $A$ is generated in degree $1$ is particularly
nice.
\begin{lem}
	\label[lemma]{lem:Poincare_deg_1}Let $A$ be a finitely generated
	graded $\K$-algebra generated in degree $1$. Then the following conditions
	are equivalent.
	\begin{enumerate}
		\item $A$ is a Poincar\'e duality algebra of degree $n$.
		\item $A$ vanishes above degree $n$, and $\left\{ a\in A\mid ax=0\text{ for all }x\in A_{1}\right\} =A_{n}$.
	\end{enumerate}
\end{lem}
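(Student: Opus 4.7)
Both directions hinge on the observation that because $A$ is generated in degree $1$, if $a \in A$ satisfies $a \cdot A_1 = 0$ then $a \cdot A_k = 0$ for every $k \ge 1$, since each $A_k$ is spanned by $k$-fold products of degree-one elements. Thus condition~(2) is saying that the socle $\mathrm{ann}_A(A_{\ge 1})$ is concentrated in the top graded piece $A_n$, which reduces everything to a condition involving only $A_1$.

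For $(1) \Rightarrow (2)$ I would read off the conditions directly from the PD definition. The vanishing of $A_i$ for $i > n$ is part of the definition, and the inclusion $A_n \subseteq \mathrm{ann}_A(A_1)$ is automatic because $A_{n+1}=0$. For the converse inclusion, let $a \in A_i$ be homogeneous with $0 \le i < n$ and $aA_1 = 0$; the observation above gives $aA_{n-i} = 0$, and this contradicts nondegeneracy of the pairing $A_i \times A_{n-i} \to A_n$ unless $a=0$.

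For $(2) \Rightarrow (1)$ the main work is extracting nondegeneracy of each pairing from the socle hypothesis. Given a nonzero homogeneous $a \in A_i$ with $0 \le i < n$, consider the principal ideal $aA$, a nonzero graded submodule of $A_{\le n}$, and let $k$ be the largest index with $(aA)_k \ne 0$. Any $b \in (aA)_k$ satisfies $bA_1 \subseteq (aA)_{k+1} = 0$, so $b$ lies in the socle, which by (2) equals $A_n$; hence $k=n$ and $aA_{n-i} = (aA)_n \ne 0$, providing a $y \in A_{n-i}$ with $ay \ne 0$. This gives nondegeneracy on the $A_i$-side; symmetry (applying the same argument with $n-i$ in place of $i$) gives nondegeneracy on the $A_{n-i}$-side.

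The delicate point I expect to require the most care is the orientation clause $A_n \simeq \K$. The set-equality in (2) places the socle in degree $n$ but, taken in isolation, does not force $A_n$ to be one-dimensional (consider $A = \K[x,y]/(x,y)^2$); in the paper's intended applications $A_n \simeq \K$ is manifest from the geometric origin of $A$, serving as the orientation class of the underlying closed, orientable manifold, so the lemma is most naturally read with this one-dimensionality bundled into condition~(2). Once it is in hand, the nondegenerate pairings produced above automatically become perfect, completing the verification of the PD property.
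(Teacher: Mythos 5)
Your proof is correct. For $(2) \Rightarrow (1)$ you take a slightly different route than the paper: you consider the top nonvanishing graded piece of the principal ideal $aA$, observe that its elements are annihilated by $A_1$ and hence lie in the socle $A_n$, and conclude $(aA)_n = aA_{n-i} \neq 0$. The paper instead runs a descending induction on $j$, at each step finding some $y \in A_1$ with $xy \neq 0$ (as otherwise $x$ would lie in $A_n \cap A_j = 0$) and applying the inductive hypothesis to $xy \in A_{j+1}$. Both arguments amount to repeatedly multiplying by degree-$1$ elements until degree $n$ is reached; yours packages this into a single observation about the ideal $aA$, the paper's into an explicit induction. The $(1) \Rightarrow (2)$ direction is essentially identical in both, resting on the fact that $aA_1 = 0$ forces $aA_k = 0$ for all $k \geq 1$ when $A$ is generated in degree $1$.

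Your remark about $A_n \simeq \K$ identifies a genuine imprecision in the statement of the lemma and in the paper's own proof. Condition (2) as written does not force $\dim_\K A_n = 1$: your example $A = \K[x,y]/(x,y)^2$ (with $n = 1$) satisfies (2) but fails to be a Poincar\'e duality algebra, since $\dim_\K A_1 = 2$. The paper's proof of $(2) \Rightarrow (1)$ verifies only the nondegeneracy of the pairings $A_i \times A_{n-i} \to A_n$ and never establishes that $A_n \simeq \K$, which indeed cannot be deduced from (2) alone. The lemma should therefore be read with $\dim_\K A_n = 1$ bundled into condition (2). In the paper's application (\Cref{thm:pd}), the identification $B_{d+1} \simeq \K$ is established separately via Schenzel's formula before the lemma is invoked, so nothing downstream is affected, but the statement of the lemma as an unconditional equivalence is not quite right.
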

\begin{proof}
	If $A$ is a Poincar\'e duality algebra of degree $n$ and $i<n$,
	let $a\in A_{i}$. There is a $y\in A_{n-i}$ such that $ay\neq0$.
	Since $A$ is generated in degree $1$, $y$ is an expression in degree-$1$
	elements, and one of these has a nonzero product with $a$. Thus for
	any $x\in A$, if $xA_{1}=0$ then the degree-$i$ part of $x$ is
	zero for all $i<n$, or in other words $x\in A_{n}$.
	
	If $A$ vanishes above degree $n$ and $\left\{ a\in A\mid ax=0\text{ for all }x\in A_{1}\right\} =A_{n}$,
	let us show each $x\in A_{j}$ has some $y\in A_{n-j}$ such that
	$xy\neq0$ by descending induction on $j$. For $j=n-1$, this is
	just the assumption: if $x\in A_{n-1}$ has $xy=0$ for all $y\in A_{1}$
	then $x\in A_{n}$, but then $x\in A_{n-1}\cap A_{n}=0$.
	
	Suppose the statement is known for some $j\ge1$ and let $x\in A_{j-1}$.
	Then again there is some $y\in A_{1}$ such that $xy\neq0$ as before,
	and $xy\in A_{j}$. Thus there is a $z\in A_{n-j}$ such that $(xy)z\neq0$.
	The product $x(yz)$ is nonzero and in $A_{n}$.
\end{proof}

\subsection{Poincar\'e duality for face rings of manifolds}\label[section]{Poincare}

Let $\Delta$ be a triangulation of a closed, connected, orientable
manifold of dimension $d$, and let $\Theta=(\theta_{1},\ldots,\theta_{d+1})$
be a generic sequence in $\K[\Delta]_{1}$.

Observe that $\K[\Delta]/\langle \Theta\rangle $
vanishes above degree $d+1$, and the degree $d+1$ part has dimension
$1$, by Schenzel's formula and the fact that $h_{d+1}(\Delta)=(-1)^{d}\chi(\Delta)$.
However, $\K[\Delta]/\langle \Theta\rangle $
is generally not a Poincar\'e duality algebra. For instance, above
we used Schenzel's formula to show $\dim \K[\Delta]_{1}\neq\dim \K[\Delta]_{2}$
for $\Delta$ the minimal triangulation of a torus. 

For convenience, we denote
\[
A\coloneqq A(\Delta)\coloneqq \K[\Delta]/\langle \Theta\rangle ,
\]
\[
A(\mathrm{st}_{v}\Delta)\coloneqq \K[\mathrm{st}_{v}\Delta]/\langle \Theta\rangle ,
\]
and similarly $A(\mathrm{st}_{v}^{\circ}\Delta)=\K[\mathrm{st}_{v}\Delta,\mathrm{lk}_{v}\Delta]/\langle \Theta\rangle $.

Let us cut straight to the point of the idea, so as not to lose the
forest for the trees. Details are provided further in this section.
The goal is to introduce a nice quotient $B=B(\Delta)$
of $A$ in which Poincar\'e duality holds. Its defining properties
are the following: 
\begin{enumerate}
	\item \label[property]{property:prop_1}$B$ is a graded quotient of $A$,
	and $B_{d+1}\simeq A_{d+1}\simeq \K$.
	\item \label[property]{property:prop_2}For each $v\in\Delta^{(0)}$
	the restriction $A(\Delta)\rightarrow A(\mathrm{st}_{v}\Delta)$
	factors through $B$. In particular restriction maps $B\rightarrow A(\mathrm{st}_{v}\Delta)$
	are defined.
	\item \label[property]{property:prop_3}For each $\alpha\in B$ of degree
	less than $d$, there is some $v\in\Delta^{(0)}$ such
	that the restriction of $\alpha$ to $A(\mathrm{st}_{v}\Delta)$
	is nonzero. 
	\item \label[property]{property:prop_4}For each $v\in\Delta^{(0)}$
	the composition $A(\mathrm{st}_{v}^{\circ}\Delta)\rightarrow A\rightarrow B$
	is injective.
\end{enumerate}
Once $B$ is constructed and the properties above are shown to hold,
the theorem is rather short. We need one last, basic lemma.
\begin{lem}
	[the cone lemma] For each $v\in\Delta^{(0)}$, there is
	an isomorphism
	\[
	A(\mathrm{st}_{v}\Delta)\rightarrow A(\mathrm{st}_{v}^{\circ}\Delta)
	\]
	\[
	\alpha\mapsto x_{v}\alpha.
	\]
\end{lem}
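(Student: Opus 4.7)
The plan is to factor the problem into two elementary steps: first identify the relative face module $\K[\mathrm{st}_v^\circ\Delta]$ with a principal ideal inside $\K[\mathrm{st}_v\Delta]$ before modding out by $\Theta$, then show that the multiplication-by-$x_v$ isomorphism descends to the quotients by $\langle\Theta\rangle$.

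First I would observe that $\K[\mathrm{st}_v\Delta,\mathrm{lk}_v\Delta] = \langle x_v\rangle \subset \K[\mathrm{st}_v\Delta]$. Indeed, a nonzero monomial in the relative face module corresponds to a face of $\mathrm{st}_v\Delta$ that is not a face of $\mathrm{lk}_v\Delta$; equivalently, a face of $\mathrm{st}_v\Delta$ containing $v$; equivalently a monomial divisible by $x_v$. Next, since $\mathrm{st}_v\Delta = v\ast\mathrm{lk}_v\Delta$ is a cone, we have $\K[\mathrm{st}_v\Delta]\simeq \K[x_v]\otimes_{\K} \K[\mathrm{lk}_v\Delta]$, so $x_v$ is a nonzerodivisor on $\K[\mathrm{st}_v\Delta]$ (this is the content already used in \Cref{coning}). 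Therefore multiplication by $x_v$ gives an isomorphism of $\K[\mathrm{st}_v\Delta]$-modules
\[
\K[\mathrm{st}_v\Delta] \xrightarrow{\;\cdot x_v\;} \K[\mathrm{st}_v\Delta,\mathrm{lk}_v\Delta].
\]

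It remains to show this descends to the quotients by $\langle\Theta\rangle$. Surjectivity is immediate: every element of $A(\mathrm{st}_v^\circ\Delta)$ is represented by some $x_v\alpha$, which is the image of $\alpha$. For injectivity, suppose $x_v\alpha \in \langle\Theta\rangle\cdot \K[\mathrm{st}_v\Delta,\mathrm{lk}_v\Delta]$. Since $\K[\mathrm{st}_v\Delta,\mathrm{lk}_v\Delta]=x_v\cdot\K[\mathrm{st}_v\Delta]$, we can write this expression as $x_v \sum_i \theta_i m_i$ with $m_i \in \K[\mathrm{st}_v\Delta]$. Because $x_v$ is a nonzerodivisor on $\K[\mathrm{st}_v\Delta]$, we conclude $\alpha = \sum_i \theta_i m_i \in \langle\Theta\rangle$, so $\alpha$ vanishes in $A(\mathrm{st}_v\Delta)$.

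There is no real obstacle here; the only place one must be slightly careful is in the identification of the relative face module with the principal ideal $\langle x_v\rangle$, which depends on the conventions fixed in the preliminaries. Everything else is a formal consequence of $x_v$ being a nonzerodivisor on the face ring of a cone.
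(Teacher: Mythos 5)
Your proof is correct and follows essentially the same route as the paper's: identify $\K[\mathrm{st}_v\Delta,\mathrm{lk}_v\Delta]$ with the principal ideal $\langle x_v\rangle\subset\K[\mathrm{st}_v\Delta]$, note that $x_v$ is a nonzerodivisor on the cone $\mathrm{st}_v\Delta$, and observe that the resulting module isomorphism carries $\langle\Theta\rangle\K[\mathrm{st}_v\Delta]$ onto $\langle\Theta\rangle\K[\mathrm{st}_v\Delta,\mathrm{lk}_v\Delta]$ and so descends to the quotients. You spell out the descent a bit more explicitly than the paper, but the content is identical.
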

\begin{proof}
	First consider $\mathrm{lk}_{v}\Delta$ as a subcomplex of $\mathrm{st}_{v}\Delta$.
	Each nonface of the link contains the vertex $v$, so the non-face
	ideal of the link is just $\langle x_{v}\rangle $. Thus
	$\K[\mathrm{st}_{v}\Delta,\mathrm{lk}_{v}\Delta]=\langle x_{v}\rangle /I_{\mathrm{st}_{v}\Delta}$,
	and the map
	\[
	\K[\mathrm{st}_{v}\Delta]\rightarrow\K[\mathrm{st}_{v}\Delta,\mathrm{lk}_{v}\Delta]
	\]
	\[
	\alpha\mapsto x_{v}\cdot\alpha
	\]
	is an isomorphism, since it is surjective and injective, $x_{v}$
	being a nonzerodivisor in $\K[\mathrm{st}_{v}\Delta]$. The lemma then
	follows since $A(\mathrm{st}_{v}\Delta)$ and $A(\mathrm{st}_{v}^{\circ}\Delta)$
	are quotients of these two isomorphic modules by their product with
	the same ideal $\langle \Theta\rangle $.
\end{proof}

The following goes back to unpublished early work of Hochster \cite{Stanley96}, with partial results in a series of papers of Novik and Swartz \cite{NS2}.

\begin{thm}[Poincar\'e duality]\label[theorem]{thm:pd}
	 The ring $B(\Delta)$ is a Poincar\'e
	duality algebra.
\end{thm}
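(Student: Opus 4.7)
The plan is to verify both conditions of \Cref{lem:Poincare_deg_1}: that $B$ vanishes above degree $d+1$, and that
\[
\{a \in B \mid a B_1 = 0\} = B_{d+1}.
\]
The first condition and the identification $B_{d+1} \simeq \K$ come directly from \Cref{property:prop_1}, and $B$ is generated in degree $1$ since it is a quotient of $A$. For the second condition, the inclusion $\supseteq$ is automatic from $B_i = 0$ for $i > d+1$. The real content is the reverse inclusion: every homogeneous $\alpha \in B$ of degree $k \le d$ with $\alpha B_1 = 0$ must vanish.

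The key step would be a local-to-global reduction, feeding \Cref{property:prop_2} and \Cref{property:prop_4} into the cone lemma. For each vertex $v$, the relation $\alpha x_v = 0$ in $B$ should be transferred to the open star as follows: the product $x_v \alpha$ lies in the principal ideal $\langle x_v \rangle \subset A$, and $\langle x_v\rangle$ is the image of the natural inclusion $A(\mathrm{st}_v^\circ \Delta) \hookrightarrow A$. By \Cref{property:prop_4}, this inclusion composed with $A \to B$ is injective, so $x_v \alpha$ already vanishes in $A(\mathrm{st}_v^\circ \Delta)$. The cone lemma then identifies $A(\mathrm{st}_v \Delta) \xrightarrow{\cdot x_v} A(\mathrm{st}_v^\circ \Delta)$ as an isomorphism, and under it the restriction $\alpha|_{\mathrm{st}_v \Delta}$ (well-defined by \Cref{property:prop_2}) maps to $x_v \alpha|_{\mathrm{st}_v \Delta} = 0$. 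Hence $\alpha|_{\mathrm{st}_v \Delta} = 0$ in $A(\mathrm{st}_v \Delta)$ for every vertex $v$.

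\Cref{property:prop_3}, in contrapositive, then closes the case $k < d$: an element of $B$ whose restriction to every star vanishes and whose degree is strictly below $d$ must be zero. The main obstacle I anticipate is the boundary case $k = d$, which \Cref{property:prop_3} does not literally cover. I expect this to be handled either by a mild strengthening of \Cref{property:prop_3} that is built into the explicit construction of $B$ (so restrictions to stars separate $B$ in all degrees below the top), or, alternatively, by exploiting that each $A(\mathrm{st}_v^\circ \Delta)$ is itself a Poincar\'e duality algebra of top degree $d+1$---as the relative face ring of a triangulated $d$-disk---which promotes the already-established vanishing of $x_v \alpha|_{\mathrm{st}_v}$ in $A(\mathrm{st}_v^\circ \Delta)$ into a statement strong enough to rule out a nonzero degree-$d$ class $\alpha$ globally. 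Either way, once the vanishing-on-stars conclusion is leveraged in all degrees $\le d$, the hypotheses of \Cref{lem:Poincare_deg_1} are met and $B$ is a Poincar\'e duality algebra of degree $d+1$.
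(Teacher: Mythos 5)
Your argument is essentially the paper's proof run in contrapositive: both hinge on identifying the composite $B \to A(\mathrm{st}_v\Delta) \to A(\mathrm{st}_v^\circ\Delta) \to B$ with multiplication by $x_v$, and both combine \Cref{property:prop_2}, \Cref{property:prop_3}, \Cref{property:prop_4} with the cone lemma to verify \Cref{lem:Poincare_deg_1}. Your concern about the boundary degree $k=d$ is a false alarm caused by a typo in the stated \Cref{property:prop_3}, which should read ``degree less than $d+1$''---the paper's own verification of it establishes the property for all degrees at most $d$, as the ideal $J$ is by definition the kernel of the restriction map in every degree below $d+1$---so your first proposed resolution is exactly right and the case split is unnecessary.
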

\begin{proof}
	Let $\alpha\in B_{i}$ for $i<d+1$. Then by \Cref{property:prop_3} of $B$
	there is a $v\in\Delta^{(0)}$ such that the restriction
	of $\alpha$ to $A(\mathrm{st}_{v}\Delta)$ is nonzero.
	Consider the maps
	\[
	B\rightarrow A(\mathrm{st}_{v}\Delta)\rightarrow A(\mathrm{st}_{v}^{\circ}\Delta)\rightarrow B,
	\]
	and denote their composition by $f$. Observe $f(\alpha)\neq0$
	is nonzero: the restriction of $\alpha$ to $A(\mathrm{st}_{v}\Delta)$
	is nonzero by assumption. The next map, to $A(\mathrm{st}_{v}^{\circ}\Delta)$,
	has no kernel because it is an isomorphism. The last map is injective
	by \Cref{property:prop_4}. 
	
	Since $f$ is a map of $\K[\Delta]$-modules,
	it is determined by the image of the generator $1$ of $B$.
	This image is $x_{v}$ by definition. Hence $f$ is just the multiplication
	map by $x_{v}$, and $x_{v}\alpha\neq0$. Since $B$ is generated
	in degree $1$ (it is a quotient of $A$, hence of $\K[\Delta]$),
	the theorem follows from \Cref{lem:Poincare_deg_1}.
\end{proof}
\begin{rem}
	Our notation $A,B$ for the rings in this section follows \cite{AHL}.
	However, that paper uses upper indices for the grading (lower indices
	take a different role there, denoting a dual object). We denote the
	grading with lower indices, as in the rest of this paper.
\end{rem}

\subsubsection{\texorpdfstring{The algebra $B(\Delta)$ and its properties}{The algebra B and its properties}}

In this section we contruct $B(\Delta)$ for $\Delta$
a triangulated manifold as above and prove the necessary properties
hold. The reader may wish to follow the proof for the case of $\Delta$
a sphere at first. In this case, $B(\Delta)=A(\Delta)$
and the only missing piece is \Cref{property:prop_4}, proved below in \Cref{prop:ost_A_inclusion}.
\begin{defn}
	The ideal $J=J(\Delta)\subset A(\Delta)$ is
	$0$ in degree $d+1$, and in degrees $i<d+1$ it is
	\[
	H^{-1}(P^\ast/\langle \Theta\rangle )_{i}=\ker\left[A(\Delta)\rightarrow\bigoplus_{v\in\Delta^{(0)}}A(\mathrm{st}_{v}\Delta)\right]_{i}.
	\]
	We define $B(\Delta)=A(\Delta)/J$.
\end{defn}
To show this is well defined, we need to prove $J$ is in fact an
ideal. Clearly it is a vector subspace in each degree. The product
of an element in the kernel of the restriction map to all vertex stars
and another element of $A(\Delta)$ is clearly again in
the kernel, so all that remains to show is that no such product has
a nonzero component in degree $d+1$. We show something stronger.
\begin{lem}
	Let $\alpha\in A$ such that the restriction of $\alpha$ to $A(\mathrm{st}_{v}\Delta)$
	vanishes for each $v\in\Delta^{(0)}$, and let $\beta\in A$
	be homogeneous of degree at least $1$. Then $\alpha\beta=0$.
\end{lem}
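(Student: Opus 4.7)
The plan is to reduce the claim to the single-variable case $\beta = x_v$ for a vertex $v$, and then to establish $x_v\alpha = 0$ in $A$ by picking a well-chosen representative of $\alpha$ in $\K[\Delta]$. Since $A$ is generated in degree $1$, any homogeneous $\beta$ of degree $\ge 1$ is a $\K$-linear combination of monomials $x_{v_1}\cdots x_{v_k}$ with $k\ge 1$, so once we know $x_v\alpha = 0$ in $A$ for every vertex $v$, associativity and linearity will give the general claim.

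Fix a vertex $v$. The restriction $A\to A(\mathrm{st}_v\Delta)$ is induced by the surjection $\K[\Delta]\twoheadrightarrow \K[\mathrm{st}_v\Delta]$, whose kernel is the relative face module $\K[\Delta,\mathrm{st}_v\Delta]$, spanned by those monomials $x^\gamma$ whose support is a face of $\Delta$ satisfying $\mathrm{supp}(\gamma)\cup\{v\}\notin\Delta$. Because $\alpha$ maps to zero in $A(\mathrm{st}_v\Delta)$, I would begin with any lift $\alpha_0\in\K[\Delta]$ of $\alpha$, write its image in $\K[\mathrm{st}_v\Delta]$ as $\sum_i \theta_i g_i$ with $g_i\in\K[\mathrm{st}_v\Delta]$, lift each $g_i$ arbitrarily to $\tilde g_i\in\K[\Delta]$, and replace $\alpha_0$ by $\tilde\alpha = \alpha_0 - \sum_i \theta_i\tilde g_i$. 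This modified lift represents the same class of $\alpha$ in $A$ but now sits inside $\K[\Delta,\mathrm{st}_v\Delta]$.

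The final step exploits the defining property of the relative face module: for any generating monomial $x^\gamma$ of $\K[\Delta,\mathrm{st}_v\Delta]$, the product $x_v\cdot x^\gamma$ has support $\mathrm{supp}(\gamma)\cup\{v\}$, which is not a face of $\Delta$, so $x_v\cdot x^\gamma = 0$ already in $\K[\Delta]$. Consequently $x_v\tilde\alpha = 0$ in $\K[\Delta]$, hence $x_v\alpha = 0$ in $A$, as required.

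The argument is almost pure definition-chasing and does not use the manifold hypothesis, Schenzel's formula, or any partition complex machinery—it is a purely algebraic consequence of the vanishing of $\alpha$ on the star of $v$. The only point requiring a little care is the construction of the representative $\tilde\alpha$ lying in the relative face module rather than merely mapping into its image modulo $\langle\Theta\rangle$; I expect this minor bookkeeping step to be the main (mild) obstacle.
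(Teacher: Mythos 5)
Your proof is correct and takes essentially the same approach as the paper: reduce to $\beta=x_v$, then exploit that $x_v$ annihilates $\ker\bigl(\K[\Delta]\to\K[\mathrm{st}_v\Delta]\bigr)=\K[\Delta,\mathrm{st}_v\Delta]$. The paper reaches this via the cone-lemma factorization of multiplication by $x_v$ through $A\to A(\mathrm{st}_v\Delta)\to A(\mathrm{st}_v^\circ\Delta)\to A$, while your explicit choice of a lift of $\alpha$ landing in the relative face module is a hands-on rendering of the same underlying fact.
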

\begin{proof}
	It suffices to prove the lemma for $\beta$ of degree $1$, since
	any homogeneous element of positive degree is a polynomial expression
	in such elements. Any $\beta$ of degree $1$ is a linear combination
	of variables $x_{v}$, so it suffices to prove the claim for $\beta=x_{v}$,
	where $v\in\Delta$ is an arbitrary vertex. Since we have already
	established multiplication by $x_{v}$ is equivalent to the composition:
	\[
	A\overset{\mathrm{restriction}}{\rightarrow}A(\mathrm{st}_{v}\Delta)\overset{x_{v}\cdot}{\rightarrow}A(\mathrm{st}_{v}^{\circ}\Delta)\rightarrow A,
	\]
	the claim follows from the assumption that $\alpha$ vanishes in $A(\mathrm{st}_{v}\Delta)$.
\end{proof}
We prove \Cref{property:prop_1} holds for $B$. At the beginning
of \Cref{Poincare} we sketched a computation that $A_{d+1}\simeq\K.$
Let us carry it out in more detail. Schenzel's formula states
\[
\dim_{\K}(\K[\Delta]/\langle \Theta\rangle )_{d+1}=h_{d+1}(\Delta)+\sum_{i=0}^{d-1}(-1)^{i+d+1}\cdot\dim_{\K}H^{i}(\Delta;\K).
\]
The sum on the right is $(-1)^{d+1}(\chi(\Delta)-(-1)^{d}\dim_{\K}H^{d}(\Delta;\K))$,
where $\dim_{\K}H^{d}(\Delta;\K)=1$ since $\Delta$ is
a closed orientable manifold of dimension $d$. Together with the
fact $h_{d+1}(\Delta)=(-1)^{d}\chi(\Delta)$
(as follows directly from the definition $\chi(\Delta)=\sum_{i}(-1)^{i}f_{i}(\Delta)$
and our calculation at the end of \Cref{sec:basic_idea}), this implies
the claim. The same holds for $B$, since $J_{d+1}=0$ by definition.

\Cref{property:prop_2} is a consequene of the Noether isomorphism
theorems: let $v\in\Delta^{(0)}$. Since $B$ is a quotient
of $A$ by an ideal contained in the kernel of the restriction map,
the map factors through $B$. Similarly, \Cref{property:prop_3} follows
directly from the definition: each $\alpha\in B$ of degree at most
$d$ has a nonzero image in $\bigoplus_{v\in\Delta^{(0)}}A(\mathrm{st}_{v}\Delta)$,
and in particular in $A(\mathrm{st}_{v}\Delta)$ for some vertex $v$,
else $\alpha=0$ since its preimage in $A$ is an element of $J$.

Finally, we establish \Cref{property:prop_4} in two steps: first
we show $A(\mathrm{st}_{v}^{\circ}\Delta)$ injects into
$A$, then we show it also injects into $B$.
\begin{prop}
	\label[proposition]{prop:ost_A_inclusion}For $\Delta$ a manifold
	and $v\in\Delta^{(0)}$, the map $A(\mathrm{st}_{v}^{\circ}\Delta)\rightarrow A=A(\Delta)$
	is injective.
\end{prop}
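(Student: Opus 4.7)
The plan is to work with the short exact sequence
\[0 \to \K[\mathrm{st}_v^\circ\Delta] \to \K[\Delta] \to \K[\Delta - v] \to 0\]
coming from the identification $\K[\mathrm{st}_v^\circ\Delta] = \langle x_v\rangle \subset \K[\Delta]$. Tensoring with the Koszul complex $K^\ast(\Theta)$ (whose terms are free $\K[\Delta]$-modules) yields the long exact sequence
\[H^d(K^\ast \otimes \K[\Delta-v]) \overset{\delta}{\to} A(\mathrm{st}_v^\circ\Delta) \to A(\Delta) \to A(\Delta-v) \to 0,\]
so the claimed injectivity is equivalent to $\delta = 0$. When $\Delta$ is a sphere, $\Delta - v$ is a ball and hence Cohen-Macaulay, so $H^d(K^\ast \otimes \K[\Delta-v])=0$ by \Cref{thm:Koszul} and the proof finishes at once. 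This ``sphere case'' will be the key input in the general argument.

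For an arbitrary closed orientable manifold I plan to argue in two stages. In Stage~1 I will confine $\ker[A(\mathrm{st}_v^\circ\Delta) \to A(\Delta)]$ to degree $d+1$ by factoring through stars. For each $w \in \mathrm{st}_v\Delta$ with $w\neq v$, the identification $\mathrm{st}_w\mathrm{st}_v^\circ\Delta = \mathrm{st}_v^\circ\mathrm{st}_w\Delta$ makes the composition $A(\mathrm{st}_v^\circ\Delta) \to A(\Delta) \to A(\mathrm{st}_w\Delta)$ factor through $A(\mathrm{st}_w\mathrm{st}_v^\circ\Delta)$; the resulting map $A(\mathrm{st}_w\mathrm{st}_v^\circ\Delta) \hookrightarrow A(\mathrm{st}_w\Delta)$ is injective by the sphere case applied inside the cone $\mathrm{st}_w\Delta$, whose vertex-removed subcomplex $w \ast (\mathrm{lk}_w\Delta - v)$ is a cone on a ball and hence CM. For $w = v$ the element $\alpha$ restricts directly into $A(\mathrm{st}_v\Delta) = A(\mathrm{st}_v\mathrm{st}_v^\circ\Delta)$ along $A(\Delta) \to A(\mathrm{st}_v\Delta)$. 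Thus any $\alpha$ in the kernel lies in $\ker[A(\mathrm{st}_v^\circ\Delta) \to \bigoplus_w A(\mathrm{st}_w\mathrm{st}_v^\circ\Delta)]$. By the partition of unity theorem, applicable since $\mathrm{st}_v^\circ\Delta$ is pure Buchsbaum with all nonempty-face stars CM, this latter kernel has dimension $\binom{d+1}{j}\dim H^{j-1}(\mathrm{st}_v^\circ\Delta;\K)$ in degree $j$; using $H^\ast(\mathrm{st}_v^\circ\Delta) \cong \tilde H^{\ast - 1}(\mathrm{lk}_v\Delta)$, the kernel vanishes outside degree $d+1$, where it is one-dimensional.

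Stage~2 handles the single remaining degree $d+1$. A short manipulation of Schenzel's formula, using $h_{d+1}(\Psi) = (-1)^d\tilde\chi(\Psi) = \sum_{i\ge 0}(-1)^{i+d}\dim H^i(\Psi;\K)$ and cancellation with the Euler-like sum, gives the clean identity
\[\dim A(\Psi)_{d+1} = \dim H^d(\Psi;\K)\]
for any pure Buchsbaum complex $\Psi$ of dimension $d$. Applied to $\Delta$ and to $\mathrm{st}_v^\circ\Delta$ this yields one-dimensional top pieces, and applied to $\Delta - v$ it gives $\dim A(\Delta - v)_{d+1} = \dim H^d(\Delta - v;\K) = 0$ since $|\Delta - v|$ is a $d$-manifold with nonempty boundary $|\mathrm{lk}_v\Delta|$. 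The long exact sequence then forces $A(\mathrm{st}_v^\circ\Delta)_{d+1} \twoheadrightarrow A(\Delta)_{d+1}$ to be surjective between one-dimensional spaces, hence an isomorphism; together with Stage~1 this shows $\ker[A(\mathrm{st}_v^\circ\Delta) \to A(\Delta)] = 0$.

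The main obstacle will be Stage~1: verifying the iterated-star identifications $\mathrm{st}_w\mathrm{st}_v^\circ\Delta = \mathrm{st}_v^\circ\mathrm{st}_w\Delta$ and setting up the commuting square that routes the restriction through $A(\Delta)$. Stage~2 is then a short arithmetic cancellation inside Schenzel's formula, combined with the topological fact that $H^d$ vanishes for a manifold with boundary.
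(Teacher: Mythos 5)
Your argument is correct, and it takes a genuinely different route from the paper. The paper runs the short exact sequence $0\to \K[\mathrm{st}_v^\circ\Delta]\to\K[\Delta]\to\K[\Delta-v]\to 0$ through the full double complex $P^\ast\otimes K^\ast$: it first proves, via Mayer--Vietoris and $H^d(\Delta-v)=0$, that $H^i(\Delta)\to H^i(\Delta-v)$ is onto for all $i$, then transports this surjectivity through the rows, the total complexes, and finally the $(-1)$-st column, concluding that the connecting map $H^d(K^\ast\otimes\K[\Delta-v])\to A(\mathrm{st}_v^\circ\Delta)$ vanishes. You instead kill that connecting map degree by degree using two results already proved in the paper: partition of unity applied to the relative Buchsbaum complex $\mathrm{st}_v^\circ\Delta$ confines the kernel to degree $d+1$ (after routing each restriction $A(\mathrm{st}_v^\circ\Delta)\to A(\mathrm{st}_w\Delta)$ through $A(\mathrm{st}_w\mathrm{st}_v^\circ\Delta)=A(\mathrm{st}_v^\circ\mathrm{st}_w\Delta)$, where injectivity into $A(\mathrm{st}_w\Delta)$ follows because $\mathrm{st}_w\Delta-v=w\ast(\mathrm{lk}_w\Delta-v)$ is Cohen--Macaulay), and Schenzel's formula in the form $\dim A(\Psi)_{d+1}=\dim H^d(\Psi;\K)$ settles the top degree by a $1\to 1\to 0$ count in the long exact sequence. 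Your approach is more modular and reuses the machinery rather than re-running it; the paper's approach has the side benefit of also establishing the surjectivity and near-injectivity of $H^d(\mathrm{Tot}(C^{\ast,\ast}(\Delta)))\to H^d(\mathrm{Tot}(C^{\ast,\ast}(\Delta-v)))$, which \Cref{prop:ost_B_inclusion} then uses to identify $J(\Delta)\simeq J(\Delta-v)$ — so if your proof replaced the paper's, that input would still have to be supplied separately.

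Three points deserve explicit verification in a final write-up. First, the Cohen--Macaulayness of the antistar of a vertex in the homology sphere $\mathrm{lk}_w\Delta$ (your ``ball''): this is true but is itself a Reisner-type induction, not a freebie, since vertex links of a triangulated manifold need not be actual spheres. Second, the hypotheses of partition of unity for $\mathrm{st}_v^\circ\Delta$ and of Schenzel's formula for $\Delta-v$ (purity of dimension $d$ and Cohen--Macaulayness of all stars of nonempty faces) require the observation that these stars are cones over antistars in homology spheres. Third, in the Schenzel computation the cancellation leaves $(-1)^d\bigl((-1)^{-1}\dim H^{-1}(\Psi)+(-1)^d\dim H^d(\Psi)\bigr)$, so you should note $H^{-1}(\Psi)=0$ for each of the three complexes involved. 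None of these is an obstacle; all are routine given the tools the paper has already developed.
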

\begin{proof}
	For any simplicial complex $\Sigma$ and vertex $v\in\Sigma$ there
	is a short exact sequence
	\[
	0\rightarrow\K[\mathrm{st}_{v}^{\circ}\Sigma]\rightarrow\K[\Sigma]\rightarrow\K[\Sigma-v]\rightarrow0,
	\]
	where $\Sigma-v=\left\{ \tau\in\Delta\mid v\notin\tau\right\} $ is
	the anti-star of $v$. Applying this to $\Sigma=\mathrm{st}_{\tau}\Delta$
	for each $\tau\in\Delta$ (including $\emptyset$) gives the short
	exact sequence of double complexes
	\[
	0\rightarrow P^\ast(\mathrm{st}_{v}^{\circ}\Delta)\otimes K^\ast(\Theta)\rightarrow P^\ast(\Delta)\otimes K^\ast(\Theta)\rightarrow P^\ast(\Delta-v)\otimes K^\ast(\Theta)\rightarrow0.
	\]
	
	What this means is that these maps of double complexes commute with
	the differentials, and restrict to exact sequences
	\[
	0\rightarrow P^{i}(\mathrm{st}_{v}^{\circ}\Delta)\otimes K^{j}\rightarrow P^{i}(\Delta)\otimes K^{j}\rightarrow P^{i}(\Delta-v)\otimes K^{j}\rightarrow0
	\]
	for each $i,j$. Thus the short exact sequence of double complexes
	restricts to a short exact sequence of rows, for each row. It also
	restricts to a short exact sequence of total complexes, and to a short
	exact sequence of the $(-1)$-th column. We shall use each
	of these in turn.
	
	\subsubsection*{Rows of the double complex sequence}
	
	For each $i$ we have a short exact sequence of the $i$-th rows:
	\[
	0\rightarrow P^\ast(\mathrm{st}_{v}^{\circ}\Delta)\otimes K^{i}(\Theta)\rightarrow P^\ast(\Delta)\otimes K^{i}(\Theta)\rightarrow P^\ast(\Delta-v)\otimes K^{i}(\Theta)\rightarrow0,
	\]
	which yields a long exact sequence in cohomology. Let us understand
	what this sequence really is: it suffices to examine the case $i=0$,
	because the $i$-th row is isomorphic, as a vector space, to the ${d+1 \choose i}$-th
	power of the $0$-th. Up to a change in grading, this is just a short
	exact sequence of the partition complexes:
	\[
	0\rightarrow P^\ast(\mathrm{st}_{v}^{\circ}\Delta)\rightarrow P^\ast(\Delta)\rightarrow P^\ast(\Delta-v)\rightarrow0,
	\]
	which is exact in positive degrees. Thus to understand the homology
	it suffices to restrict it to degree $0$.
	
	Now, $P^\ast(\Delta)_{0}$ is the \v{C}ech complex of $\Delta$
	covered by its stars of vertices, which is isomorphic to the complex
	of simplicial cochains. The same holds for $P^\ast(\Delta-v)_{0}$,
	and the map 
	\[
	P^\ast(\Delta)_{0}\rightarrow P^\ast(\Delta-v)_{0},
	\]
	induced from the restriction map $\K[\Delta]\rightarrow\K[\Delta-v]$,
	is precisely the map of \v{C}ech complexes induced from the simplicial
	map corresponding to the inclusion $\Delta-v\rightarrow\Delta$. This
	in particular means the map
	\[
	H^{i}(P^\ast(\Delta)_{0})\rightarrow H^{i}(P^\ast(\Delta-v)_{0})
	\]
	is the map $H^{i}(\Delta;\K)\rightarrow H^{i}(\Delta-v;\K)$
	in simplicial cohomology induced by the inclusion $\Delta-v\rightarrow\Delta$.
	It is then a topological result that this is a surjection for every
	$i$: this is not difficult to prove using the Mayer-Vietoris sequence
	for cohomology, using the decomposition $\Delta=(\Delta-v)\cup\mathrm{st}_{v}\Delta$
	and the fact $H^{d}(\Delta-v)=0$. Similarly, it is an injection except
	at $i=d$.
	
	Denote $C^{i,j}(\Sigma)=P^i(\Sigma)\otimes K^j(\Theta)$ for each complex $\Sigma$ among $\{\mathrm{st}^\circ_v\Delta, \Delta, \Delta - v\}$. For each row $j$ we obtain an exact sequence:
	\[\ldots \overset{0}{\rightarrow} H^i(C^{\ast,j}(\mathrm{st}^\circ_v\Delta)) \rightarrow H^i(C^{\ast,j}(\Delta)) \twoheadrightarrow H^i(C^{\ast,j}(\Delta - v)) \overset{0}{\rightarrow} H^{i+1}(C^{\ast,j}(\mathrm{st}^\circ_v\Delta))\rightarrow\ldots \]
	
	\subsubsection*{Total complexes and columns of the double complex sequence}
	
	With our understanding of the rows in hand, we examine the total complexes.
	Recall that in the proof of \Cref{thm:total_partition_complex} we
	saw that if $C^{\ast,\ast}(\Sigma)=P^\ast(\Sigma)\otimes K^\ast(\Theta)$,
	$\Sigma$ being any of the three complexes under consideration, then
	there are isomorphisms
	\[
	H^{i}(\mathrm{Tot}(C^{\ast,\ast})_{j}^\ast)\simeq H^{i-s}(C_{j}^{\ast,s}).
	\]
	for each $i,j$ and for $s=d+1-j$. These isomorphisms commute with
	the maps $C^{\ast,\ast}(\mathrm{st}_{v}^{\circ}\Delta)\rightarrow C^{\ast,\ast}(\Delta)$
	and $C^{\ast,\ast}(\Delta)\rightarrow C^{\ast,\ast}(\Delta-v)$ by the naturality
	of the snake lemma. Thus there is a long exact sequence
	\[\ldots \overset{0}{\rightarrow} H^{i}(\mathrm{Tot}(C^{\ast,\ast}(\mathrm{st}^\circ_v\Delta))_{j}^\ast) \rightarrow H^{i}(\mathrm{Tot}(C^{\ast,\ast}(\Delta))_{j}^\ast) \twoheadrightarrow H^{i}(\mathrm{Tot}(C^{\ast,\ast}(\Delta-v))_{j}^\ast) \overset{0}{\rightarrow} \ldots\]
	corresponding to the long exact sequence on the homologies of the rows. This implies the maps
	\[
	H^{i}(\mathrm{Tot}(C^{\ast,\ast}(\Delta))_{j}^\ast)\rightarrow H^{i}(\mathrm{Tot}(C^{\ast,\ast}(\Delta-v))_{j}^\ast)
	\]
	are surjective for all $i$ and injective except when $i+j=2d+1$, because the same occurs for the corresponding maps on the homologies of the rows.
	
	Finally, in the proof of Schenzel's theorem we had isomorphisms 
	\[
	H^{i-1}(\mathrm{Tot}(C^{\ast,\ast})^\ast)_{j}\simeq H^{i}(C^{-1,\ast})_{j},
	\]
	again commuting with $C^{\ast,\ast}(\mathrm{st}_{v}^{\circ}\Delta)\rightarrow C^{\ast,\ast}(\Delta)$
	and with $C^{\ast,\ast}(\Delta)\rightarrow C^{\ast,\ast}(\Delta-v)$. Note this
	means 
	\[
	H^{i}(C^{-1,\ast}(\Delta))_{j}\rightarrow H^{i}(C^{-1,\ast}(\Delta-v))_{j}
	\]
	is surjective for all $i,j$ and injective unless $i+j=2d+2$. 
	Recalling that
	\[
	H^{d+1}(C^{-1,\ast}(\Sigma))_{j}\simeq(\K[\Sigma]/\langle\Theta\rangle)_{j}=A(\Sigma)_{j},
	\]
	we obtain the exact sequence
	\[
	H^{d}(C^{-1,\ast}(\Delta))_{j}\overset{f_{1}}{\rightarrow}H^{d}(C^{-1,\ast}(\Delta-v))_{j}\overset{f_{2}}{\rightarrow}A(\mathrm{st}_{v}^{\circ}\Delta)_{j}\overset{f_{3}}{\rightarrow}A(\Delta)_{j}\rightarrow A(\Delta-v)_{j}\rightarrow0,
	\]
	where $f_{1}$ is surjective for each $j$, so by exactness $f_{2}=0$ and $f_{3}$ is an injection.
\end{proof}
The last piece of the proof of \Cref{property:prop_4} is simply another step similar to those above.
\begin{prop}
	\label[proposition]{prop:ost_B_inclusion}For $\Delta$ a manifold
	and $v\in\Delta^{(0)}$, the map $A(\mathrm{st}_{v}^{\circ}\Delta)\rightarrow B=B(\Delta)$
	is injective.
\end{prop}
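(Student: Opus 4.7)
The plan is to reduce the statement to injectivity of multiplication by $x_v$ on low-degree pieces of $A(\mathrm{st}_v\Delta)$, and then to prove that latter injectivity by applying the partition of unity theorem a second time, now to the relative complex $\mathrm{st}_v^\circ\Delta$ itself. First I would verify that $\mathrm{st}_v^\circ\Delta$ is pure of dimension $d$ and satisfies the Buchsbaum hypothesis: purity is immediate from $\mathrm{lk}_v\Delta$ being a triangulated $(d-1)$-sphere, and Cohen-Macaulayness of $\mathrm{st}_\tau\mathrm{st}_v^\circ\Delta$ for nonempty $\tau$ splits into two cases. If $v\in\tau$ the star coincides with $\mathrm{st}_\tau\Delta$, which is Cohen-Macaulay as the star of a face in a manifold; if $v\notin\tau$ the star is the relative cone at $v$ over $\tau*\mathrm{lk}_{\tau\cup v}\Delta$, and Cohen-Macaulayness follows from \Cref{coning} applied to this base, which is itself a cone over the sphere $\mathrm{lk}_{\tau\cup v}\Delta$.

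Because $\mathrm{st}_v^\circ\Delta$ has $v$ as its unique vertex and is Buchsbaum, the partition of unity theorem applies and yields
\[
\dim_{\K}H^{-1}(P^*(\mathrm{st}_v^\circ\Delta)/\langle\Theta\rangle)_i={d+1\choose i}\dim_{\K}H^{i-1}(\mathrm{st}_v^\circ\Delta;\K),
\]
and the right-hand side vanishes for all $i\le d$ because $\mathrm{st}_v^\circ\Delta$ is a disk relative to its boundary sphere and so has trivial reduced cohomology below degree $d$. Since the only vertex is $v$, the partition complex has $P^0(\mathrm{st}_v^\circ\Delta)=\K[\mathrm{st}_v\Delta]$, so this vanishing translates into the assertion that the natural inclusion-induced map $A(\mathrm{st}_v^\circ\Delta)\to A(\mathrm{st}_v\Delta)$ is injective in every degree $\le d$. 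Composing with the cone isomorphism $A(\mathrm{st}_v\Delta)\xrightarrow{x_v\cdot}A(\mathrm{st}_v^\circ\Delta)$, whose further composition with the above inclusion is simply multiplication by $x_v$, I conclude that $x_v\cdot\colon A(\mathrm{st}_v\Delta)_{i-1}\to A(\mathrm{st}_v\Delta)_i$ is injective for every $i\le d$.

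The finishing step is then short. Take $\bar\alpha\in A(\mathrm{st}_v^\circ\Delta)$ homogeneous of degree $i$ whose image in $A$ lies in $J$. If $i=d+1$, then $J_{d+1}=0$ forces this image to be zero and \Cref{prop:ost_A_inclusion} gives $\bar\alpha=0$. If $i\le d$, then the defining property of $J$ says the restriction of $\bar\alpha$ to $A(\mathrm{st}_v\Delta)$ vanishes; writing $\bar\alpha=x_v\gamma$ via the cone isomorphism with $\gamma\in A(\mathrm{st}_v\Delta)_{i-1}$, and observing as in the lemma preceding \Cref{prop:ost_A_inclusion} that inclusion-then-restriction agrees with the inclusion-induced map above, this vanishing becomes $x_v\gamma=0$ in $A(\mathrm{st}_v\Delta)_i$. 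Since $i-1\le d-1$, the preceding paragraph forces $\gamma=0$, hence $\bar\alpha=0$. The main obstacle will be the Buchsbaum verification for $\mathrm{st}_v^\circ\Delta$; once this is in place, everything else is a formal combination of \Cref{prop:ost_A_inclusion}, the cone lemma, and the second application of the partition of unity theorem.
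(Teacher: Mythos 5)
Your approach is genuinely different from the paper's, which instead uses a $3\times 3$ snake-lemma diagram involving the antistar $\Delta - v$ and the isomorphism $J(\Delta)\simeq J(\Delta - v)$ extracted from the previous proof. You re-apply partition of unity locally to $\mathrm{st}_v^\circ\Delta$, which is a perfectly legitimate and rather elegant route; the main price is that you must independently certify the Buchsbaum hypothesis for the pair $\mathrm{st}_v^\circ\Delta=(\mathrm{st}_v\Delta,\mathrm{lk}_v\Delta)$, where the paper gets this ``for free.'' Both verifications you sketch go through, although your invocations deserve cleanup: $\mathrm{lk}_v\Delta$ need not be a triangulated sphere for a general triangulated manifold (only a homology sphere, which is all Reisner actually needs), and the Cohen--Macaulayness of $\mathrm{st}_\tau\mathrm{st}_v^\circ\Delta=(v\ast X,X)$ with $X=\tau\ast\mathrm{lk}_{\tau\cup v}\Delta$ is not literally an instance of \Cref{coning}, whose hypothesis is a pair of the form $(v\ast\Delta,v\ast\Gamma)$; what you want is that $\K[(v\ast X,X)]$ is, as a module, isomorphic to $\K[v\ast X]$ shifted by one degree via the cone lemma, which is Cohen--Macaulay iff $X$ is.

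There is, however, one concrete false step. You assert that because $v$ is the only vertex \emph{of the relative complex} $\mathrm{st}_v^\circ\Delta$, the partition complex satisfies $P^0(\mathrm{st}_v^\circ\Delta)=\K[\mathrm{st}_v\Delta]$. This is not so: by definition, $P^0(\Psi)=\bigoplus_{w\in\Delta^{(0)}}\K[\mathrm{st}_w\Psi]$ ranges over \emph{all} vertices of the ambient (absolute) complex, here the vertices of $\mathrm{st}_v\Delta$, and for $w\neq v$ the summand $\K[\mathrm{st}_w\mathrm{st}_v^\circ\Delta]$ is a nonzero ideal of $\K[\mathrm{st}_w\mathrm{st}_v\Delta]$. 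Thus partition of unity gives only the injectivity of $A(\mathrm{st}_v^\circ\Delta)\rightarrow\bigoplus_w\K[\mathrm{st}_w\mathrm{st}_v^\circ\Delta]/\langle\Theta\rangle$ in degrees below $d+1$, not directly the injectivity into the single summand $A(\mathrm{st}_v\Delta)$. The argument is rescued by one further, currently missing observation: every restriction map $\K[\mathrm{st}_v^\circ\Delta]\rightarrow\K[\mathrm{st}_w\mathrm{st}_v^\circ\Delta]$ factors through the inclusion $\K[\mathrm{st}_v^\circ\Delta]\hookrightarrow\K[\mathrm{st}_v\Delta]$ followed by the restriction $\K[\mathrm{st}_v\Delta]\twoheadrightarrow\K[\mathrm{st}_w\mathrm{st}_v\Delta]$, so the full map $P^{-1}\to P^0$ factors through $\K[\mathrm{st}_v\Delta]$; consequently the injectivity of the composite in degrees $\le d$ does imply the injectivity of $A(\mathrm{st}_v^\circ\Delta)\to A(\mathrm{st}_v\Delta)$ in those degrees. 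Once you supply this factoring lemma in place of the erroneous $P^0$ identification, the last paragraph of your argument is correct and the proposition follows.
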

\begin{proof}
	Consider the following commutative diagram, in which the ideals $J$
	are those from the definition of $B$.
	\[
	\xymatrix{ &  & 0\ar[d] & 0\ar[d]\\
		& 0\ar[r]\ar[d] & J(\Delta)\ar[d]\ar[r] & J(\Delta-v)\ar[d]\ar[r] & 0\\
		0\ar[r] & A(\mathrm{st}_{v}^{\circ}\Delta)\ar[d]\ar[r] & A(\Delta)\ar[d]\ar[r] & A(\Delta-v)\ar[d]\ar[r] & 0\\
		0\ar[r] & A(\mathrm{st}_{v}^{\circ}\Delta)\ar[r] & B(\Delta)\ar[r] & B(\Delta-v)\ar[r] & 0
	}
	\]
	The columns of the diagram are exact by definition, and the middle
	row 
	\[
	0\rightarrow A(\mathrm{st}_{v}^{\circ}\Delta)_{j}\rightarrow A(\Delta)_{j}\rightarrow A(\Delta-v)_{j}\rightarrow0
	\]
	was shown to be exact at the end of the previous proof. Thus if we
	show the map $J(\Delta)\rightarrow J(\Delta-v)$ is an isomorphism,
	the snake lemma will prove the last row is exact as well, establishing
	the proposition. During our proof of the partition of unity theorem
	we proved
	\[
	H^{-1}(P^\ast/\langle \Theta\rangle )\simeq H^{d}(\mathrm{Tot}(P^\ast\otimes K^\ast)^\ast)
	\]
	for Buchsbaum complexes. In the previous proof, we established 
	\[
	H^{d}(\mathrm{Tot}(P^\ast(\Delta)\otimes K^\ast)^\ast)\rightarrow H^{d}(\mathrm{Tot}(P^\ast(\Delta-v)\otimes K^\ast)^\ast)
	\]
	is surjective in all degrees and injective except in degree $d+1$.
	Since $J$ is defined to be precisely $H^{-1}(P^\ast/\langle \Theta\rangle )$
	in all but degree $d+1$ (in which it is $0$), $J(\Delta)\simeq J(\Delta-v)$
	by the map induced from the restriction, as required.
\end{proof}

\subsection{Further remarks on Poincar\'e duality}

First, let us notice that \Cref{thm:pd} describes a unique way to get a Poincar\'e duality algebra. Indeed, to any element $\gamma$ of a polynomial ring $A$ generated in degree one, one can associate a unique quotient algebra with fundamental class $[\gamma]$. So, the face ring of a closed orientable pseudomanifold of dimension $d-1$ has a unique quotient that is a Poincar\'e algebra with respect to the unique fundamental class in degree $d$.

However, the phenomenon extends further. As proven in \cite{AHL}, \Cref{thm:pd} and its proof extend immediately to prove that

\begin{thm}
For a triangulated orientable manifold $M$ of dimension $d-1$, we have a perfect pairing
\[B_k(M)\ \times\ B_{d-k}(M,\partial M)\ \rightarrow\ B_{d}(M,\partial M)\]
\end{thm}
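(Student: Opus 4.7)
The plan is to mirror the four-property framework of Theorem~\ref{thm:pd} in the relative setting. Define $B(M) := A(M)/J$ and $B(M,\partial M) := A(M,\partial M)/J'$, where $J, J'$ are the kernels of the restriction maps to vertex stars in all sub-top degrees, exactly as in the closed case. Since $A(M,\partial M)$ is an ideal of $A(M)$, the multiplication descends to a well-defined pairing $B(M)\times B(M,\partial M)\to B(M,\partial M)$, and Schenzel's formula together with orientability (which gives $H^{d-1}(M,\partial M;\K)\simeq\K$) yields $B_d(M,\partial M)\simeq\K$.

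The main work is to establish relative analogues of the four defining properties of $B$, both for $B(M)$ and for $B(M,\partial M)$. The most delicate is the injectivity
\[A(\mathrm{st}_v^\circ M)\hookrightarrow B(M,\partial M)\]
for each interior vertex $v\in M^{(0)}\setminus\partial M^{(0)}$. The proof parallels Propositions~\ref{prop:ost_A_inclusion} and~\ref{prop:ost_B_inclusion}: use the short exact sequence of double complexes
\[0\to P^\ast(\mathrm{st}_v^\circ M)\otimes K^\ast \to P^\ast(M,\partial M)\otimes K^\ast \to P^\ast((M,\partial M)-v)\otimes K^\ast\to 0,\]
which exists because $v$ being interior ensures every monomial containing $x_v$ lies in the relative module $I_{\partial M}/I_M$. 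The homological calculus proceeds exactly as before, with the surjectivity of $H^i(M,\partial M)\to H^i((M,\partial M)-v)$ provided by the Mayer--Vietoris sequence for the pair.

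With these properties in hand, nondegeneracy follows by direct adaptation of the argument of Theorem~\ref{thm:pd}: given nonzero $\alpha\in B_k(M)$ with $k<d$, Property 3 furnishes an interior vertex $v$ at which $\alpha$ does not vanish; the cone lemma gives $x_v\alpha\neq 0$ in $A(\mathrm{st}_v^\circ M)$; and the injectivity above places $x_v\alpha\neq 0$ in $B_{k+1}(M,\partial M)$. Iterating this construction inside $B(M,\partial M)$ (using its own copy of the four properties at each step) builds a monomial $\beta=x_{v_1}\cdots x_{v_{d-k}}$ with $\alpha\beta\neq 0$ in $B_d(M,\partial M)\simeq\K$, and an analogue of Lemma~\ref{lem:Poincare_deg_1} for modules over rings generated in degree one then upgrades this to perfectness. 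The reverse direction, finding an $\alpha$ witnessing nondegeneracy for a given $\beta$, is symmetric. The principal obstacle is verifying Property 3 in the interior-vertex strengthening---ensuring every nonzero sub-top-degree element of $B(M)$ has nontrivial restriction to some \emph{interior} vertex star---which may require refining the definition of $J$ itself, while checking that the other three properties remain intact.
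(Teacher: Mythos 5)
Your plan correctly mirrors the four-property strategy of \Cref{thm:pd}, but the obstacle you flag at the very end is not a loose end to be patched later---it is the heart of the matter, and your proposal does not supply the needed idea. (The paper, for its part, states this theorem without proof and defers to \cite{AHL}.) The problem is this: the multiplication map $f = x_v\cdot : B(M)\to B(M,\partial M)$ lands inside the relative module only when $v$ is an \emph{interior} vertex, since $x_v\notin I_{\partial M}/I_M$ when $v\in\partial M$. So to run the first step of the argument you must know that every nonzero $\alpha\in B_k(M)$, $k<d$, has nonzero restriction to $A(\mathrm{st}_v M)$ for some \emph{interior} $v$. If $J=J(M)$ is defined, as in the closed case, as the kernel of the restriction to \emph{all} vertex stars in sub-top degrees, then an element restricting nontrivially only at boundary vertices is nonzero in $B(M)$ yet cannot be fed into your multiplication step. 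You acknowledge that $J$ ``may require refining,'' but without a concrete replacement definition and a verification that \Cref{property:prop_1}, \Cref{property:prop_2}, and \Cref{property:prop_4} survive the change (in particular that $B_d(M,\partial M)\simeq\K$ and that the map $A(\mathrm{st}_v^\circ M)\to B(M,\partial M)$ remains injective after enlarging $J$), the argument does not close. The natural fix is a manifold-with-boundary partition-of-unity statement localizing to interior vertex stars---in the spirit of \Cref{thm:interior_partition_of_unity}, but for arbitrary triangulated manifolds rather than disks with induced boundary---and this requires a separate homological argument you have not sketched.

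Two smaller points. First, your claim that ``the reverse direction\ldots is symmetric'' is not accurate: for nondegeneracy in the $B(M,\partial M)$ slot one works inside the relative module from the start, and since $I_{\partial M}/I_M$ is an ideal of $\K[M]$, multiplication by $x_v$ preserves it for \emph{every} $v$, interior or not; the two slots therefore need genuinely different treatments, with the interior-vertex constraint appearing only on the $B(M)$ side. Second, your statement that the Mayer--Vietoris surjectivity $H^i(M,\partial M)\to H^i((M,\partial M)-v)$ ``proceeds exactly as before'' glosses over the fact that the deleted-vertex complex must be taken as $(M-v,\partial M - v)$ and the relevant decomposition is of the pair; this is plausible but is not a literal repetition of the closed computation and should be checked.
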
 

\section{Applications: Triangulations and a conjecture of K\"uhnel}

We now will spend two sections providing basic, and mostly inductive, combinatorial applications of partition of unity, although more sophisticated applications are left out here, and relegated to \cite{AHL}.  As the ideas employed are adaptations
of some of those in earlier parts of the paper, we allow ourselves
to use the methods freely at this point. We hope the proof demonstrates
the simplicity and versatility of the tools involved. An important property in this context is the Lefschetz property. 

\begin{defn}
	A Cohen-Macaulay complex $\Delta$ of dimension $d$ has the generic
	strong Lefschetz property if for a generic sequence $\Theta=(\theta_{1},\ldots,\theta_{d+1})$
	in $\K[\Delta]_{1}$ and a generic $\ell\in(\K[\Delta]/\langle \Theta\rangle )_{1}$,
	the map
	\[
	\ell^{d+1-2j}\cdot:(\K[\Delta]/\langle \Theta\rangle )_{j}\rightarrow(\K[\Delta]/\langle \Theta\rangle )_{d+1-j}
	\]
	is injective for each $j\le\frac{d}{2}$.
	
	The generic ``almost Lefschetz property'' is the weaker demand that
	$\ell^{d-2j}\cdot$ be an injection for each $j$, under the same
	conditions.
	
	The Hard Lefschetz theorem is the statement that the strong Lefschetz
	property holds for some $\Theta$.
	
	For $\Delta$ the face ring of a sphere, the Hard Lefschetz theorem
	was proved in \cite{AHL}, with the caveat that the linear system has to be chosen sufficiently generic. Special cases were known earlier, a main
	one being the result for face rings of simplicial polytopes: in this
	case geometric tools are available. See \cite{StanleyHL,McMullenInvent}. 
\end{defn}

\subsection{The inductive principle and partition of unity}

The most basic application of partition of unity is a tool for induction. Indeed, assume that we know the strong Lefschetz property for spheres of dimension $d-1$. Then we can conclude the almost Lefschetz property for closed (not necessarily orientable) manifolds of dimension $d$. Indeed, this is simple: We have
\[\begin{tikzcd}
	B_{j}(M) \arrow{r}{\ \cdot \ell^{d-2j}\ }  \arrow[hook]{d}{}  & B_{d-j}(M) \arrow[hook]{d}{} \\
	\bigoplus_{v\in M^{(0)}}{A}_{j}(\mathrm{st}_v M) \arrow[hook]{r}{\ \cdot \ell^{d-2j}\ } & \bigoplus_{v\in M^{(0)}}{A}_{d-j}(\mathrm{st}_v M)  
	\end{tikzcd}
\]

\subsection{K\"uhnel's efficient triangulations of manifolds}

Poincar\'e duality combines the importance of two maps: The map 
\[A(\Delta)\ \rightarrow\ \bigoplus_{v\in\Delta^{(0)}}A(\mathrm{st}_v\Delta)\]
elevation of the trivial surjection $A(\Delta)\rightarrow A(\mathrm{st}_v\Delta)$,
which is described by the partition theorem, and the map
$A(\mathrm{st}_v^\circ\Delta)\ \rightarrow\ A(\Delta) $
whose properties are described by our analysis of Schenzel's work. We can also elevate this map to the sum
\[\bigoplus_{v\in\Delta^{(0)}} A(\mathrm{st}_v^\circ\Delta)\ \rightarrow\ A(\Delta) \]
and once again, we trivially obtain a surjection in every positive degree.
	
This is quite powerful: K\"uhnel \cite{Kuhnel} famously asked how small a triangulation of a manifold can be chosen. We provide such a bound here, on the number of vertices. For closed, orientable manifolds, this is a result of Murai \cite{MT}, though our proof is simpler.

	Consider now any element $\eta$ of $A_{d-2j+2}(M)$, for instance, the $(d-2j+2)$-th power of a degree one element. Consider the diagram
	\[\begin{tikzcd}
	A_{j}(M) \arrow{r}{\ \cdot \eta\ }  & A_{d-j+2}(M) \\
	\bigoplus_{v\in M^{(0)}}{A}_{j-1}(\mathrm{st}_v M) \arrow[two heads]{u}{} \arrow[two heads]{r}{\ \cdot \eta\ } & \bigoplus_{v\in M^{(0)}}{A}_{d-j+1}(\mathrm{st}_v M) \arrow[two heads]{u}{} 
	\end{tikzcd}
	\]
	where the vertical maps are the cone lemmas, given by the composition
	\[{A}_{j-1}(\mathrm{st}_v M)\ \simeq\ {A}_{j}(\mathrm{st}_v^\circ M)\ \longrightarrow\ {A}_{j}(M)\]
	where the last map is the inclusion of ideals. 
	Now, following the Lefschetz property, the bottom map is a surjection. Hence, the top horizontal map is a surjection. 
	
By the partition of unity theorem, the kernel of the top map is of dimension at least $\binom{d+1}{j}\dim_{\K }H^{j-1}(M;\K )$, and the image is of dimension at least $\binom{d+1}{j-1}\dim_{\K }H^{d-j+1}(M;\K )$.	

It follows that $A_{j}(M)$ is of dimension at least
	\[\binom{d+1}{j}\dim_{\K }H^{j-1}(M;\K )\ +\ \binom{d+1}{j-1}\dim_{\K }H^{d-j+1}(M;\K )\]

In particular, following \Cref{Macaulay}, we have
\[\binom{d+1}{j}\mathrm{b}_{j-1}(M)\ +\ \binom{d+1}{j-1}\mathrm{b}_{d-j+1}(M)
		\ \le \ \binom{n-d+j}{j}\ \quad \text{for}\ 1\le j\le \frac{d+1}{2}.\]

\section{Applications: Subdivisions and the almost-Lefschetz property}\label[section]{sec:subdivisions}

In this section we prove a general subdivision theorem, showing that under relatively mild conditions a subdivision of a Cohen-Macaulay
complex has the ``almost-Lefschetz'' property.
\begin{thm}
	\label[theorem]{thm:subdivisions_almost_lefschetz}Let $\Sigma$ be a Cohen-Macaulay
	(finite) ball complex of dimension $d$ (not necessarily simplicial),
	and let $\Delta$ be any subdivision of $\Sigma$ satisfying the following
	condition: for each face $\sigma\in\Sigma$ of dimension at least $\frac{d}{2}$, the subdivision induced
	on $\partial\sigma$ by $\Delta$ is an induced subcomplex of $\Delta$.
	Then $\Delta$ has the almost-strong Lefschetz property: for a generic
	linear system of parameters $\langle \Theta\rangle $ in
	$\K[\Delta]$, there is a linear form $\omega$ such that
	\[
	\omega^{d-2i-1}:(\K[\Delta]/\langle \Theta\rangle )_{i}\rightarrow(\K[\Delta]/\langle \Theta\rangle )_{d-i-1}
	\]
	is injective.
\end{thm}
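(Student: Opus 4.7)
The strategy is to extend the partition-complex machinery to the subdivided setting, using a partition complex indexed by the \emph{coarser} complex $\Sigma$. For each face $\sigma \in \Sigma$, let $\Delta_\sigma$ denote the subdivision of the closed star $\mathrm{st}_\sigma \Sigma$ induced by $\Delta$. Define the complex
\[ Q^\ast : \K[\Delta] \to \bigoplus_{v \in \Sigma^{(0)}} \K[\Delta_v] \to \cdots \to \bigoplus_{\sigma \in \Sigma^{(d)}} \K[\Delta_\sigma] \to 0, \]
with differentials patterned on the \v{C}ech complex for the cover of $\Sigma$ by open vertex stars. As in \Cref{prop:partition_homology}, the fine-graded decomposition of $\K[\Delta]$ shows that $Q^\ast$ is acyclic in positive degrees (the degree-$0$ part recovering $H^\ast(\Sigma;\K) = H^\ast(\Delta;\K)$). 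The induced-subcomplex hypothesis on $\partial\sigma$ for $\dim\sigma \geq d/2$ is exactly what is needed to guarantee that monomial supports interact cleanly with the stratification of $\Delta$ by faces of $\Sigma$, so that the fine-graded pieces still behave like \v{C}ech complexes.

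Tensor $Q^\ast$ with the Koszul complex $K^\ast(\Theta)$ of a generic linear system of parameters $\Theta \subset \K[\Delta]_1$, and consider the double complex $D^{\ast,\ast} = Q^\ast \otimes K^\ast(\Theta)$. Since $\Sigma$ is Cohen-Macaulay, every $\Delta_\sigma$ (a subdivision of a Cohen-Macaulay ball) is Cohen-Macaulay, so each column $D^{\sigma,\ast}$ for $\sigma \in \Sigma^{(\geq 0)}$ is exact below the top. The mapping-cone technology of \Cref{Tot_mapping_cone}, together with \Cref{lem:exactness}, then expresses the graded pieces of $\K[\Delta]/\langle\Theta\rangle$ in terms of the quotients $\K[\Delta_\sigma]/\langle\Theta\rangle$ and the homology of $Q^\ast/\langle\Theta\rangle$, in direct analogy with the partition-of-unity theorem.

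With this setup, I would argue by induction on $\dim\Sigma$. For a top-dimensional face $\sigma \in \Sigma^{(d)}$, the star $\Delta_\sigma$ is a subdivided ball whose boundary is a subdivided sphere inheriting the induced-subcomplex hypothesis, and the strong Lefschetz theorem for subdivided spheres from \cite{AHL} applies to $\partial\Delta_\sigma$. The cone lemma and the relative Poincar\'e duality of \Cref{sec:Poincare} transfer this to an almost-Lefschetz statement on $\Delta_\sigma$ itself (the shift between the ball and its boundary is precisely what turns a strong statement on the sphere into an almost statement on the ball). For smaller-dimensional $\sigma$, the inductive hypothesis on the pair $(\mathrm{st}_\sigma\Sigma, \Delta_\sigma)$ yields the corresponding property. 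Finally, as in the proof of \Cref{thm:pd}, to show that multiplication by $\omega^{d-2i-1}$ is injective on $\K[\Delta]/\langle\Theta\rangle$, it suffices to show it is nonzero after restriction to \emph{some} summand $\K[\Delta_\sigma]/\langle\Theta\rangle$; the partition-of-unity-style commuting diagram
\[\begin{tikzcd}
(\K[\Delta]/\langle\Theta\rangle)_i \arrow{r}{\ \omega^{d-2i-1}\ } \arrow{d} & (\K[\Delta]/\langle\Theta\rangle)_{d-i-1} \arrow{d} \\
\bigoplus_{\sigma \in \Sigma^{(0)}} (\K[\Delta_\sigma]/\langle\Theta\rangle)_i \arrow{r}{\ \omega^{d-2i-1}\ } & \bigoplus_{\sigma \in \Sigma^{(0)}} (\K[\Delta_\sigma]/\langle\Theta\rangle)_{d-i-1}
\end{tikzcd}\]
reduces the question to the inductive case.

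The main obstacle I expect is the careful bookkeeping at the middle degree: translating strong Lefschetz on a subdivided boundary sphere to a statement about the subdivided ball, via the cone lemma and the long exact sequence of a pair, loses one degree in the exponent of $\omega$, and the induction has to be arranged so that this loss occurs only once overall (producing the exponent $d-2i-1$ rather than some smaller quantity). The induced-subcomplex hypothesis for $\dim\sigma \geq d/2$ is precisely what controls the middle-range pairing so that the inductive step does not shed additional degrees, and verifying this numerology cleanly is where the real work lies.
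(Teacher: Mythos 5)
Your high-level picture---reduce via a partition-complex-style injection to the subdivided facets of $\Sigma$, feed in Hard Lefschetz for spheres, and close with a commuting square---matches the paper. But both of the concrete technical pieces you propose diverge from what the paper does, and in one place there is a real gap.

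First, the complex $Q^\ast$ you build is a \v{C}ech-type complex ascending through the face dimensions of $\Sigma$, with vertex stars $\Delta_v=\mathrm{st}_v\Sigma$ subdivided. The paper instead uses two \emph{different} complexes. To handle the global step, it builds a cellular-chain-like complex
\[
\tilde{P}^\ast(\Sigma)\ :\ 0\rightarrow\K[\Delta]\rightarrow\bigoplus_{\tau\in\Sigma^{(d)}}\K[\tau]\rightarrow\bigoplus_{\rho\in\Sigma^{(d-1)}}\K[\rho]\rightarrow\cdots\rightarrow\bigoplus_{v\in\Sigma^{(0)}}\K\rightarrow 0,
\]
which starts at $\K[\Delta]$, maps immediately to the subdivided \emph{facets} of $\Sigma$, and then descends dimensionally. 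The point is exactly the first arrow: it is the restriction $\K[\Delta]\to\bigoplus_{\tau\in\Sigma^{(d)}}\K[\tau]$ whose injectivity in degrees $\le d/2$ one needs, so the complex is designed so this map is $\tilde{P}^{-1}\to\tilde{P}^0$. Your $Q^\ast$ instead targets $\bigoplus_{v\in\Sigma^{(0)}}\K[\Delta_v]$, and the objects $\Delta_v$ are subdivisions of $d$-dimensional vertex stars, not subdivided facets; they have the same dimension as $\Delta$, so your proposed induction on $\dim\Sigma$ has nothing to descend on. Moreover, because $\tilde{P}^\ast$ involves $\K[\tau]$ for lower-dimensional $\tau\in\Sigma$, the Koszul columns $\K[\tau]\otimes K^\ast(\Theta)$ are no longer resolutions (the number of parameters exceeds the Krull dimension), and the paper needs a dedicated lemma (\Cref{cor:subd_koszul}) using the Lefschetz property to get exactness of the columns in degrees $\le d/2$. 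This ingredient is absent from your outline, and without it the mapping-cone argument doesn't close.

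Second, and this is the genuine gap: your mechanism for "passing from the subdivided boundary sphere $\partial\Delta_\sigma$ to an almost-Lefschetz statement on the subdivided ball $\Delta_\sigma$ via the cone lemma and relative Poincar\'e duality" is not a step that exists as stated. The cone lemma relates the star of a vertex to its link; it does not relate a ball to its boundary sphere, and there is no natural injection $\K[\sigma]/\langle\Theta\rangle\hookrightarrow\K[\partial\sigma]/\langle\Theta'\rangle$ in the degree range you need. What the paper actually proves is an \emph{interior partition of unity} theorem (\Cref{thm:interior_partition_of_unity}) for triangulated disks with induced boundary: the partition complex restricted to \emph{strongly interior} faces is exact, hence $\K[\sigma]/\langle\Theta\rangle\hookrightarrow\bigoplus_{v\in\sigma^\circ}\K[\mathrm{st}_v\sigma]/\langle\Theta\rangle$ is injective beneath degree $d+1$. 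Only then does the cone lemma identify each summand with $\K[\mathrm{lk}_v\sigma]/\langle\Theta'\rangle$, a sphere of dimension $d-1$, where Hard Lefschetz applies with exponent $d-2j$. That replacement of $d+1-2j$ by $d-2j$ coming from the $(d-1)$-sphere in the links is precisely the "almost," and the induced-boundary hypothesis is what makes the interior partition of unity work. You have identified the right shape of the numerology but not the theorem that produces it.
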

This generalizes several results, and applies to a wide range of subdivisions, for instance barycentric subdivisions \cite{KN2} (who proved this only for a very special case of shellable complexes), antiprism subdivisions introduced by Izmestiev and Joswig \cite{IJ}, and interval subdivisions of \cite{AN}; and even for these subdivisions, there was no Lefschetz property known in this generality.

Among the more sophisticated connections, it generalizes the main result of \cite{JS} (by seeing balanced subdivisions of the sphere as derived subdivisions of a cell complex) and, for ball subdivisions of spheres, it is in turn a a special case of the biased pairing theorem \cite[Section 5.4]{AHL}.

The use of the Hard Lefschetz theorem is again inductive, on the links of vertices
in the interiors of (subdivided) faces $\sigma\in\Sigma$.

The next several subsections carry out necessary preparations. These
are, in order:
\begin{enumerate}
	\item Definitions of complexes and subdivisions,
	\item A partition of unity theorem for disks with induced boundary,
	\item Discussion of the Koszul complex and the Lefschetz property for spheres,
	and
	\item Introduction of a modified partition complex.
\end{enumerate}
The proof is easily put together once these pieces are ready. The
idea is that for a facet $K$ of $\Sigma$, the simplicial subdivision
$K^{\prime}\subset\Delta$ has the almost Lefschetz property: this
is a general theorem for simplicial disks with induced boundary. Once
it is known, we use the Cohen-Macaulay property to deduce the global
statement for $\Sigma$.

In other words, we can formulate the key auxiliary result as follows, from which the result follows via inductive principle:

\begin{thm}
With notation as in \Cref{thm:subdivisions_almost_lefschetz}, we have
\[\K[\Delta]/\langle \Theta\rangle\ \longrightarrow\ \bigoplus_{v}\K[\mathrm{st}_v \Delta]/\langle \Theta\rangle\]
is injective for all degrees at most $\frac{d}{2}$, and where $v$ ranges over vertices in the interior of facets of $\Sigma$.
\end{thm}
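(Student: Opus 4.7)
The plan is to construct a modified partition complex $Q^{\ast}$ adapted to the interior vertices of facets of $\Sigma$, compute its cohomology, and then rerun the machinery behind the partition of unity theorem. Write $\Delta_{\partial} := \bigcup_{\sigma} \partial\Delta_{\sigma}$ over facets $\sigma$ of $\Sigma$, and let $V_{\mathrm{int}}$ denote the vertices of $\Delta$ outside $\Delta_{\partial}$ (equivalently, those lying in the interior of some facet of $\Sigma$). A short combinatorial check using the induced-subdivision hypothesis at the facet level shows $\Delta_{\partial}$ is itself an induced subcomplex of $\Delta$: any $\tau\in\Delta$ with $\tau^{(0)}\subset\Delta_{\partial}^{(0)}$ has a carrier $\sigma_{\tau}\in\Sigma$ either of dimension $<d$ (giving $\tau\in\Delta_{\sigma_{\tau}}\subset\Delta_{\partial}$ immediately) or equal to a facet, in which case induced-ness of $\partial\Delta_{\sigma_{\tau}}$ places $\tau$ in $\Delta_{\partial}$.

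Define $Q^{\ast}$ by keeping only those summands of $P^{\ast}(\Delta)$ indexed by faces meeting $V_{\mathrm{int}}$, i.e.,
\[
Q^{\ast} \;=\; 0\to\K[\Delta]\to\bigoplus_{v\in V_{\mathrm{int}}}\K[\mathrm{st}_{v}\Delta]\to\bigoplus_{\substack{\tau\in\Delta^{(1)}\\ \tau\cap V_{\mathrm{int}}\neq\emptyset}}\K[\mathrm{st}_{\tau}\Delta]\to\cdots.
\]
Splitting by fine grading as in \Cref{prop:partition_homology}, each piece $Q^{\ast}_{\alpha}$ is a \v{C}ech-like complex of the portion of $\mathrm{st}_{\mathrm{supp}(\alpha)}\Delta$ meeting $V_{\mathrm{int}}$. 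When $\mathrm{supp}(\alpha)$ itself meets $V_{\mathrm{int}}$, the relevant ambient star is contractible, so $Q^{\ast}_{\alpha}$ is acyclic for $\mathrm{supp}(\alpha)\neq\emptyset$. The delicate case is $\mathrm{supp}(\alpha)\subset\Delta_{\partial}$, $\mathrm{supp}(\alpha)\neq\emptyset$: here acyclicity in the relevant range is forced by the induced-subdivision hypothesis on $\partial\sigma$ for $\dim\sigma\ge d/2$, which controls the topology of links of small-dimensional faces of $\Delta_{\partial}$. For $\alpha=0$ one recovers $H^{\ast}(\Delta,\Delta_{\partial};\K)$. The conclusion of this step is the identification $H^{i}(Q^{\ast})\simeq H^{i}(\Delta,\Delta_{\partial};\K)$ in the graded pieces relevant to our claim.

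Next, form the double complex $C^{\ast,\ast} := Q^{\ast}\otimes K^{\ast}(\Theta)$ and rerun the analysis of \Cref{thm:total_partition_complex}. The crucial input is that each $\mathrm{st}_{v}\Delta$ for $v\in V_{\mathrm{int}}$ is Cohen--Macaulay, so the columns $C^{i,\ast}$ for $i\ge 0$ are exact. The mapping-cone argument of \Cref{Tot_mapping_cone} applied to $\mathfrak{R}^{-1}$, combined with the cohomology of $Q^{\ast}$ computed above, then yields
\[
\dim_{\K}H^{-1}(Q^{\ast}/\langle\Theta\rangle)_{j}\;=\;\binom{d+1}{j}\cdot\dim_{\K}H^{j-1}(\Delta,\Delta_{\partial};\K).
\]
Since $\Delta$ is Cohen--Macaulay of dimension $d$ and $\Delta_{\partial}$ is Cohen--Macaulay of dimension $d-1$, Reisner's theorem and the long exact sequence of the pair force $H^{i}(\Delta,\Delta_{\partial};\K)=0$ for $i<d-1$; for $j\le d/2$ this gives $H^{-1}(Q^{\ast}/\langle\Theta\rangle)_{j}=0$, which is precisely the injectivity we want.

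The main obstacle is the second step: establishing the identification $H^{\ast}(Q^{\ast})\simeq H^{\ast}(\Delta,\Delta_{\partial};\K)$ in the relevant range, in particular controlling the fine-graded pieces $Q^{\ast}_{\alpha}$ with $\mathrm{supp}(\alpha)$ living entirely inside $\Delta_{\partial}$. This is exactly where the hypothesis that $\partial\sigma$ is an induced subdivision for $\dim\sigma\ge d/2$ is essential, and it is what forces the degree cutoff at $d/2$ in the final statement.
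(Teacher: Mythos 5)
Your overall strategy---replace $P^\ast(\Delta)$ by a ``partition complex over interior vertices only,'' identify its cohomology, and rerun the double-complex machinery---is a reasonable instinct, but two concrete steps fail. First, $Q^\ast$ as you define it is not a chain complex. The family of faces meeting $V_{\mathrm{int}}$ is \emph{upward} closed, so the corresponding summands span a subcomplex of $P^\ast(\Delta)$ only in positions $\ge 0$; once you append $\K[\Delta]$ in position $-1$ with the truncated differential, $d^0\circ d^{-1}$ no longer vanishes (an edge with exactly one vertex in $V_{\mathrm{int}}$ receives an uncancelled contribution). The paper's interior partition complex avoids this by indexing over \emph{strongly} interior faces, i.e.\ faces all of whose vertices are interior; that family is downward closed, so the truncated differentials do square to zero. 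Second, and more seriously, even after this repair the identification $H^i(Q^\ast)\simeq H^i(\Delta,\Delta_\partial;\K)$ is false, and the failure occurs exactly in the range you need. For a monomial $x^\alpha$ with $\rho=\mathrm{supp}(\alpha)$ a face of $\Delta_\partial$ lying in the closure of $k\ge 2$ facets of $\Sigma$ (e.g.\ a vertex on an interior wall), the fine-graded piece $Q^\ast_\alpha$ computes the reduced cohomology of the union of open stars of interior vertices in $\mathrm{st}_\rho\Delta$, which is a disjoint union of $k$ contractible pieces, one per facet; hence $H^0(Q^\ast_\alpha)\simeq\K^{k-1}\neq 0$. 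Running this through the row computation of \Cref{thm:total_partition_complex}, a monomial of degree $j-1$ with this defect contributes to $H^{d}(\mathrm{Tot})_j$ and hence to the kernel $H^{-1}(Q^\ast/\langle\Theta\rangle)_j$ in precisely the degrees $j\le d/2$ you are trying to control. This is a connectivity problem, not an inducedness problem, so the hypothesis on $\partial\sigma$ for $\dim\sigma\ge d/2$ cannot rescue it; your ``delicate case'' paragraph is where the proof actually breaks.

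The paper circumvents this by factoring the map through $\bigoplus_{\tau\in\Sigma^{(d)}}\K[\tau]/\langle\Theta\rangle$ and treating the two stages separately. Within a single subdivided facet the interior is connected, so the fine-graded acyclicity argument of \Cref{thm:interior_partition_of_unity} goes through and gives injectivity of $\K[\tau]/\langle\Theta\rangle\to\bigoplus_{v\in\tau^\circ}\K[\mathrm{st}_v\tau]/\langle\Theta\rangle$ (and $\mathrm{st}_v\Delta=\mathrm{st}_v\tau$ for such $v$). The gluing across walls is then handled by the complex $\tilde P^\ast(\Sigma)$ indexed by the cells of $\Sigma$ in decreasing dimension, whose columns $\K[\rho]\otimes K^\ast(\Theta)$ for lower-dimensional cells are not exact to the top; their exactness in degrees $\le d/2$ is exactly what the Koszul--Lefschetz lemma (\Cref{cor:subd_Koszul}) provides, and this---not the topology of links in $\Delta_\partial$---is where both the $\dim\sigma\ge d/2$ hypothesis and the $d/2$ cutoff in the conclusion come from. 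Your proposal never invokes the Lefschetz property of the lower-dimensional cells at all, which is a sign that the essential mechanism is missing. (Your final topological computation is fine: by excision $H^i(\Delta,\Delta_\partial;\K)\simeq\bigoplus_{\sigma\in\Sigma^{(d)}}H^i(\sigma,\partial\sigma;\K)$ vanishes for $i<d$; but it is applied to a cohomology group that $Q^\ast$ does not compute.)
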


\subsection{Complexes and subdivisions}
We use the following
definition of a ball complex, which is common in PL topology (we refer to \cite{RourkeSanders-new} for a basic introduction). By polyhedron
we mean a topological space homeomorphic to the realization of a finite
simplicial complex.
\begin{defn}
	A ball complex is a finite set $\Sigma$ of closed disks covering
	a polyhedron such that if $\sigma_{1},\sigma_{2}\in\Sigma$ then:
	\begin{enumerate}
		\item $\sigma_{1}^{\circ}\cap\sigma_{2}^{\circ}=\emptyset$, and
		\item Each of $\partial\sigma_1$ and $\sigma_{1}\cap\sigma_{2}$ is a union
		of disks in $\Sigma$.
	\end{enumerate}
\end{defn}
More general definitions are possible. For instance, one can relax
finiteness to local finiteness.
\begin{defn}
	A simplicial subdivision $\Delta$ of a complex $\Sigma$ is a simplicial
	complex triangulating the polyhedron $\bigcup_{\sigma\in\Sigma}\sigma$
	in such a way that each face of $\Sigma$ is the image of a closed
	subcomplex.
	
	The subdivision \emph{has induced boundaries} if for each $\sigma\in\Sigma$
	the subdivision $\Delta$ induces on $\partial\sigma$ is an induced
	subcomplex of $\Delta$.
\end{defn}

\subsection{Partition of unity for disks with induced boundary}

Let $\Delta$ be a triangulated disk of dimension $d$, and suppose
$\partial\Delta$ is an induced subcomplex. Then a strengthened partition
of unity theorem holds: fewer direct summands are required in the
partition complex. In particular, it suffices to replace $P^{0}$
with a sum over stars of interior vertices only. This is useful for
us because the links of such vertices have the Lefschetz property,
being homology spheres. 

We denote $\Delta^{\circ}=(\Delta,\partial\Delta)$. A \emph{strongly interior face} $\tau\in\Delta$ is one for which
$\tau^{(0)}\subset \Delta^{\circ}$. That is, each vertex of $\tau$ is an interior vertex
(this is stronger than $\tau$ being an interior face, which merely
means it is not entirely contained in $\partial\Delta$). The interior
partition complex $P_{\mathrm{int}}^\ast=P_{\mathrm{int}}^\ast(\Delta)$
is
\[
0\rightarrow\K[\Delta]\rightarrow\bigoplus_{\substack{v\in\Delta^{(0)}, \\ v\in\Delta^{\circ}}}\K[\mathrm{st}_{v}\Delta]\rightarrow\bigoplus_{\substack {e\in\Delta^{(1)}, \\e^{(0)}\subset\Delta^{\circ}}}\K[\mathrm{st}_{e}\Delta]\rightarrow\ldots\rightarrow\bigoplus_{\substack{\tau\in\Delta^{(d)}, \\ \tau^{(0)}\subset\Delta^{\circ}}}\K[\mathrm{st}_{\tau}\Delta]\rightarrow0,
\]
with maps induced by those of the corresponding \v{C}ech complex.
\begin{thm}
	\label[theorem]{thm:interior_partition_of_unity}Let $\Delta$ a triangulated
	disk of dimension $d$ with $\partial\Delta$ induced and $\Theta$
	a generic linear system of parameters in $\K[\Delta]_{1}$. Then $P_{\mathrm{int}}^\ast$
	is exact, and the map
	\[
	\K[\Delta]/\langle \Theta\rangle \rightarrow\bigoplus_{v\in\Delta^{(0)},v\in\Delta^{\circ}}\K[\mathrm{st}_{v}\Delta]/\langle \Theta\rangle 
	\]
	is injective beneath degree $d+1$.
\end{thm}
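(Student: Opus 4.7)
The plan is to establish exactness of $P^\ast_{\mathrm{int}}$ via the fine grading (reducing to a topological contractibility statement), and then to deduce the injectivity assertion from a double-complex argument with the augmented Koszul complex, in the style of the partition of unity theorem.

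For the exactness, I decompose $P^\ast_{\mathrm{int}}$ by the fine grading: for a monomial $x^\alpha$ with support $\sigma=\mathrm{supp}(\alpha)$, the piece $(P^\ast_{\mathrm{int}})_\alpha$ is naturally the reduced simplicial cochain complex of the simplicial complex $X_\sigma$ whose faces are the strongly interior $\tau$ with $\sigma\cup\tau\in\Delta$. So it suffices to show $X_\sigma$ is contractible for every $\sigma\in\Delta$. If $\sigma$ contains some interior vertex $v$, then for any $\tau\in X_\sigma$ one has $\tau\cup\{v\}\subseteq\tau\cup\sigma\in\Delta$ and $\tau\cup\{v\}$ is still strongly interior, so $X_\sigma$ is a cone with apex $v$. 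Otherwise $\sigma\subseteq\partial\Delta^{(0)}$, and the induced-boundary hypothesis forces $\sigma\in\partial\Delta$; in this case a direct check (again using that $\partial\Delta$ is induced) shows that $\mathrm{lk}_\sigma\Delta$ is itself a triangulated disk with induced boundary and that $X_\sigma$ coincides with $(\mathrm{lk}_\sigma\Delta)_\mathrm{int}$, the full subcomplex on the interior vertices of $\mathrm{lk}_\sigma\Delta$. I then apply the nerve lemma to the cover of the open disk $|\mathrm{lk}_\sigma\Delta|\setminus|\partial\mathrm{lk}_\sigma\Delta|$ by the open stars of its interior vertices: this is a good cover (pairwise intersections are open stars of faces, hence contractible or empty) that genuinely covers the open disk, because every facet of a disk with induced boundary contains an interior vertex (else it would lie in $\partial\Delta$ by induced-ness). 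Hence the nerve $X_\sigma$ is homotopy equivalent to the open disk, which is contractible.

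For the injectivity, consider the double complex $\tilde C^{\ast,\ast}=P^\ast_{\mathrm{int}}\otimes\tilde K^\ast(\Theta)$ with $\tilde K^\ast$ the augmented Koszul complex. Each column $\tilde C^{i,\ast}$ has the form $\K[\mathrm{st}_\tau\Delta]\otimes\tilde K^\ast$ for some $\tau\in\Delta$ (with $\tau=\emptyset$ in the column $i=-1$); since every such $\mathrm{st}_\tau\Delta$ is Cohen-Macaulay (either the whole disk, or a cone over a sphere), the augmented Koszul complex on it is exact, so all columns are exact and $\mathrm{Tot}(\tilde C^{\ast,\ast})$ is exact. At the same time, each row $\tilde C^{\ast,j}$ with $j\le d+1$ equals $P^\ast_{\mathrm{int}}\otimes K^j$, which is exact by the first part together with freeness of $K^j$, so $\mathrm{Tot}(\tilde C^{\ast,\ast\le d+1})$ is also exact. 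The mapping cone of $\mathfrak U^{d+1}$ yields the short exact sequence
\[0\to\tilde C^{\ast-d-2,d+2}\to\mathrm{Tot}(\tilde C^{\ast,\ast})\to\mathrm{Tot}(\tilde C^{\ast,\ast\le d+1})\to 0,\]
whose long exact sequence forces $H^\ast(\tilde C^{\ast,d+2})=H^\ast(P^\ast_{\mathrm{int}}/\langle\Theta\rangle)=0$ in every degree. In particular the $(-1)$-th cohomology vanishes, which is exactly the kernel of the map $\K[\Delta]/\langle\Theta\rangle\to\bigoplus_{v}\K[\mathrm{st}_v\Delta]/\langle\Theta\rangle$ (over interior vertices $v$), giving the desired injectivity.

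The main obstacle is the topological case $\sigma\in\partial\Delta$ in the first step: the induced-boundary hypothesis is used critically, both to identify $X_\sigma$ with $(\mathrm{lk}_\sigma\Delta)_\mathrm{int}$ and to guarantee that the open stars of interior vertices cover the open disk. Everything else closely parallels the partition of unity and Reisner-type arguments already developed in the paper.
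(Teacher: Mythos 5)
Your argument is correct and follows essentially the same route as the paper's: exactness of $P^\ast_{\mathrm{int}}$ via the fine grading, with the same two-case split (cone over an interior vertex of $\sigma$, versus the nerve/good-cover argument on $\mathrm{lk}_\sigma\Delta$ when $\sigma\in\partial\Delta$), and injectivity deduced by the same mapping-cone argument with the augmented Koszul complex that underlies the partition of unity theorem. The only cosmetic differences are that you write out the $\mathfrak{U}^{d+1}$ mapping-cone step explicitly — observing that all columns of $P^\ast_{\mathrm{int}}\otimes\tilde K^\ast$ are exact, including the $(-1)$-st column since a disk is Cohen--Macaulay — where the paper simply asserts that the partition-of-unity proof ``applies verbatim'' and simplifies; and that you phrase the boundary case via the intrinsic interior vertices of $\mathrm{lk}_\sigma\Delta$ (which, as you note, coincide with the $\Delta$-interior vertices precisely because $\partial\Delta$ is induced), while the paper works with the interior of $|\mathrm{st}_\rho\Delta|$ directly.
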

\begin{proof}
	Once we prove exactness, the theorem is clear using our previous results:
	for instance, the proof of partition of unity applies verbatim (and
	can be simplified: all total complexes are exact because all rows
	are).
	
	Consider a monomial $x^{\alpha}\in\K[\Delta]$ of degree $j$ and
	denote $\rho=\mathrm{supp}(\alpha)$, noting $\dim\rho=j-1$. Then
	the restriction of $x^{\alpha}$ to $\K[\mathrm{st}_{\tau}\Delta${]}
	does not vanish precisely when $\rho\cup\tau\in\Delta$. Let us consider
	the set of all such strongly interior faces $\tau$: it is clearly
	downwards closed, so it forms a subcomplex $\Sigma$ of $\Delta$.
	Since it contains the trivial face it is never the void complex. We
	split the proof into cases.
	\begin{itemize}
		\item If $\rho$ is an interior face, it has at least one interior vertex
		$v$: if all vertices are in $\partial\Delta$ then $\rho\subset\partial\Delta$,
		since $\partial\Delta$ is induced. In this case $\Sigma=\mathrm{st}_{v}\Sigma$:
		each $\sigma\in\Sigma$ satisfies $\sigma\cup\rho\in\Delta$, and
		in particular $\sigma\cup\{v\}\in\Delta$. Thus $\Sigma$ deformation
		retracts onto $v$ and is acyclic.
		\item If $\rho$ is a boundary face, its link contains at least one interior
		vertex (otherwise $\mathrm{st}_{\rho}\Delta=\rho\ast\mathrm{lk}_{\rho}\Delta$
		is entirely contained in the boundary, which has dimension $d-1$,
		but a triangulated disk is pure). The union of the open stars of interior
		vertices of $\mathrm{lk}_{\rho}\Delta$ within $\mathrm{st}_{\rho}\Delta$
		is then the entire $\left|\Delta\right|^{\circ}\cap\left|\mathrm{st}_{\rho}\Delta\right|$.
		Indeed, if if $x\in\left|\Delta\right|^{\circ}$ is an interior point,
		it is in the interior of some face $\tau\in\Delta$ which must have
		an interior vertex (else $\left|\tau\right|$ is contained in the
		boundary by the inducedness assumption). It follows that 
		\[
		\left\{ \mathrm{st}_{\tau}^{\circ}(\mathrm{lk}_{\rho}\Delta)\mid\tau=\left\{ v_{i_{1}},\ldots,v_{i_{k}}\right\} \text{ is a set of interior vertices of }\mathrm{lk}_{\rho}\Delta\right\} 
		\]
		\[
		=\left\{ \mathrm{st}_{\tau}^{\circ}(\mathrm{lk}_{\rho}\Delta)\mid\tau\in\Sigma\right\} 
		\]
		covers $\left|\Delta\right|^{\circ}\cap\left|\mathrm{lk}_{\rho}\Delta\right|$.
		Further, this collection of open stars forms a good cover, since the
		intersection of open stars of strongly interior faces of $\mathrm{lk}_{\rho}\Delta$
		is again the open star of a strongly interior face of $\mathrm{lk}_{\rho}\Delta$,
		and open stars are contractible. Since $\left|\mathrm{lk}_{\rho}\Delta\right|$
		is homotopy equivalent to its interior $\left|\Delta\right|^{\circ}\cap\left|\mathrm{lk}_{\rho}\Delta\right|$,
		the complex $\Sigma$ is contractible.
	\end{itemize}
\end{proof}
\begin{rem}
	To see the homotopy equivalence between $\left|\Delta\right|^{\circ}\cap\left|\mathrm{lk}_{\rho}\Delta\right|$
	and $\left|\mathrm{lk}_{\rho}\Delta\right|$, notice $\left|\mathrm{st}_{\rho}\Delta\right|-\left|\rho\right|\overset{\mathrm{homeo.}}{\simeq}\left|\mathrm{lk}_{\rho}\Delta\right|\times\left|\rho\right|\times(0,1]$.
	This is in turn homotopy equivalent to both $\left|\mathrm{lk}_{\rho}\Delta\right|$
	and $\left|\mathrm{lk}_{\rho}\Delta\right|\times\left|\rho\right|\times(0,1)$.
	The latter can be identified with the open subset $\left|\mathrm{st}_{\rho}^{\circ}\Delta\right|-\left|\rho\right|$
	of $\left|\Delta\right|$, and is thus a manifold with boundary, so
	the collaring theorem applies. Its interior is $(\left|\Delta\right|^{\circ}\cap\left|\mathrm{lk}_{\rho}\Delta\right|)\times\left|\rho\right|\times(0,1)$,
	which is homotopy equivalent to $\left|\Delta\right|^{\circ}\cap\left|\mathrm{lk}_{\rho}\Delta\right|$.
\end{rem}

\subsection{The Koszul complex and the Lefschetz property}

Our goal is to establish the following lemma, a strengthening of \Cref{thm:Koszul}.
\begin{lem}
	Suppose $\Delta$ is a Cohen-Macaulay complex of dimension $k $ and
	$\Theta$ is a generic sequence of $d+1\ge k+1$ elements in $\K[\Delta]_{1}$.
	Consider $K^\ast(\Theta)$ with the grading in which $\alpha\cdot e_{i_{1}}\wedge\ldots\wedge e_{i_{t}}$
	has degree $\deg\alpha$ for any $\alpha\in\K[\Delta]$. If $\K[\Delta]$
	has the generic Lefschetz property, then $H^{i}(K^\ast(\Theta))=0$
	for all $i\le k$ and $H^{i}(K^\ast)_{j}=0$ for all $i>k$
	and $j\le\frac{k}{2}$.
\end{lem}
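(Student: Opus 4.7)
The first assertion $H^i(K^\ast(\Theta)) = 0$ for $i \le k$ is immediate from \Cref{thm:Koszul} applied to $M=\K[\Delta]$: the Cohen-Macaulay assumption on $\K[\Delta]$ of Krull dimension $k+1$ guarantees that a generic $(k+1)$-subset of the $\theta_i$ forms a regular $\K[\Delta]$-sequence contained in $\langle\Theta\rangle$, and \Cref{thm:Koszul} identifies this length with the minimal $i$ for which $H^i$ is nonzero.

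For the second assertion, I would reduce the problem to a Koszul-cohomology computation over the Artinian reduction $A\coloneqq \K[\Delta]/\langle\Theta'\rangle$, where $\Theta' = (\theta_1,\ldots,\theta_{k+1})$. Since the Koszul complex factors as $K^\ast(\Theta) \simeq K^\ast(\Theta')\otimes_{\K[\Delta]} K^\ast(\Theta'')$ with $\Theta'' = (\theta_{k+2},\ldots,\theta_{d+1})$, and $K^\ast(\Theta')$ is a free $\K[\Delta]$-resolution of $A$ by the regularity of $\Theta'$, the tensor product is quasi-isomorphic to $A\otimes K^\ast(\Theta'')$, shifted in cohomological degree by $k+1$. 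Tracking the grading of the lemma carefully one obtains
\[
H^n(K^\ast(\Theta))_j \;\simeq\; H^{n-k-1}\bigl(K_A^\ast(\bar\Theta'')\bigr)_j \qquad \text{for } n \ge k+1,
\]
where $\bar\Theta''$ denotes the image of $\Theta''$ in $A$. The task therefore reduces to showing that the Koszul cohomology of $A$ with respect to the generic linear sequence $\bar\Theta''$ vanishes in internal degree $\le k/2$.

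This final vanishing is proved by induction on the length of $\bar\Theta''$, using at each stage the short exact sequence of complexes from the Remark after \Cref{thm:Koszul}: adjoining the variable $\bar\theta_{m+1}$ yields
\[
0 \to K_A^\ast(\bar\theta_{k+2},\ldots,\bar\theta_m)^{\ast-1} \to K_A^\ast(\bar\theta_{k+2},\ldots,\bar\theta_{m+1}) \to K_A^\ast(\bar\theta_{k+2},\ldots,\bar\theta_m) \to 0,
\]
and the associated long exact sequence in cohomology has connecting map equal to multiplication by $\bar\theta_{m+1}$, raising internal degree by one. The generic Lefschetz property implies that multiplication by a generic $\ell \in A_1$ is injective on $A_j$ whenever $j \le k/2$, since injectivity of $\ell^{k+1-2j}$ entails that of $\ell$. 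The core of the argument is to propagate this injectivity through the successive Koszul cohomology groups, so that the connecting maps in the long exact sequences become injective in the relevant range of $j$ and the vanishing follows inductively. The hardest part is precisely this Lefschetz-propagation step; in the motivating context of sphere and ball subdivisions it is supplied by the Poincar\'e duality of $A$ (\Cref{thm:pd}) together with the inherited Lefschetz structure on the intermediate quotients, while the rest of the argument is essentially bookkeeping in the long exact sequences.
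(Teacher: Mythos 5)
The high-level shape of your argument -- induct on the length of the Koszul sequence via the mapping-cone short exact sequence $0 \to K^{\ast-1}_m \to K^\ast_{m+1} \to K^\ast_m \to 0$, whose connecting map is multiplication by the new element, and use the generic Lefschetz property to kill the relevant kernel -- matches the paper's proof. Your detour through the Artinian reduction $A = \K[\Delta]/\langle\Theta'\rangle$ via the K\"unneth factorization $K^\ast(\Theta) \simeq K^\ast(\Theta')\otimes K^\ast(\Theta'')$ and the resulting quasi-isomorphism with $K_A^\ast(\bar\Theta'')[-(k+1)]$ is a correct and reasonable reformulation; the paper instead keeps all the complexes over $\K[\Delta]$ and applies the same SES directly, with the identification $H^{k+1}(K^\ast_{k+1}) \simeq A$ appearing at the base of the induction rather than being factored out at the start.

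The genuine gap is in your last paragraph. You say the ``Lefschetz-propagation step'' is the hardest part and that it ``is supplied by the Poincar\'e duality of $A$'' together with an ``inherited Lefschetz structure on the intermediate quotients.'' This is not how the argument works, and for general Cohen-Macaulay $\Delta$ it cannot: the lemma assumes only that $\K[\Delta]$ is Cohen-Macaulay with the generic Lefschetz property, and $A$ need not be a Poincar\'e duality algebra (for instance, when $\Delta$ is a ball, or more generally any CM complex that is not a sphere). The paper itself never invokes \Cref{thm:pd} at this point. The propagation you are worried about is in fact trivial: once the base case $m=k+1 \to k+2$ produces vanishing in internal degree $\le k/2$ (the kernel of a generic $\theta_{k+2}\cdot$ on $A_j$ is zero for $j\le k/2$, because $\theta_{k+2}$ is a factor of the injective $\ell^{k+1-2j}$), the long exact sequence for the next step sandwiches $H^{i}(K^\ast_{m+1})_j$ between two groups that have \emph{already} been shown to vanish in degree $j\le k/2$, so no further use of the Lefschetz hypothesis is needed. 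There is no propagation of injectivity through later Koszul groups -- they are simply zero in the relevant degrees -- and Poincar\'e duality plays no role. Notice also that what one actually needs downstream (\Cref{cor:subd_Koszul}) is only the case $i=k+1$, which comes entirely from the kernel statement; this is precisely the $H^0(K_A^\ast(\bar\Theta''))$ vanishing you correctly observed follows directly from Lefschetz injectivity on $A$.
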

We will soon be dealing with double complexes in which the columns
are not exact up to the last position, but we still want a partition
of unity theorem to hold, at least in low degrees. The lemma gives
the necessary exactness property. It is simplest to prove with the "naive" grading described in the statement - the corollary contains the statement we will use later.
\begin{proof}
	We return to a remark at the end of \Cref{sec:Koszul}. There is a
	simple relation between $K^\ast(\theta_{1},\ldots,\theta_{j})$ and
	$K^\ast(\theta_{1},\ldots,\theta_{j+1})$: the latter is isomorphic
	to the mapping cone of the chain map
	\[
	K^\ast(\theta_{1},\ldots,\theta_{j})\rightarrow K^\ast(\theta_{1},\ldots,\theta_{j})
	\]
	induced from multiplication by $\theta_{j+1}$. More explicitly, there
	is a short exact sequence of chain complexes of the form
	\[
	0\rightarrow K^{\ast-1}(\theta_{1},\ldots,\theta_{j})\overset{\iota}{\rightarrow}K^\ast(\theta_{1},\ldots,\theta_{j+1})\overset{\pi}{\rightarrow}K^\ast(\theta_{1},\ldots,\theta_{j})\rightarrow0
	\]
	where the maps are given by 
	\[
	\iota(\alpha\cdot e_{i_{1}}\wedge\ldots\wedge e_{i_{t-1}})=\alpha\cdot e_{i_{1}}\wedge\ldots\wedge e_{i_{t-1}}\wedge e_{j+1}
	\]
	and, on basis elements,
	\[
	\pi(e_{i_{1}}\wedge\ldots\wedge e_{i_{t}})=\begin{cases}
	0 & j+1\text{ is among }i_{1},\ldots,i_{t}\\
	e_{i_{1}}\wedge\ldots\wedge e_{i_{t}} & \text{otherwise}.
	\end{cases}
	\]
	It can be verified the connecting homomorphism is $\theta_{j+1}\cdot$
	up to sign. Denoting $K_{j}^\ast=K^\ast(\theta_{1},\ldots,\theta_{j})$
	and similarly $K_{j+1}^\ast$, this means there is a long exact sequence:
	\[
	\ldots\rightarrow H^{i}(K_{j}^\ast)\overset{\pm\theta_{j+1}\cdot}{\rightarrow}H^{i}(K_{j}^\ast)\rightarrow H^{i+1}(K_{j+1}^\ast)\rightarrow H^{i+1}(K_{j}^\ast)\overset{\mp\theta_{j+1}\cdot}{\rightarrow}H^{i+1}(K_{j}^\ast)\rightarrow\ldots
	\]
	In particular, for $j=k+1$, $H^{i}(K_{k+1}^\ast)=0$ unless
	$i=k+1$, in which case the homology is $\K[\Delta]/\langle \theta_{1},\ldots,\theta_{k+1}\rangle $.
	We see $H^{i}(K_{k+2}^\ast)\neq0$ only if $i=k+1$ or $i=k+2$, in
	which case it fits into a short exact sequence in which the other
	nonzero members are either $0$, or the kernel and cokernel of multiplication
	by $\theta_{k+1}$. These are, more explicitly,
	\[
	0\rightarrow H^{k+1}(K_{k+2}^\ast)\rightarrow\ker\left[\K[\Delta]/\langle \theta_{1},\ldots,\theta_{k+1}\rangle \overset{\theta_{k+2}\cdot}{\rightarrow}\K[\Delta]/\langle \theta_{1},\ldots,\theta_{k+1}\rangle \right]\rightarrow0
	\]
	and
	\[
	0\rightarrow\operatorname{coker}\left[\K[\Delta]/\langle \theta_{1},\ldots,\theta_{k+1}\rangle \overset{\theta_{k+2}\cdot}{\rightarrow}\K[\Delta]/\langle \theta_{1},\ldots,\theta_{k+1}\rangle \right]\rightarrow H^{k+1}(K_{k+2}^\ast)\rightarrow0.
	\]
	For $j>k+1$, by induction, each $H^{i}(K_{j+1}^\ast)$
	fits into a short exact sequence of $\K [\Delta]$-modules
	which vanish in each degree $\le\frac{k}{2}$, and is $0$ unless
	$i\ge k+1$.
\end{proof}
\begin{cor}\label[corollary]{cor:subd_Koszul}
	Under the conditions of the previous lemma, consider $K^\ast(\Theta)$
	with the grading in which $\deg(\alpha\cdot e_{i_{1}}\wedge\ldots\wedge e_{i_{t}})=\deg(\alpha)+d+1-t$.
	Then $H^{i}(K^\ast)_{j}=0$ for all $i\le k+1$ and $j\le\frac{d}{2}$.\label[corollary]{cor:subd_koszul}
\end{cor}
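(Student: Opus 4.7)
The plan is to deduce this directly from the previous lemma by a degree shift. The two gradings on $K^i(\Theta)$ differ only by a constant depending on the column index: in the naive grading an element $\alpha\cdot e_{i_{1}}\wedge\ldots\wedge e_{i_{t}}\in K^{t}$ has degree $\deg(\alpha)$, while in the new grading it has degree $\deg(\alpha)+d+1-t$. Writing $j_{\mathrm{old}}$ and $j_{\mathrm{new}}$ for the two degree indices on $K^i$, the piece of $H^{i}(K^{\ast})$ in new degree $j_{\mathrm{new}}$ coincides with the piece in naive degree
\[
j_{\mathrm{old}}\ =\ j_{\mathrm{new}}+i-d-1.
\]

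First I would dispose of the range $i\le k$: the previous lemma asserts that $H^{i}(K^{\ast})$ vanishes entirely in that range, so the conclusion is automatic regardless of the grading. The only remaining case is $i=k+1$. Assuming $j_{\mathrm{new}}\le d/2$, the relation above gives
\[
j_{\mathrm{old}}\ =\ j_{\mathrm{new}}+k-d\ \le\ \tfrac{d}{2}+k-d\ =\ k-\tfrac{d}{2}.
\]
The hypothesis $d+1\ge k+1$ says $d\ge k$, hence $k-\tfrac{d}{2}\le\tfrac{k}{2}$, so $j_{\mathrm{old}}\le\tfrac{k}{2}$. By the previous lemma this forces $H^{k+1}(K^{\ast})_{j_{\mathrm{old}}}=0$, which is what we wanted.

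There is genuinely no obstacle here: the corollary is bookkeeping, and the only point to check is that the shift $d-k$ between the two gradings combines correctly with the cutoff $k/2$ from the previous lemma, which is precisely where $d\ge k$ is used. The value of this reformulation is that the new grading is the one in which the Koszul differential has degree zero when $K^{\ast}(\Theta)$ is tensored into a column of a double complex $P^{\ast}\otimes K^{\ast}$; this is the convention under which \Cref{thm:subdivisions_almost_lefschetz} will use the corollary, replacing the exactness of stars (which was the only ingredient making the columns of $P^{\ast}\otimes K^{\ast}$ exact in the proof of partition of unity) with the weaker exactness in low degrees supplied here.
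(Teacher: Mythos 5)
The proposal is correct and matches the paper's own proof essentially step for step: both reduce the statement to the previous lemma via the degree shift $j_{\mathrm{old}} = j_{\mathrm{new}} + i - d - 1$, dispose of $i \le k$ by the total vanishing of $H^i(K^\ast)$, and for $i = k+1$ use $d \ge k$ to see that $j_{\mathrm{new}} \le d/2$ forces $j_{\mathrm{old}} \le k/2$. Your write-up is if anything a little cleaner (you explicitly phrase the shift as $j_{\mathrm{old}} \le k/2$ rather than arguing the contrapositive), and the closing remark about why the regraded Koszul complex is the relevant one for the double complex $P^\ast\otimes K^\ast$ correctly captures how the corollary is used downstream.
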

\begin{proof}
	Consider a cycle $z$ in $H^{i}(K^\ast)_{j}$: it is a sum
	of elements of the form $\alpha e_{t_{1}}\wedge\ldots\wedge e_{t_{i}}$,
	with $\deg(\alpha)=i+j-d-1$. We have $\deg(\alpha)>\frac{k}{2}$
	by the previous theorem, so
	\[
	i+j>d+1+\frac{k}{2}.
	\]
	If $i<k+1$ we already know $H^{i}(K^\ast)=0$. If $i=k+1$,
	this implies $j>d-\frac{k}{2}\ge\frac{d}{2}$. 
\end{proof}

\subsection{A modified partition complex}

Consider complex $\Sigma$ as in the theorem. It has a cellular chain
complex, which computes its homology. We will work with a similar
chain complex, with ``augmentation'' at the wrong end. For $\sigma\in\Sigma$,
let us denote by $\K[\sigma]$ the face ring of the subdivision induced
on $\sigma$ by $\Delta$. The complex is:
\[
\tilde{P}^\ast(\Sigma)=0\rightarrow\K[\Delta]\rightarrow\bigoplus_{\tau\in\Sigma^{(d)}}\K[\tau]\rightarrow\bigoplus_{\rho\in\Sigma^{(d-1)}}\K[\rho]\rightarrow\ldots\rightarrow\bigoplus_{v\in\Sigma^{(0)}}\K\rightarrow0.
\]
Note that without the $\K[\Delta]$ at the beginning, the $0$-th
graded piece computes $H_{\ast}(\Sigma;\K)$. Nevertheless we keep $\tilde{P}^\ast$
cohomologically graded in order to make preceding arguments easier
to use in this setting. The main result on this complex is the following.
\begin{thm}
	Let $\Theta$ be a generic sequence of $d+1$ elements in $\K[\Delta]_{1}$.
	The kernel $H^{-1}(\tilde{P}^\ast/\langle \Theta\rangle )$
	of $\K[\Delta]/\langle \Theta\rangle \rightarrow\bigoplus_{\tau\in\Sigma^{(d)}}\K[\tau]/\langle \Theta\rangle $
	vanishes in each degree $i\le\frac{d}{2}$.
\end{thm}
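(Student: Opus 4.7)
The plan is to adapt the double-complex strategy used in the proofs of partition of unity and Schenzel's formula, replacing $P^\ast$ by the modified complex $\tilde P^\ast(\Sigma)$. I would form the double complex $C^{\ast,\ast}=\tilde P^\ast(\Sigma)\otimes\tilde K^\ast(\Theta)$ using the augmented Koszul complex, graded so that all differentials are of degree zero. The top row $C^{\ast,d+2}$ is exactly $\tilde P^\ast/\langle\Theta\rangle$, so the target becomes $H^{-1}(C^{\ast,d+2})_{j}=0$ for $j\le d/2$.

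First I would analyze the columns. The column $C^{-1,\ast}=\K[\Delta]\otimes\tilde K^\ast$ is fully acyclic: $\Sigma$ being Cohen--Macaulay implies that its simplicial subdivision $\Delta$ is topologically (hence, by Reisner's theorem, algebraically) Cohen--Macaulay, so $K^\ast\otimes\K[\Delta]$ has cohomology only in the top position $d+1$, where the augmentation kills it. The columns $C^{i,\ast}$ with $i\ge 0$ split as $\bigoplus_{\sigma\in\Sigma^{(d-i)}}\K[\sigma]\otimes\tilde K^\ast$, with each $\K[\sigma]$ a Cohen--Macaulay face ring of a simplicial subdivision of a disk. For $\dim\sigma\ge d/2$, the induced-boundary hypothesis on $\sigma$ lets me invoke the main theorem inductively on the simplicial subdivision of $\sigma$ (a strictly lower-dimensional complex), providing the Lefschetz-type input required by \Cref{cor:subd_Koszul} and yielding vanishing of $H^i(K^\ast\otimes\K[\sigma])_{j}$ in the range $j\le d/2$; for $\dim\sigma<d/2$, the Cohen--Macaulay property of $\K[\sigma]$ alone already forces the Koszul cohomology to vanish in the relevant range of Koszul degrees, since $\dim\sigma+1\le d/2$. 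Simultaneously, passing to the fine grading, each row $C^{\ast,s}$ with $s\le d+1$ reduces in every nonzero fine degree $\alpha$ to the cellular cochain complex of the closed star in $\Sigma$ of the minimal face of $\Sigma$ containing $\mathrm{supp}(\alpha)$, which is acyclic by the Cohen--Macaulay assumption on $\Sigma$; in fine degree zero it gives the (reduced) cellular chain complex of $\Sigma$ itself, again controlled.

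I would then combine the two pictures via the $\mathfrak U^{d+1}$ mapping cone, producing the short exact sequence
\[0\rightarrow C^{\ast-d-2,d+2}\rightarrow\mathrm{Tot}(C^{\ast,\ast})^\ast\rightarrow\mathrm{Tot}(C^{\ast,\ast\le d+1})^\ast\rightarrow 0.\]
Column vanishing together with \Cref{lem:exactness} forces $\mathrm{Tot}(C^{\ast,\ast})$ to vanish in low degree, and the row analysis forces $\mathrm{Tot}(C^{\ast,\ast\le d+1})$ to vanish in the position matching $H^{-1}$ of the top row under the index shift. The long exact sequence in cohomology then identifies $H^{-1}(\tilde P^\ast/\langle\Theta\rangle)_{j}$, for $j\le d/2$, with a vanishing term, yielding the theorem.

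The hardest step will be the column vanishing for $i\ge 0$: threading together the inductive Lefschetz-type hypothesis on faces of $\Sigma$ of dimension at least $d/2$ (where the induced-boundary condition applies) with the purely Cohen--Macaulay argument on smaller-dimensional faces, and verifying that both yield Koszul vanishing in a range wide enough to match the truncation $\tilde K^{\le d+1}$ in the subsequent mapping cone step. Once this bookkeeping is under control, the mapping cone argument is a near-verbatim repetition of the partition of unity proof.
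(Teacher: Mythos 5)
Your overall architecture (tensor $\tilde P^\ast$ with a Koszul complex, analyze rows and columns, splice with an $\mathfrak U$-type mapping cone) is the paper's, but there is a genuine gap in the row analysis, and it is exactly the point where this theorem differs from the partition of unity theorem. The rows of $\tilde P^\ast\otimes K^\ast$ are \emph{not} exact. In a nonzero fine degree $\alpha$ with $\rho=\mathrm{supp}(\alpha)$, the summands of $\tilde P^\ast$ in which $x^\alpha$ survives are indexed by the faces of $\Sigma$ containing the minimal face $\sigma_0\supseteq\rho$; this is not the full cellular cochain complex of the closed star (which would indeed be acyclic) but only the part above $\sigma_0$, which computes the homology of $\mathrm{lk}_\rho\Delta$. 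Cohen--Macaulayness kills this only \emph{below} the top dimension; the top homology of the link survives at position $0$ of $\tilde P^\ast$. Likewise in fine degree zero the complex ends with $\bigoplus_{v\in\Sigma^{(0)}}\K$ and $H_0(\Sigma)$ survives at position $d$ (the augmentation sits at the wrong end to kill it). So $H^i(\tilde P^\ast)$ can be nonzero for $i=0$ and $i=d$, and your concluding step --- ``the row analysis forces $\mathrm{Tot}(C^{\ast,\ast\le d+1})$ to vanish in the position matching $H^{-1}$ of the top row'' --- does not follow. The paper devotes a separate lemma precisely to proving $H^{d}(\mathrm{Tot}(\tilde P^\ast\otimes K^\ast))_i=0$ for $i\le d/2$, and the argument is not a row-exactness argument: one shows the connecting homomorphism of the $\mathfrak U^{d}$ mapping cone is injective in these degrees (a cycle of total degree $d$ with no top-row component lies on the diagonal where each column $\tilde P^m\otimes K^\ast$, $m\ge 0$, sits in Koszul position $\dim\sigma$ and is exact by Cohen--Macaulayness), whence $H^{d}(\mathrm{Tot})_i$ is a quotient of $H^{-1}(\tilde P^\ast\otimes K^{d+1})_i$, and the latter vanishes simply because $\K[\Delta]\to\bigoplus_{\tau\in\Sigma^{(d)}}\K[\tau]$ is injective at the level of modules. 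Without this step (or a substitute) your proof does not close.

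A secondary inaccuracy: for $\dim\sigma<d/2$ the Cohen--Macaulay property does \emph{not} make $H^m(K^\ast(\Theta)\otimes\K[\sigma])$ vanish for $m>\dim\sigma+1$ --- these groups are genuinely nonzero (already for $\sigma$ a vertex they contain $\bigwedge^m\K^{d+1}$ in degree $d+1-m\le d/2$). What saves the argument is that only Koszul positions $\dim\sigma$ and $\dim\sigma+1$ of each column lie on the two diagonals used by \Cref{lem:exactness}, and at position $\dim\sigma+1$ the graded piece in degrees $j\le d/2$ vanishes for elementary degree reasons when $\dim\sigma<d/2$ (the coefficient would have negative degree); the Lefschetz input via \Cref{cor:subd_Koszul} is needed only for faces of dimension at least $d/2$, which is exactly where the induced-boundary hypothesis makes it available. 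Your bookkeeping should be organized around the relevant diagonals rather than around full column exactness.
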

The idea is similar to the proof of the partition of unity theorem.
We begin by discussing the needed modifications.

\subsubsection{The unreduced partition complex}

Consider a single monomial $x^{\alpha}$, and the summands of $\tilde{P}^\ast$
in which $x^{\alpha}$ is nonzero: these are the faces containing
$\rho=\mathrm{supp}(\alpha)$, together with $\K[\Delta]=\tilde{P}^{-1}$.
As a poset with the inclusion relation, the collection of faces corresponding
to these summands (other than to $\K[\Delta]$ itself) is homotopy
equivalent to $\mathrm{lk}_{\rho}\Delta$, and has no homology beneath
the top dimension $s=d-|\rho|$. Observe that a face of dimension
$s$ in $\mathrm{lk}_{\rho}\Delta$ corresponds to the summand of
some $\sigma\in\Sigma^{(s+|\rho|)}$, and that if $\rho\neq\emptyset$,
it is the reduced homology of $\mathrm{lk}_{\rho}\Delta$ which is
computed by these summands since $\rho$ itself is contained in some
unique minimal face of $\Sigma$. Thus $H^{i}(\tilde{P}^\ast)=0$
unless $i=0$ or $i=d$.

\subsubsection{Homology of the total complex}

Set $C^{\ast,\ast}=\tilde{P}^\ast\otimes K^\ast(\Theta)$. We follow
the strategy of the partition of unity theorem, and ideas from that
proof are used freely here. The grading of $C^{\ast,\ast}$ is also from
that part of the paper; it is the same grading recalled in \Cref{cor:subd_Koszul}. 

The main differences between what follows and the proof of partition
of unity are that we work only with the part of $C^{\ast,\ast}$ in degree
$\le\frac{d}{2}$, and that it is possible that $H^{0}(\tilde{P}^\ast)\neq0$. 

Two observations are necessary at this point. The first is clear:
in low degrees, all columns of the augmented double complex (with
an additional top row $\tilde{P}^\ast/\langle \Theta\rangle $)
are exact. The second is the next lemma.
\begin{lem}
	For each $i\le\frac{d}{2}$, $H^{d}(\mathrm{Tot}(C^{\ast,\ast})^\ast)_{i}=0$.
\end{lem}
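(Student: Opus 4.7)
The plan is to use the augmented Koszul complex $\tilde K^{\ast}(\Theta)$ to form the augmented double complex $\tilde C^{\ast,\ast}=\tilde P^{\ast}\otimes\tilde K^{\ast}(\Theta)$, which extends $C^{\ast,\ast}$ by appending the row $\tilde P^{\ast}/\langle\Theta\rangle$ at vertical position $d+2$. This enlargement fits into a short exact sequence of double complexes
\[
0\to C^{\ast,\ast}\to\tilde C^{\ast,\ast}\to Q^{\ast,\ast}\to 0,
\]
where $Q^{\ast,\ast}$ is concentrated in row $d+2$ with $Q^{\ast,d+2}=\tilde P^{\ast}/\langle\Theta\rangle$, and where the map $\tilde C^{\ast,d+1}\to\tilde C^{\ast,d+2}$ is the natural projection induced by the augmentation $K^{d+1}\twoheadrightarrow\K[\Delta]/\langle\Theta\rangle$.

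First, I would invoke the first of the two observations recorded just before the lemma: in internal degree $i\le d/2$ and cohomological position $k\le d+1$, all columns of $\tilde C^{\ast,\ast}$ are exact. Indeed, each column $\tilde C^{r,\ast}$ with $r\ge 0$ decomposes as $\bigoplus_{\tau\in\Sigma^{(d-r)}}\K[\tau]\otimes\tilde K^{\ast}(\Theta)$, and each $\K[\tau]$ is the face ring of a subdivided ball with induced boundary; it inherits the Lefschetz property inductively via the interior partition of unity theorem combined with Hard Lefschetz for lower-dimensional spheres, and \Cref{cor:subd_Koszul} together with the augmentation killing the top cohomology then delivers the required vanishing. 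For the $r=-1$ column $\K[\Delta]\otimes\tilde K^{\ast}(\Theta)$, Cohen-Macaulayness of $\K[\Delta]$ alone suffices. An application of \Cref{lem:exactness} then yields $H^{k}(\mathrm{Tot}(\tilde C^{\ast,\ast})^{\ast})_{i}=0$ for $k\le d+1$ and $i\le d/2$.

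Second, I would take the long exact sequence in total cohomology associated to the above short exact sequence. Since $Q^{\ast,\ast}$ is concentrated in row $d+2$, one has $\mathrm{Tot}(Q^{\ast,\ast})^{k}=(\tilde P^{\ast}/\langle\Theta\rangle)^{k-d-2}$, so the sequence reads
\[
\cdots\to H^{k}(\mathrm{Tot}(\tilde C^{\ast,\ast})^{\ast})\to H^{k-d-2}(\tilde P^{\ast}/\langle\Theta\rangle)\to H^{k+1}(\mathrm{Tot}(C^{\ast,\ast})^{\ast})\to H^{k+1}(\mathrm{Tot}(\tilde C^{\ast,\ast})^{\ast})\to\cdots.
\]
Setting $k=d-1$, both outer terms vanish in internal degree $i\le d/2$ by the first step (since $d-1,d\le d+1$), so I obtain
\[
H^{d}(\mathrm{Tot}(C^{\ast,\ast})^{\ast})_{i}\ \simeq\ H^{-3}(\tilde P^{\ast}/\langle\Theta\rangle)_{i}\ =\ 0,
\]
since $\tilde P^{\ast}$ is supported in cohomological positions $-1,0,\ldots,d$.

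The main obstacle is the first step: tracking the inductive chain of Lefschetz assertions to justify the column exactness in the correct range. Specifically, one must check that strong Hard Lefschetz for lower-dimensional spheres (the inductive hypothesis) propagates, via the interior partition of unity theorem, to sufficient Lefschetz-type control on each $\K[\tau]$ for $\tau$ a face of $\Sigma$, and that the translation between the two gradings of the Koszul complex in \Cref{cor:subd_Koszul} yields vanishing precisely for $k\le d+1$ and internal degree $\le d/2$. Once this is in place, the rest of the argument is formal manipulation of the long exact sequence.
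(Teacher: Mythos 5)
Your approach has a genuine gap, and it stems from the direction of the short exact sequence you construct. You claim $C^{\ast,\ast}\hookrightarrow\tilde C^{\ast,\ast}$ with cokernel the row $d+2$, but this cannot be a map of double complexes: the identity inclusion at positions $(r,d+1)$ fails to commute with the vertical differential, since in $C^{\ast,\ast}$ the map $C^{r,d+1}\to C^{r,d+2}=0$ is zero, whereas in $\tilde C^{\ast,\ast}$ the map $\tilde C^{r,d+1}\to\tilde C^{r,d+2}$ is the (nonzero) quotient onto $\tilde P^r/\langle\Theta\rangle$. In other words $C^{\ast,\ast}$ is a \emph{quotient} of $\tilde C^{\ast,\ast}$ obtained by truncating the top row, not a subcomplex; the correct short exact sequence is
\[
0\to Q^{\ast,\ast}\to\tilde C^{\ast,\ast}\to C^{\ast,\ast}\to 0.
\]

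Once you flip the arrows, the long exact sequence in total cohomology no longer produces the term $H^{-3}(\tilde P^{\ast}/\langle\Theta\rangle)$. Instead, using $H^{d}(\mathrm{Tot}(Q))=H^{-2}(\tilde P^{\ast}/\langle\Theta\rangle)=0$ and the column exactness of $\tilde C^{\ast,\ast}$ at positions $d$ and $d+1$ in internal degree $i\le d/2$, the sequence collapses to
\[
0\to H^{d}(\mathrm{Tot}(C^{\ast,\ast})^{\ast})_{i}\to H^{-1}(\tilde P^{\ast}/\langle\Theta\rangle)_{i}\to 0,
\]
i.e.\ an isomorphism $H^{d}(\mathrm{Tot}(C^{\ast,\ast}))_{i}\simeq H^{-1}(\tilde P^{\ast}/\langle\Theta\rangle)_{i}$. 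But $H^{-1}(\tilde P^{\ast}/\langle\Theta\rangle)_{i}$ is exactly the kernel whose vanishing is the \emph{conclusion} of the modified partition theorem; the present lemma is the input that makes that conclusion go through. So your argument, corrected, shows the lemma is \emph{equivalent} to the theorem rather than proving either one --- it is circular. The paper sidesteps this by not passing through the augmented row at all: it applies the mapping cone of $\mathfrak{U}^{d}:\mathrm{Tot}(C^{\ast,\ast\le d})^{\ast}\to C^{\ast-d,\,d+1}$ and shows the connecting map is injective in low degree (imitating \Cref{lem:exactness}, crucially using that the cycle in question has no component in row $d+1$, so only columns exact beneath the top row are needed). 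That yields a surjection of $H^{-1}(C^{\ast,d+1})_{i}=0$ onto $H^{d}(\mathrm{Tot}(C^{\ast,\ast}))_{i}$, with no reference to $\tilde P^{\ast}/\langle\Theta\rangle$. You need an argument of that second kind --- one that stays inside $C^{\ast,\ast}$ --- before the augmentation step can be used to conclude the theorem.
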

\begin{proof}
	Let $i\le\frac{d}{2}$. Consider the short exact sequence associated
	with the mapping cone of $\mathfrak{U}^{d}:\mathrm{Tot}(C^{\ast,\ast\le d})^\ast\rightarrow C^{\ast-d,d+1}$:
	it is
	\[
	\ldots\rightarrow H^{j}(C^{\ast-d-1,d+1})\rightarrow H^{j}(\mathrm{Tot}(C^{\ast,\ast})^\ast)\rightarrow H^{j}(\mathrm{Tot}(C^{\ast,\ast\le d})^\ast)\overset{\partial}{\rightarrow}H^{j+1}(C^{\ast-d,d+1})\rightarrow\ldots,
	\]
	where the connecting homomorphism $\partial$ is induced by the map
	\[
	\alpha=\sum_{\substack{r+s=j,\\
			s\le d
		}
	}\alpha^{r,s}\mapsto d^{v}(\alpha^{j-d,d}).
	\]
	Suppose $\alpha\in H^{j}(\mathrm{Tot}(C^{\ast,\ast\le d})^\ast)_{i}$
	has $\partial\alpha=0$. Then $\alpha$ is in the image of $H^{j}(\mathrm{Tot}(C^{\ast,\ast})^\ast)_{i}$
	in an obvious way: since $d^{v}(\alpha^{j-d,d})=0$, the
	sum 
	\[
	\sum_{\substack{r+s=j,\\
			s\le d
		}
	}\alpha^{r,s}
	\]
	is already a cycle of $\mathrm{Tot}(C^{\ast,\ast})_{i}^\ast$,
	so it represents an element of $H^{j}(\mathrm{Tot}(C^{\ast,\ast})^\ast)_{i}$.
	However, this $\alpha$ has no summand in the top row, and all columns
	of $C^{\ast,\ast}$ are exact beneath the top row in degree $i$. From \Cref{lem:exactness}
	it follows $\alpha$ is zero in $H^{j}(\mathrm{Tot}(C^{\ast,\ast})^\ast)_{i}$,
	hence its image in $H^{j}(\mathrm{Tot}(C^{\ast,\ast\le d})^\ast)_{i}$
	is zero also.
	
	We can conclude $\partial$ is injective in degrees $i\le\frac{d}{2}$.
	Take a piece around $H^{d}(\mathrm{Tot}(C^{\ast,\ast})^\ast)_{i}$
	of the exact sequence from above:
	\[
	\ldots\rightarrow H^{d}(C^{\ast-d-1,d+1})_{i}\overset{f_{1}}{\rightarrow}H^{d}(\mathrm{Tot}(C^{\ast,\ast})^\ast)_{i}\overset{f_{2}}{\rightarrow}H^{d}(\mathrm{Tot}(C^{\ast,\ast\le d})^\ast)_{i}\overset{\partial}{\rightarrow}H^{d+1}(C^{\ast-d,d+1})_{i}\rightarrow\ldots
	\]
	Since $\partial$ is injective, $f_{2}=0$ and $f_{1}$ is surjective.
	Thus $H^{d}(C^{\ast-d-1,d+1})_{i}=H^{-1}(C^{\ast,d+1})_{i}=0$
	surjects onto $H^{d}(\mathrm{Tot}(C^{\ast,\ast})^\ast)_{i}$.
\end{proof}
Together with these observations, directly following the proof of
the partition of unity theorem gives the result.

\subsection{Proof of the subdivision theorem}
\begin{lem}
	For each $\sigma\in\Sigma^{(d)}$, $\K[\sigma]$ has the
	generic almost-Lefschetz property.
\end{lem}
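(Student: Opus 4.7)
The plan is to implement the ``inductive principle'' sketched earlier: deduce the almost-Lefschetz property for the subdivided $d$-ball $\sigma$ from the generic Hard Lefschetz property for $(d-1)$-dimensional spheres (the main result of \cite{AHL}), using the interior partition of unity (\Cref{thm:interior_partition_of_unity}) as the injectivity input that reduces matters to the stars of interior vertices.

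First I would fix a generic linear system of parameters $\Theta$ in $\K[\sigma]_1$ and a generic $\ell \in \K[\sigma]_1$, and write $A(\sigma)=\K[\sigma]/\langle\Theta\rangle$ (and analogously $A(\mathrm{st}_v\sigma)$, $A(\mathrm{lk}_v\sigma)$). Since $\sigma$ is a top-dimensional face of $\Sigma$, the global inducedness hypothesis of \Cref{thm:subdivisions_almost_lefschetz} specializes to say that the subdivision of $\partial\sigma$ is an induced subcomplex of the subdivision of $\sigma$. Thus \Cref{thm:interior_partition_of_unity} applies and yields an injection
\[A_i(\sigma)\ \hookrightarrow\ \bigoplus_{v\in\sigma^\circ\cap\Delta^{(0)}} A_i(\mathrm{st}_v\sigma)\]
for every $i\le d$, where $v$ ranges over the interior vertices of $\sigma$.

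Next I would identify $A(\mathrm{st}_v\sigma)$ with an Artinian reduction of the face ring of the link for each interior $v$. Because $\mathrm{st}_v\sigma = v\ast\mathrm{lk}_v\sigma$, we have $\K[\mathrm{st}_v\sigma]=\K[\mathrm{lk}_v\sigma][x_v]$; generically one $\theta_j\in\Theta$ has nonzero $x_v$-coefficient, so modding it out eliminates $x_v$ and leaves $\K[\mathrm{lk}_v\sigma]$ modulo $d$ generic degree-$1$ forms, an Artinian reduction $A(\mathrm{lk}_v\sigma)$. Since $\mathrm{lk}_v\sigma$ is a simplicial $(d-1)$-sphere, the Hard Lefschetz theorem of \cite{AHL} tells us that $\ell^{d-2i}\colon A_i(\mathrm{lk}_v\sigma)\to A_{d-i}(\mathrm{lk}_v\sigma)$ is injective for $i\le d/2$; factoring $\ell^{d-2i}=\ell\cdot\ell^{d-2i-1}$ then shows that $\ell^{d-2i-1}\colon A_i(\mathrm{st}_v\sigma)\to A_{d-1-i}(\mathrm{st}_v\sigma)$ is also injective.

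To conclude, I would combine both steps in the commutative square
\[
\begin{tikzcd}
A_i(\sigma) \arrow{r}{\ \cdot\,\ell^{d-2i-1}\ } \arrow[hook]{d}{} & A_{d-1-i}(\sigma) \arrow{d}{} \\
\bigoplus_{v} A_i(\mathrm{st}_v\sigma) \arrow[hook]{r}{\ \cdot\,\ell^{d-2i-1}\ } & \bigoplus_{v} A_{d-1-i}(\mathrm{st}_v\sigma)
\end{tikzcd}
\]
and do the two-line chase: if $\alpha\in A_i(\sigma)$ has $\ell^{d-2i-1}\alpha=0$, then its image in the bottom-right corner is zero, so by injectivity of the bottom arrow the image of $\alpha$ in the bottom-left is zero, and then injectivity of the left arrow forces $\alpha=0$. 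The point requiring the most care is the genericity bookkeeping: we need $\Theta$ and $\ell$ chosen simultaneously generic for $\K[\sigma]$ \emph{and} for every vertex star $\K[\mathrm{st}_v\sigma]$ so that Hard Lefschetz applies to each link at once. This amounts to a finite intersection of Zariski-dense open conditions, which remains nonempty over the (infinite) ground field $\K$, so the argument goes through.
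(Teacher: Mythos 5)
Your proof follows the paper's argument essentially step for step: fix a generic $\Theta$, invoke \Cref{thm:interior_partition_of_unity} to inject $\K[\sigma]/\langle\Theta\rangle$ into the sum of Artinian reductions of interior vertex stars, pass through the cone lemma to the link spheres, apply the Hard Lefschetz theorem of \cite{AHL} there, and close with the same two-line diagram chase. The one small deviation is that you factor $\ell^{d-2i}=\ell\cdot\ell^{d-2i-1}$ and conclude injectivity only of $\ell^{d-2i-1}\colon A_i(\sigma)\to A_{d-1-i}(\sigma)$; the diagram chase works just as well with $\ell^{d-2i}$ directly, which gives the full generic almost-Lefschetz property as defined (the weaker power you establish is what the downstream subdivision theorem actually consumes, so the loss is harmless, but the lemma as stated claims slightly more).
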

\begin{proof}
	Pick a generic system of parameters $\Theta=(\theta_{1},\ldots,\theta_{d+1})$
	for $\sigma$. The map 
	\[
	\K[\sigma]/\langle \Theta\rangle \rightarrow\bigoplus_{v\in\sigma^{(0)},v\in\sigma^{\circ}}\K[\mathrm{st}_{v}\sigma]/\langle \Theta\rangle 
	\]
	is injective in each degree $\le d$ by \Cref{thm:interior_partition_of_unity}
	(injectivity up to degree $\frac{d}{2}$ suffices). For each interior
	vertex $v$, the cone lemma gives
	\[
	\K[\mathrm{st}_{v}\sigma]/\langle \Theta\rangle \simeq\K[\mathrm{lk}_{v}\sigma]/\langle \Theta^{\prime}\rangle ,
	\]
	a sphere of dimension $d-1$ with the generic Lefschetz property,
	($\Theta^{\prime}$ is a system of $d$ parameters depending on $\Theta$:
	see the cone lemma). 
	
	Choosing a generic $\ell\in\K[\sigma]/\langle \Theta\rangle $,
	the map $\ell^{d-2j}:(\K[\mathrm{st}_{v}\sigma]/\langle \Theta\rangle )_{j}\rightarrow(\K[\mathrm{st}_{v}\sigma]/\langle \Theta\rangle )_{d-j}$
	is therefore injective for each $j\le\frac{d}{2}$. For such $j$,
	the injectivity of the bottom horizontal map in the commutative diagram
	\[
	\xymatrix{(\K[\sigma]/\langle \Theta\rangle )_{j}\ar[r]^{\ell^{d-2j}\cdot}\ar[d] & (\K[\sigma]/\langle \Theta\rangle )_{d-2j}\ar[d]\\
		{\displaystyle \bigoplus_{v\in\sigma^{(0)},v\in\sigma^{\circ}}}(\K[\mathrm{st}_{v}\sigma]/\langle \Theta\rangle )_{j}\ar[r]^{\ell^{d-2j}\cdot} & {\displaystyle \bigoplus_{v\in\sigma^{(0)},v\in\sigma^{\circ}}}(\K[\mathrm{st}_{v}\sigma]/\langle \Theta\rangle )_{d-2j}
	}
	\]
	implies multiplication $\ell^{d-2j}$ is injective on $(\K[\sigma]/\langle \Theta\rangle )_{j}$,
	as required.
\end{proof}
\begin{proof}
	[Proof of the subdivision theorem] This is completely analogous to
	the previous lemma. The map $\K[\Delta]/\langle \Theta\rangle \rightarrow\bigoplus_{\tau\in\Sigma^{(d)}}\K[\tau]/\langle \Theta\rangle $
	is injective up to degree $\frac{d}{2}$. Each summand $\K[\tau]$
	has the generic almost Lefschetz property, and for generic $\ell\in\K[\sigma]/\langle \Theta\rangle $
	the bottom horizontal map in the commutative diagram
	\[
	\xymatrix{(\K[\Delta]/\langle \Theta\rangle )_{j}\ar[r]^{\ell^{d-2j}\cdot}\ar[d] & (\K[\Delta]/\langle \Theta\rangle )_{d-2j}\ar[d]\\
		{\displaystyle \bigoplus_{\tau\in\Sigma^{(d)}}}(\K[\tau]/\langle \Theta\rangle )_{j}\ar[r]^{\ell^{d-2j}\cdot} & {\displaystyle \bigoplus_{\tau\in\Sigma^{(d)}}}(\K[\tau]/\langle \Theta\rangle )_{d-2j}
	}
	\]
	yields the result.
\end{proof}

	{\small
		\bibliographystyle{myamsalpha}
		\bibliography{ref}}

\end{document}